\definecolor{deepjunglegreen}{rgb}{0.0, 0.29, 0.29}
\definecolor{darkspringgreen}{rgb}{0.09, 0.45, 0.27}
\pretocmd\section{\Needspace*{4\baselineskip}}{}{}
\newtheorem{thm}{Theorem}[subsection]
\newtheorem{cor}[thm]{Corollary}
\newtheorem{lem}[thm]{Lemma}
\newtheorem{prop}[thm]{Proposition}
\newtheorem{conj}[thm]{Conjecture}
\theoremstyle{definition}
\theoremstyle{remark}
\newtheorem{rem}[thm]{Remark}
\newcommand{\nc}{\newcommand}
\nc{\renc}{\renewcommand} \nc{\ssec}{\subsection}
\nc{\sssec}{\subsubsection}
\nc{\on}{\operatorname} \nc{\wh}{\widehat}
\nc\ol{\overline} \nc\ul{\underline} \nc\wt{\widetilde}
\newcommand{\red}[1]{{\color{red}#1}}
\newcommand{\Nt}{{\widetilde{\mathcal N}}}
\newcommand{\gt}{{\widetilde{\Lg}}}
\newcommand{\F}{{\mathcal F}}
\renewcommand{\P}{{\mathcal P}}
\newcommand{\N}{{\mathcal N}}
\renewcommand{\O}{{\mathcal O}}
\newcommand{\Ce}{{\mathbb C}}
\newcommand{\Zet}{{\mathbb Z}}
\newcommand{\Gm}{{\mathbb G}_m}
\newcommand{\oplusl}{\bigoplus\limits}
\newcommand{\suml}{\sum\limits}
\newcommand{\bs}{\backslash}
\newcommand{\LG}{{G^\vee}}
\newcommand{\Lg}{{\mathfrak g}^\svee}
\newcommand{\Ap}{\bar{A}}
\newcommand{\Hop}{\on{Ho}(\bA_V\modu_{\on{perv}}^{\on{fr}})}
\newcommand{\sH}{{\sf H}}
\newcommand{\St}{{\mathbf{St}}}
\newcommand\alp{\alpha}		
		\newcommand\Gam{\Gamma}
\newcommand\del{\delta}		
\newcommand\eps{\varepsilon}
\newcommand\lam{\lambda}		\newcommand\Lam{\Lambda}
\newcommand\sig{\sigma}
\newcommand\calF{{\mathcal{F}}}
\newcommand\calG{{\mathcal{G}}}
\newcommand\calH{{\mathcal{H}}}
\newcommand\calK{{\mathcal{K}}}
\newcommand\calO{{\mathcal{O}}}
\newcommand\calS{{\mathcal{S}}}
\newcommand\bfq{{\mathbf q}}		
		\newcommand\bfS{{\mathbf S}}
\newcommand\QQ{\mathbb{Q}}
\newcommand\WW{\mathbb{W}}
\newcommand\EE{\mathbb{E}}
\newcommand\II{\mathbb{I}}
\newcommand\PP{\mathbb{P}}
\renewcommand\AA{\mathbb{A}}
\newcommand\FF{\mathbb{F}}
\newcommand\GG{\mathbb{G}}
\newcommand\KK{\mathbb{K}}
\newcommand\ZZ{\mathbb{Z}}
\newcommand\CC{\mathbb{C}}
	\newcommand\grb{{\mathfrak{b}}}
	\newcommand\grg{{\mathfrak{g}}}
\newcommand\sdp{\times \hskip -0.3em {\raise 0.3ex
\hbox{$\scriptscriptstyle |$}}} 
\newcommand\codim{\operatorname{codim}}
\newcommand\PGL{{\rm PGL}}
\newcommand\Perv{\operatorname{Perv}}
\newcommand\sgn{\operatorname{sgn}}
\newcommand\spec{{\rm spec}}
\newcommand\oG{{\overline{G}}}
\newcommand\oX{{\overline{X}}}
\newcommand\hatZ{{\widehat{Z}}}
\newcommand\x{\times}
\newcommand\qlb{{\overline \QQ}_l}
\nc\LGr{{\mathbf LG}_{\rho}}
\nc{\BA}{{\mathbb{A}}} \nc{\BC}{{\mathbb{C}}} \nc{\BF}{{\mathbb{F}}}
\nc{\BD}{{\mathbb{D}}} \nc{\BG}{{\mathbb{G}}} \nc{\BQ}{{\mathbb{Q}}}
\nc{\BM}{{\mathbb{M}}} \nc{\BN}{{\mathbb{N}}} \nc{\BO}{{\mathbb{O}}}
\nc{\BP}{{\mathbb{P}}} \nc{\BR}{{\mathbb{R}}}
\nc{\BZ}{{\mathbb{Z}}} \nc{\BS}{{\mathbb{S}}} \nc{\BW}{{\mathbb{W}}}
\nc{\CA}{{\mathcal{A}}} \nc{\CB}{{\mathcal{B}}} \nc{\CalC}{{\mathcal{C}}} \nc{\CalD}{{\mathcal{D}}}
\nc{\CE}{{\mathcal{E}}} \nc{\CF}{{\mathcal{F}}}
\nc{\CG}{{\mathcal{G}}} \nc{\CH}{{\mathcal{H}}}
\nc{\CI}{{\mathcal{I}}} \nc{\CK}{{\mathcal{K}}} \nc{\CL}{{\mathcal{L}}}
\nc{\CM}{{\mathcal{M}}} \nc{\CN}{{\mathcal{N}}}
\nc{\CO}{{\mathcal{O}}} \nc{\CP}{{\mathcal{P}}}
\nc{\CQ}{{\mathcal{Q}}} \nc{\CR}{{\mathcal{R}}}
\nc{\CS}{{\mathcal{S}}} \nc{\CT}{{\mathcal{T}}}
\nc{\CU}{{\mathcal{U}}} \nc{\CV}{{\mathcal{V}}}  \nc{\CY}{{\mathcal Y}}
\nc{\CW}{{\mathcal{W}}} \nc{\CZ}{{\mathcal{Z}}}
\nc{\cM}{{\check{\mathcal M}}{}} \nc{\csM}{{\check{\mathcal A}}{}}
\nc{\obM}{{\overset{\circ}{\mathbf M}}{}}
\nc{\oCA}{{\overset{\circ}{\mathcal A}}{}}
\nc{\obA}{{\overset{\circ}{\mathbf A}}{}}
\nc{\ooM}{{\overset{\circ}{M}}{}}
\nc{\osM}{{\overset{\circ}{\mathsf M}}{}}
\nc{\vM}{{\overset{\bullet}{\mathcal M}}{}}
\nc{\nM}{{\underset{\bullet}{\mathcal M}}{}}
\nc{\obD}{{\overset{\circ}{\mathbf D}}{}}
\nc{\cp}{{\overset{\circ}{\mathbf p}}{}}
\nc{\ofZ}{{\overset{\circ}{\mathfrak Z}}{}}
\nc{\ff}{{\mathfrak{f}}} \nc{\fv}{{\mathfrak{v}}}
\nc{\fa}{{\mathfrak{a}}} \nc{\fb}{{\mathfrak{b}}}
\nc{\fd}{{\mathfrak{d}}} \nc{\fe}{{\mathfrak{e}}}
\nc{\fg}{{\mathfrak{g}}} \nc{\fgl}{{\mathfrak{gl}}}
\nc{\fh}{{\mathfrak{h}}} \nc{\fri}{{\mathfrak{i}}}
\nc{\fj}{{\mathfrak{j}}} \nc{\fk}{{\mathfrak{k}}} \nc{\fl}{{\mathfrak{l}}}
\nc{\fm}{{\mathfrak{m}}} \nc{\fn}{{\mathfrak{n}}}
\nc{\ft}{{\mathfrak{t}}} \nc{\fu}{{\mathfrak{u}}}
\nc{\fw}{{\mathfrak{w}}} \nc{\fz}{{\mathfrak{z}}}
\nc{\fp}{{\mathfrak{p}}} \nc{\fq}{{\mathfrak{q}}} \nc{\frr}{{\mathfrak{r}}}
\nc{\fs}{{\mathfrak{s}}} \nc{\fsl}{{\mathfrak{sl}}}
\nc{\fso}{{\mathfrak{so}}} \nc{\fsp}{{\mathfrak{sp}}} \nc{\osp}{{\mathfrak{osp}}}
\nc{\hsl}{{\widehat{\mathfrak{sl}}}}
\nc{\hgl}{{\widehat{\mathfrak{gl}}}}
\nc{\hg}{{\widehat{\mathfrak{g}}}}
\nc{\chg}{{\widehat{\mathfrak{g}}}{}^\vee}
\nc{\hn}{{\widehat{\mathfrak{n}}}}
\nc{\chn}{{\widehat{\mathfrak{n}}}{}^\vee}
\nc{\fA}{{\mathfrak{A}}} \nc{\fB}{{\mathfrak{B}}} \nc{\fC}{{\mathfrak{C}}}
\nc{\fD}{{\mathfrak{D}}} \nc{\fE}{{\mathfrak{E}}}
\nc{\fF}{{\mathfrak{F}}} \nc{\fG}{{\mathfrak{G}}} \nc{\fH}{{\mathfrak{H}}}
\nc{\fI}{{\mathfrak{I}}} \nc{\fJ}{{\mathfrak{J}}}
\nc{\fK}{{\mathfrak{K}}} \nc{\fL}{{\mathfrak{L}}}
\nc{\fM}{{\mathfrak{M}}} \nc{\fN}{{\mathfrak{N}}}
\nc{\frP}{{\mathfrak{P}}} \nc{\fQ}{{\mathfrak{Q}}}
\nc{\fS}{{\mathfrak{S}}} \nc{\fT}{{\mathfrak{T}}} \nc{\fU}{{\mathfrak{U}}}
\nc{\fV}{{\mathfrak{V}}} \nc{\fW}{{\mathfrak{W}}}
\nc{\fX}{{\mathfrak{X}}} \nc{\fY}{{\mathfrak{Y}}}
\nc{\fZ}{{\mathfrak{Z}}}
\nc{\ba}{{\mathbf{a}}}
\nc{\bb}{{\mathbf{b}}} \nc{\bc}{{\mathbf{c}}} \nc{\be}{{\mathbf{e}}}
\nc{\bg}{{\mathbf{g}}} \nc{\bj}{{\mathbf{j}}} \nc{\bm}{{\mathbf{m}}}
\nc{\bn}{{\mathbf{n}}} \nc{\bp}{{\mathbf{p}}}
\nc{\bq}{{\mathbf{q}}} \nc{\br}{{\mathbf{r}}} \nc{\bt}{{\mathbf{t}}}
\nc{\bv}{{\mathbf{v}}}
\nc{\bx}{{\mathbf{x}}} \nc{\by}{{\mathbf{y}}} \nc{\bz}{{\mathbf{z}}}
\nc{\bw}{{\mathbf{w}}} \nc{\bA}{{\mathbf{A}}}
\nc{\bB}{{\mathbf{B}}} \nc{\bC}{{\mathbf{C}}}
\nc{\bD}{{\mathbf{D}}} \nc{\bF}{{\mathbf{F}}} \nc{\bG}{{\mathbf{G}}}
\nc{\bH}{{\mathbf{H}}} \nc{\bI}{{\mathbf{I}}} \nc{\bJ}{{\mathbf{J}}}
\nc{\bK}{{\mathbf{K}}} \nc{\bM}{{\mathbf{M}}} \nc{\bN}{{\mathbf{N}}}
\nc{\bO}{{\mathbf{O}}} \nc{\bS}{{\mathbf{S}}} \nc{\bT}{{\mathbf{T}}}
\nc{\bU}{{\mathbf{U}}} \nc{\bV}{{\mathbf{V}}} \nc{\bW}{{\mathbf{W}}}
\nc{\bX}{{\mathbf{X}}}
\nc{\bY}{{\mathbf{Y}}} \nc{\bP}{{\mathbf{P}}}
\nc{\bZ}{{\mathbf{Z}}} \nc{\bh}{{\mathbf{h}}}
\nc{\sA}{{\mathsf{A}}} \nc{\sfb}{{\mathsf{b}}} \nc{\sB}{{\mathsf{B}}}
\nc{\sC}{{\mathsf{C}}} \nc{\sD}{{\mathsf{D}}}
\nc{\sE}{{\mathsf{E}}} \nc{\sF}{{\mathsf{F}}} \nc{\sG}{{\mathsf{G}}}
\nc{\sI}{{\mathsf{I}}} \nc{\sK}{{\mathsf{K}}} \nc{\sk}{{\mathsf{k}}} \nc{\sL}{{\mathsf{L}}}
\nc{\sfm}{{\mathsf{m}}} \nc{\sM}{{\mathsf{M}}} \nc{\sN}{{\mathsf{N}}}
\nc{\sO}{{\mathsf{O}}} \nc{\sQ}{{\mathsf{Q}}} \nc{\sP}{{\mathsf{P}}}
\nc{\sT}{{\mathsf{T}}}
\nc{\sV}{{\mathsf{V}}} \nc{\sW}{{\mathsf{W}}}
\nc{\sfp}{{\mathsf{p}}} \nc{\sq}{{\mathsf{q}}} \nc{\sr}{{\mathsf{r}}}
\nc{\sfs}{{\mathsf{s}}} \nc{\sfS}{{\mathsf{S}}} \nc{\st}{{\mathsf{t}}}
\nc{\sfc}{{\mathsf{c}}} \nc{\sd}{{\mathsf{d}}} \nc{\sx}{{\mathsf{x}}}
\nc{\sy}{{\mathsf{y}}} \nc{\sz}{{\mathsf{z}}} \nc{\sZ}{{\mathsf{Z}}}
\nc{\tA}{{\widetilde{\mathbf{A}}}}
\nc{\tB}{{\widetilde{\mathcal{B}}}}
\nc{\tg}{{\widetilde{\mathfrak{g}}}} \nc{\tG}{{\widetilde{G}}}
\nc{\TM}{{\widetilde{\mathbb{M}}}{}}
\nc{\tO}{{\widetilde{\mathsf{O}}}{}}
\nc{\tU}{{\widetilde{\mathfrak{U}}}{}} \nc{\TZ}{{\tilde{Z}}}
\nc{\tx}{{\tilde{x}}} \nc{\tbv}{{\tilde{\bv}}}
\nc{\tfP}{{\widetilde{\mathfrak{P}}}{}} \nc{\tz}{{\tilde{\zeta}}}
\nc{\tmu}{{\tilde{\mu}}}
\nc{\urho}{\underline{\rho}} \nc{\uB}{\underline{B}}
\nc{\uC}{{\underline{\mathbb{C}}}} \nc{\ui}{\underline{i}}
\nc{\ofP}{{\overline{\mathfrak{P}}}}
\nc{\hrho}{{\hat{\rho}}}
\nc{\blambda}{{\boldsymbol{\lambda}}} \nc{\bmu}{{\boldsymbol{\mu}}} \nc{\bnu}{{\boldsymbol{\nu}}}
\nc{\btheta}{{\boldsymbol{\theta}}} \nc{\bzeta}{{\boldsymbol{\zeta}}} \nc{\bta}{{\boldsymbol{\eta}}}
\nc{\one}{{\mathbf{1}}} \nc{\two}{{\mathbf{t}}}
\nc{\Sym}{\mathop{\operatorname{\rm Sym}}}
\nc{\Tot}{{\mathop{\operatorname{\rm Tot}}}}
\nc{\Spec}{\mathop{\operatorname{\rm Spec}}}
\nc{\Ker}{{\mathop{\operatorname{\rm Ker}}}}
\nc{\Isom}{{\mathop{\operatorname{\rm Isom}}}}
\nc{\Hilb}{{\mathop{\operatorname{\rm Hilb}}}}
\nc{\deeq}{{\mathop{\operatorname{\rm deeq}}}}
\nc{\End}{{\mathop{\operatorname{\rm End}}}}
\nc{\Ext}{{\mathop{\operatorname{\rm Ext}}}}
\nc{\Hom}{{\mathop{\operatorname{\rm Hom}}}}
\nc{\CHom}{{\mathop{\operatorname{{\mathcal{H}}\it om}}}}
\nc{\GL}{{\mathop{\operatorname{\rm GL}}}}
\nc{\SL}{{\mathop{\operatorname{\rm SL}}}}
\nc{\SO}{{\mathop{\operatorname{\rm SO}}}}
\nc{\Sp}{{\mathop{\operatorname{\rm Sp}}}}
\nc{\OSp}{{\mathop{\operatorname{\rm SOSp}}}}
\nc{\gr}{{\mathop{\operatorname{\rm gr}}}}
\nc{\Id}{{\mathop{\operatorname{\rm Id}}}}
\nc{\perf}{{\mathop{\operatorname{\rm perf}}}}
\nc{\defi}{{\mathop{\operatorname{\rm def}}}}
\nc{\length}{{\mathop{\operatorname{\rm length}}}}
\nc{\supp}{{\mathop{\operatorname{\rm supp}}}}
\nc{\HC}{{\mathcal H}{\mathcal C}}
\nc{\pr}{{\operatorname{pr}}}
\nc{\Cliff}{{\mathsf{Cliff}}}
\nc{\loc}{{\operatorname{loc}}}
\nc{\Fl}{{\mathcal{F}\ell}} \nc{\Ffl}{{\mathbf{Fl}}}
\nc{\Fib}{{\mathsf{Fib}}}
\nc{\Coh}{{\operatorname{Coh}}} \nc{\FCoh}{{\mathsf{FCoh}}}
\nc{\Perf}{{\mathsf{Perf}}}
\nc{\wtimes}{\mathbin{\widetilde\times}}
\nc{\reg}{{\text{\rm reg}}}
\nc{\self}{{\text{\rm self}}}
\nc{\gvee}{{\mathfrak g}^{\!\scriptscriptstyle\vee}}
\nc{\tvee}{{\mathfrak t}^{\!\scriptscriptstyle\vee}}
\nc{\nvee}{{\mathfrak n}^{\!\scriptscriptstyle\vee}}
\nc{\bvee}{{\mathfrak b}^{\!\scriptscriptstyle\vee}}
       \nc{\rhovee}{\rho^{\!\scriptscriptstyle\vee}}
\nc{\cplus}{{\mathbf{C}_+}} \nc{\cminus}{{\mathbf{C}_-}}
\nc{\cthree}{{\mathbf{C}_*}} \nc{\Qbar}{{\bar{Q}}}
\newcommand\iso{\mathbin{\vphantom{j^{X^2}}\smash{\overset{\sim}{\vphantom{\rule{0pt}{0.20em}}\smash{\longrightarrow}}}}}
\nc{\Gtimes}{\vphantom{j^{X^2}}\smash{\overset{G}{\vphantom{\rule{0pt}{0.30em}}\smash{\times}}}}
\nc{\sGtimes}{\vphantom{j^{X^2}}\smash{\overset{\mathsf G}{\vphantom{\rule{0pt}{0.30em}}\smash{\times}}}}
\nc{\svee}{{\!\scriptscriptstyle\vee}}
\nc{\bOmega}{{\overline{\Omega}}}
\nc{\seq}[1]{\stackrel{#1}{\sim}}
\nc{\aff}{{\operatorname{aff}}}
\nc{\can}{{\operatorname{can}}}
\nc{\var}{{\operatorname{var}}}
\nc{\fin}{{\operatorname{fin}}}
\nc{\mir}{{\operatorname{mir}}}
\nc{\triv}{{\operatorname{triv}}}
\nc{\ext}{{\operatorname{ext}}}
\nc{\righ}{{\operatorname{right}}}
\nc{\lef}{{\operatorname{left}}}
\nc{\forg}{{\operatorname{forg}}}
\nc{\fid}{{\operatorname{fd}}}
\nc{\Fr}{{\operatorname{Fr}}}
\nc{\odd}{{\operatorname{odd}}}
\nc{\even}{{\operatorname{even}}}
\nc{\modu}{{\operatorname{-mod}}}
\nc{\Gr}{{\mathbf{Gr}}}
\nc{\FT}{{\operatorname{FT}}}
\nc{\Mat}{{\operatorname{Mat}}}
\nc{\MSt}{{\operatorname{MSt}}}
\nc{\sph}{{\operatorname{sph}}}
\nc{\GR}{{\mathbf{Gr}}}
\nc{\Rep}{{\operatorname{Rep}}}
\nc{\IC}{{\operatorname{IC}}}
\nc{\Bun}{{\operatorname{Bun}}}
\nc{\Stab}{{\operatorname{Stab}}}
\nc{\pt}{{\operatorname{pt}}}
\nc{\bfmu}{{\boldsymbol{\mu}}}
\nc{\bfomega}{{\boldsymbol{\omega}}}
\nc{\dslash}{/\!\!/}
\nc{\tslash}{/\!\!/\!\!/}
\nc\QCoh{\operatorname{QCoh}}
\nc\IndCoh{\operatorname{IndCoh}}
\nc\Maps{\operatorname{Maps}}
\nc\Dmod{D-\operatorname{mod}}
\newcommand\Hecke{\operatorname{Hecke}}
\renewcommand{\AA}{\mathbb A}
\nc\Av{\operatorname{Av}}
\nc\LS{\operatorname{LocSys}}
\newcommand*\circled[1]
\newcommand{\raisemath}[1]{\mathpalette{\raisem@th{#1}}}
\newcommand{\raisem@th}[3]{\raisebox{#1}{$#2#3$}}
\nc{\binlim}[2][]{\def\@tempa{#1}\@ifnextchar^{\@binlim{#2}}{\@binlim{#2}^{}}}
\def\@binlim#1^#2{\mathbin{\@ifempty{#2}{\mathop{#1}}{\mathop{#1}\@xp\displaylimits\@tempa^{#2}}}}
\nc\cX{{\mathcal X}}
\renc\Hecke{\mathit{\CH\kern-.2ex ecke}}
\nc\bGO{{\bG_\bO}}
\nc\opp{{\on{op}}}
\nc\tbx{\binlim{\widetilde\boxtimes{}}}
\nc\phitau{\varphi\tau}
\newenvironment{i-ii-iii}{%
\begin{enumerate}
}%
{\end{enumerate}}
\nc\ceil[1]{\lceil#1\rceil}  \nc\floor[1]{\lfloor#1\rfloor}
\nc\Lie{\on{Lie}}
\nc\sS{{\mathsf S}}
\nc\vvv{\ensuremath{\red\surd}}
\def\arxiv#1{\href{http://arxiv.org/abs/#1}{\tt arXiv:#1}} 
\nc\sTr{\operatorname{sTr}}
\nc\Whit{\operatorname{Whit}}
\nc\KL{\operatorname{KL}}
\renewcommand{\subsection}{\@startsection{subsection}{2}{0pt}{-3ex
plus -1ex minus -0.2ex}{-2mm plus -0pt minus
-2pt}{\normalfont\bfseries}} \makeatother
\numberwithin{equation}{subsection}
\nc\mto{\mapsto }
\nc\en{\enspace }
\nc{\XF}{{\mathfrak F}}
\begin{document}

\title{A fusion construction of  local $L$-factors}
\author[R.~Bezrukavnikov]{Roman Bezrukavnikov}
\address{Department of Mathematics, Massachusetts Institute of Technology,
  Cambridge, MA, 02139, USA}
\email{bezrukav@math.mit.edu}

\author[A.~Braverman]{Alexander Braverman}
\address{Department of Mathematics, University of Toronto and Perimeter Institute
of Theoretical Physics, Waterloo, Ontario, Canada, N2L 2Y5}
\email{braval@math.toronto.edu}

\author[M.~Finkelberg]{Michael Finkelberg}
\address{Department of Mathematics\\
National Research University Higher School of Economics\\
Russian Federation, Usacheva st.\ 6, 119048, Moscow\\
\newline Skolkovo Institute of Science and Technology\\
\newline Institute for Information Transmission Problems of RAS}
\email{fnklberg@gmail.com}

\author[D.~Kazhdan]{David Kazhdan}
\address{Institute of Mathematics\\
The Hebrew University of Jerusalem\\
Givat-Ram, Jerusalem,  91904\\
Israel}
\email{kazhdan@math.huji.ac.il}

\dedicatory{To the memory of Yuri Ivanovich Manin}

\maketitle

\begin{abstract} We propose a new conjectural way to calculate the local $L$-factor
$L=L_\chi(\pi,\rho,s)$ where $\pi$ is a representation of a $p$-adic group $G$, $\rho$ is
an algebraic representation of the dual group $\LG$ and $\chi$ is an algebraic character
of $G$ satisfying a positivity condition, and prove its validity in some cases.

A method  going back to Godement and Jacquet \cite{gj} yields a description of $L$
using as an input a certain space ${\mathcal S}_\rho$ of functions on $G$ depending on $\rho$.
A (partly conjectural) description  of ${\mathcal S}_\rho$ involving trace of Frobenius
functions associated to perverse sheaves on the loop space of a semigroup containing $G$ was developed
in \cite{BNS}, partly building on \cite{bk1}, \cite{bk2}.
Here we propose a different, more general conjectural
description of ${\mathcal S}_\rho$: it also refers to trace of Frobenius functions but
instead of the loop space of a semi-group we work with the ramified global Grassmannian
fibering over the configuration space of points on a global curve defined by
Beilinson-Drinfeld and Gaitsgory.

Our main result asserts validity of our conjectures
where $\pi$ is generated by an Iwahori fixed vector: we show that in this case it is compatible
with the standard formula for $L$ involving local Langlands correspondence.
 The proof is based on properties
of the coherent realization of the affine Hecke category.
\end{abstract}

\tableofcontents

\section{Introduction}
\subsection{Local $L$-factors}
$L$-functions are central to the theory of automorphic forms, in particular, a large part of Langlands
conjectures is known to follow from conjectural properties of appropriate $L$-functions
(such as analytic continuation and functional equation).
Global $L$-functions
are defined as products of local $L$-factors. These admit a simple description involving the
conjectural local Langlands correspondence; unconditional definition is known in some special cases only.

A local (non-archimedian) $L$-factor is a function of a complex variable $s$, which has the form $P(q^{-s})$, where $P$ is a polynomial whose free term is equal to 1. The (local) Langlands program predicts that
such an $L$-factor can be associated to a choice of

1) A local non-archimedian field $\sF$ with the ring of integers $\sO$.

2) A split connected reductive group $G$ over $\sF$.

3) A non-trivial character $\chi\colon G\to \GG_m$.

4) An algebraic representation $\rho$ of the dual group $G^{\vee}$ such that the corresponding co-character $\chi^\svee$
of the center of $G^{\vee}$ is positive with respect to $\rho$ (i.e. that the restriction of $\rho$ to $\GG_m$ given by
$\chi^\svee$ is a sum of characters of the form $z\mapsto z^i$ with $i>0$).

5) A smooth irreducible representation $\pi$ of $G(\sF)$.

We will denote it by
$L_{\chi}(\pi,\rho,s)$.
\subsection{$L$-functions of Galois representations}
Let $\WW$ denote the Weil-Deligne group of $\sF$ and let $\II$ be the corresponding inertia subgroup. This is a normal
subgroup of $\WW$ such that the quotient $\WW/\II$ is isomorphic to $\ZZ$. We shall denote the generator of this
quotient by $\Fr$ (the Frobenius element).

Let $\sig\colon \WW\to \GL(V)$ be a representation of $\WW$ (here $V$ is a finite-dimensional vector space over $\CC$). Then
the $L$-function of $\sig$ is defined to be
\begin{equation}
  \label{lsigma}
L(\sig,s)=\frac{1}{\det(1-q^{-s}\sig(\Fr)|_{V^{\II}})}.
\end{equation}

For the purposes of this paper we need some variant of the above definition.
Let $\pi,\rho,\chi$ and $G$ be as before.
The local Langlands conjecture (which is now proved in many cases)
is supposed to associate to $\pi$ a homomorphism from $\WW$ to $G^{\vee}$.
Composing it with $\rho$ we get a representation $\sig_{\pi,\rho}\colon \WW\to \GL(V)$. In this case the function $L_{\chi}(\pi,\rho,s)$
is supposed to have an interpretation similar to~\eqref{lsigma}. Namely, we should
have
\begin{equation}
  \label{lrho}
L_{\chi}(\pi,\rho,s)=\frac{1}{\det(1-\chi^\svee(q^{-s})\sig_{\pi,\rho}(\Fr)|_{V^{\II}})}.
\end{equation}
The question that remains is how to define the left-hand side without appealing to the Langlands conjectures.

\subsection{Godement-Jacquet construction}In the special case when $G=\GL(n)$, $\chi=\det$ and
$\rho=\CC^n$ is the tautological representation of $G^{\vee}=\GL(n,\CC)$ a remarkable definition of
$L_{\chi,\rho}(\pi,s)$ has been proposed by Godement and Jacquet~\cite{gj}.
Namely, let $\phi$ be a locally constant function on $\Mat(n,\sF)$ (the space of $n\x n$-matrices
over $\sF$) with compact support and let $c$ be a matrix coefficient of $\pi$.
Let us set
\begin{equation}\label{GJint}
Z(\phi,c,s)=\int_{G(\sF)} \phi(g)c(g)|\det(g)|^s dg.
\end{equation}

Godement and Jacquet show that this integral is absolutely convergent for $Re(s)\ll0$ and in fact defines a rational
function of $q^s$. Moreover, for a given $\pi$ all the functions $Z(\phi,c,s+\frac{n-1}{2})$ form a fractional ideal
of the ring $\CC[q^s,q^{-s}]$ and the function $L(\pi,s)$ is defined to be the generator of this ideal
(normalized by the condition that $L=1$ when $q^{-s}=0$).

The shift by $\frac{n-1}{2}$ is in fact a normalization issue which will automatically (almost) disappear in our general construction.

\subsection{The general case}
The Godement-Jacquet procedure is based on the space $\calS$ of locally constant compactly supported functions on $\Mat(n,\sF)$.
So, for general $G$ and $\rho$ one needs to find its analog $\CS_{\rho}$. It is supposed to be some space of locally constant
functions on $G$ containing all functions with compact support, which depends on $\rho$.

Assume that $G$ is the group of invertible elements of a normal algebraic semi-group $\oG$.
Using the work of Vinberg one can associate to $\oG$ a representation $\rho$ of $G^{\vee}$. It is tempting to define $\calS_{\rho}$ as the space of locally
constant compactly supported functions on $\oG$.
However, in general this idea turns out to be too naive as the semi-group
$\oG$ is usually singular.

Let us now assume that $\sF$ is $\FF_q(\!(t)\!)$.
The 
following idea for definition of $\CS_{\rho}$ was discussed in \cite{BNS}, partly building on \cite{bk1}, \cite{bk2}.
 Let $G_\sF$ denote  denote the
ind-scheme of maps from $\Spec\FF_q(\!(t)\!)$ to $G$ and let $\oG{}^0_\sF$ denote the ind-scheme of maps
from $\Spec\FF_q(\!(t)\!)$
to $\oG$ which land in $G$; note that on the level of $\FF_q$-points $G_\sF$ and $\oG{}^0_\sF$ are the
same, but as ind-schemes they are very different.
Then the idea is that  $\calS_{\rho}$ should be the space generated by the trace of Frobenius
functions associated to irreducible perverse sheaves on the space $\oG{}^0_\sF$.

This idea was realized in some cases in \cite{BNS};  however, there are substantial obstacles for realizing it in general.
First, not every representation $\rho$ can be associated to a semi-group
$\oG$. Second, it is technically challenging to define the above category of perverse sheaves.
Our method instead is based on a {\em global model} for the singularities of $\oG$ which can be defined
for an arbitrary $\rho$.

The precise connection with semi-groups is discussed in the appendix.

\subsection{The fusion construction of the $L$-factors} The global model above
 is defined starting from a global curve $C$ with a marked point $x_0$, as opposed to the formal disc featuring in local constructions.
More precisely, we consider the so
called ramified, symmetrized Beilinson-Drinfeld Grassmannian $\Gr_{n}$; or rather a closed subvariety  $\Gr_{n}^\rho\subset \Gr_{n}$ depending 
on the highest weights of $\rho$. The variety  $\Gr_{n}^\rho$ fibers over the configuration space $C^{(n)}$.

Taking as an input  the image of the  representation $\rho$ under the geometric Satake equivalence and an arbitrary sheaf $\CF$ on the loop group
we define a perverse sheaf $\XF_{\rho,(n)}$ on   an open 
part $\Gr_{x_0,n}^{\on{dis}}\subset \Gr_{n}^\rho$, the preimage of the space of configuration of $n$ distinct points in $C\setminus \{x_0\}$.
Applying minimal extension to such a sheaf and restricting it to the locus where each point in the configuration equals $x_0$ ,
we get an $\ell$-adic complex on a space receiving a map from the loop group $G_{\sF}$.
The space $\calS_\rho(G)$ is then defined as the span of Frobenius trace functions produced from those complexes.

We prove that the $L$-factors produced by the Godement-Jacquet construction from this space 
has expected properties in several cases.
If $G=\GL(n)$ and $\rho$ is the tautological representation we check that our definition coincides with the classical one \cite{gj}.
In the following cases where local Langlands correspondence is well established we check that our definition agrees with \eqref{lrho}: when 
$G$ is abelian, or when  $G$ is arbitrary but the sheaf $\CF$ is equivariant under
the group of positive loops, or more generally when it is equivariant with respect to an Iwahori subgroup.
Of these, the latter case is most involved,
we finish the introduction by describing the key ideas of the proof in that case.

\subsection{The Iwahori case}

The two main ingredients in the proof are: 1) the description of the minimal extension of a perverse sheaf in terms of Beilinson's gluing data \cite{bei} and its generalization 
presented in ~\S\ref{BD}, and 2) the equivalence between the two categorifications of the affine Hecke algebra \cite{BHecke} and its properties.

According to \cite{bei}, given an algebraic variety $X$ with a function $f$ and a perverse sheaf $\CF$ on $U=X\setminus (f)$ we have
$i^!j_{!*}(\CF)[-1]=\on{CoKer}(\Psi_f(\CF)\overset{1-m}{\longrightarrow} \Psi_f(\CF))$ where  $i$, $j$ are the embeddings of $D=(f)$ and $U=X\setminus D$ into $X$ respectively and $\Psi$
is the {\em nearby cycles} functor with its monodromy automorphism $m$. 

In particular, if $f$ is a (local) function on $\Gr_{1}^\rho$ coming from a coordinate function of $C$ defined in a neighborhood of $x_0$, then
 $\Psi_f(\XF_{\rho,(1)})$  yields  convolution of $\CF$ with the {\em central sheaf} $\sZ_\rho$ of \cite{g} (here we assume
 that $\XF$ is constructed from an Iwahori equivariant sheaf $\CF$). Combining this observation with the previous paragraph we identify the restriction 
 of the minimal extension of $\XF_{\rho,(1)}$ as coinvariants of monodromy acting on convolution $\CF*\sZ_\rho$ of $\CF$ with the central sheaf $\sZ_\rho$. The  multi-parametric generalization of \cite{bei} in   ~\S\ref{BD}  allows one to write the restriction of  $\XF_{\rho,(n)}$ as coinvariants of the monodromy automorphism\footnote{Here we are
 referring to  monodromy  acting on one tensor factor in the symmetric power of $\rho$. Notice that this is different from monodromy action coming from the nearby cycles
 construction applied to the representation $\on{Sym}^n(\rho)$.} of $\CF*\sZ_{\on{Sym}^n(\rho)}$.

Next we invoke  a description of the category of Iwahori equivariant
 perverse sheaves on the affine flag variety in terms of coherent sheaves on  the Steinberg variety of the dual group  \cite{BHecke}.  Notice that this equivalence intertwines 
 the central functor $\sZ_V$
 with the functor $\CF\mapsto V\otimes \CF$ of tensoring by a representation of $\LG$. These results allow one to compare the construction in the previous paragraph with
 pull back of a complex of coherent sheaves from $\St$ to $Z_\rho$, see~Theorem \ref{main-intro-exp}(2) for notation. This yields 
  a ``coherent" description of the corresponding subspace stated in Theorem \ref{main-intro-exp}(2) and Theorem \ref{Th1}, which formally implies~\eqref{lrho}.

\subsection{Organization of the paper} The paper is organized as follows. In~\S\ref{formulation}
we give a precise definition of the space $\calS_\rho(G)$ and formulate some conjectures about it.
We prove the conjecture in the standard example of the group $\GL(N)$ and its tautological
representation, and state the main~Theorem~\ref{main-intro} that asserts validity of the conjecture
for Iwahori-equivariant sheaves.
In~\S\ref{toric} we consider the case when $G$ is a torus. In~\S\ref{iwahori} we describe the plan
of our proof of the main~Theorems~\ref{main-intro},~\ref{main-intro-exp}.
This plan is worked out in~\S\ref{BD} (calculations
on the geometric side involving the ramified Beilinson-Drinfeld Grassmannian) and~\S\ref{proofs_RB}
(calculations on the spectral side involving an enhanced Steinberg variety). Finally, in
Appendix~\S\ref{semi-groups} we discuss the relations with arc spaces of reductive semigroups.

\subsection{Acknowledgments}
We are indebted to P.~Achar and S.~Riche for sharing~\cite{ar} with us prior to its publication.
We are especially grateful for their work on~\cite{ar2} supplying the proof
of~Proposition~\ref{nadler}.
We also thank A.~Bouthier, D.~Nadler, B.~C.~Ng\^o and Y.~Sakellaridis
for useful discussions.

We dedicate the article to the memory of Yuri Ivanovich Manin, whose influence on the subject and on the authors can not be overestimated.
We refer, in particular, to \cite{m} for an inspiring vision of the place of local $L$-factors in a rather different mathematical landscape.

R.B.~was supported by NSF grant No.~DMS-1601953.
A.B.~was partially supported by NSERC.
M.F.~was partially supported by the Basic Research Program at the HSE University.
D.K.~was partially supported by ERC grant 669655.

\section{The main conjectures: precise formulation}\label{formulation}

\subsection{The basic functions}\label{basic}
Let $\CF$ be any irreducible perverse sheaf on $G_\sF$ (this is by definition an irreducible
perverse sheaf on the ind-scheme $G_\sF/K_i$ for some congruence subgroup $K_i$).
To $\CF$ we are going to associate a function
$f_{\CF,\rho}$ on $G(\sF)$. More precisely, 
we are going
to define a sequence of complexes of sheaves $\XF_{\rho,(n)},\ n\geq 0$, on $G_\sF$ such that
\begin{equation}\label{XF_def}
f_{\CF,\rho}=\sum_n f_{\XF_{\rho,(n)}},
\end{equation}
where $f_{\XF_{\rho,(n)}}$ is the function corresponding to $\XF_{\rho,(n)}$ by the
``sheaf-function" correspondence,
 i.e.\ $f_{\XF_{\rho,(n)}}(x)$ is defined to be the trace of
 the Frobenius acting on the corresponding stalk of $\XF_{\rho,(n)}$.


Let $C$ be a smooth projective curve over $\FF_q$ with an $\FF_q$-point $x_0\in C$ and with a local parameter $t$ near $x_0$.
Let $\Gr_{x_0,n}$ denote the moduli space of the following data:

1) An {\em unordered} $n$-tuple of points $(x_1,\ldots,x_n)$ of $C$;

2) A $G$-bundle $\CP$ on $C$;

3) A trivialization of $\CP$ away from $(x_0,x_1,\ldots,x_n)$;

4) A trivialization of $\CP$ in the formal neighbourhood of $x_0$.

\noindent
We have a natural projection $p_n\colon \Gr_{x_0,n}\to G_\sF$ and $\pi_n\colon  \Gr_{x_0,n}\to \Sym^n C$.
In addition $G_\sF$ sits in $\Gr_{x_0,n}$ as a closed subset corresponding to the locus when all $x_i$
are equal to $x_0$; we denote the corresponding embedding by $i_n$.
Let also $j_n$ denote the embedding of the open subset $\Gr_{x_0,n}^{\on{dis}}$ into $\Gr_{x_0,n}$ where
$\Gr_{x_0,n}^{\on{dis}}$ is the locus where all $x_i$ are distinct.

Note that every fiber of the map $\pi_n$ restricted to $\Gr_{x_0,n}^{\on{dis}}$ is isomorphic to
$G_\sF\times \Gr_G^n$ (here $\Gr_G$ denotes the affine Grassmannian of $G$). This isomorphism is not
completely canonical, but it is canonical up to the action of the group of automorphisms of
$\Spec\FF_q[\![t]\!]$ acting on every factor in $\Gr_G$ and also up to the action of the symmetric
group $S_n$ permuting those factors. Thus every perverse sheaf on $G_\sF\times\Gr_G^n$ which is
equivariant with respect to these actions gives rise to a perverse sheaf on $\Gr_{x_0,n}^{\on{dis}}$.
Let $\CF$ be an irreducible perverse sheaf on $G_\sF$. Let also $V$ be the vector space of
representation $\rho$ of $G^\vee$, and let $\sfS(V)$ be the (semi-simple) perverse sheaf on $\Gr_G$
corresponding to $V$ under the geometric Satake equivalence. Then the sheaf
$\CF\boxtimes\sfS(V)^{\boxtimes n}$ is a perverse sheaf on $G_\sF\times \Gr_G^n$ with the above
equivariance properties. We denote by $\CF_{\rho,(n)}^\dagger$ the corresponding perverse sheaf
on $\Gr_{x_0,n}^{\on{dis}}$.

We now set
\begin{equation}\label{XF_n}
\XF_{\rho,(n)}=i_n^*(j_{\rho,n})_{!*}\CF_{\rho,(n)}^\dagger.
\end{equation}

\subsection{The space}
We define the space $\calS_{\rho}(G(\sF))$ as the space generated by all the functions $f_{\CF,\rho}$
defined above. It is clear that $\calS_{\rho}(G(\sF)))$ is invariant under left and right action of
$G(\sF)$. Also, every element of $\calS_{\rho}(G(\sF))$ is a locally constant function on $G(\sF)$.


\subsection{The standard case}
\label{the standard case}
Let $G=\GL(N)$ and let $(V,\rho)$ be the standard representation of $G^\vee\simeq\GL(N)$.
Then we claim that in this case
$\calS_{\rho}(G(\sF))$ consists of functions of the form $\phi(g)|\det(g)^{\frac{N-1}{2}}|(-1)^{v(\det(g))}$
where $\phi$ is a locally constant compactly supported function
on the space $\Mat(N,\sF)$ of $N\x N$-matrices over $\sF$ (here  $v\colon \sF^{\times}\to \ZZ$ is the valuation map). For this it enough to prove the following two statements: 


(i) For any $\phi\in \calS_{\rho}(G(\sF))$ the function  $\phi(g)|\det(g)^{-\frac{N-1}{2}}|(-1)^{v(\det(g))}$
extends to a locally constant compactly supported function on $\Mat(N,\sF)$.

(ii) For any $A\in \Mat(N,\sF)$ and for all $k\geq 0$ the constant function on $A+t^k\Mat(N,\sO)$ divided by
$|\det(g)^{\frac{N-1}{2}}|(-1)^{v(\det(g))}$ belongs to $\calS_{\rho}(G(\sF))$.

Statements (i) and (ii) follow from the following ``local" description of the sheaves $\CF_{\rho,(n)}$.
Let $\CF$ be as in~\S\ref{basic}.

Then $\CF$ is equivariant under a congruence subgroup $K_i$ of
$G_{\sF}$. It follows that for $j$ sufficiently large the following things are true: 

1) The closure of the support of $\CF$ in $ \Mat(N,\sF)
$ is invariant under shifts by $t^j\Mat(N,\sO)$;

2) The image  supp$(\CF)$ in  $\Mat(N,\sF)/t^j\Mat(N,\sO)$ is a locally closed subset $Z$;

3) There exists a (perverse up to a shift) sheaf $\overline\CF$ on $Z$ such that $\CF$ (viewed as a sheaf on its support) is equal to the pull-back of $\overline{\CF}$.

Consider now the Goresky-MacPherson extension of
$\overline\CF$ to $\Mat(N,\sF)/t^j\Mat(N,\sO)$ and pull it back to $\Mat(N,\sF)$.
We denote the resulting sheaf by $\CG$, and the corresponding trace of Frobenius function is
denoted by $\phi$. The desired description of $\calS_\rho(G(\sF))$ follows from the next

\begin{lem}\label{gln-lem} Assume that $v(det)=d$ on the support of $\CF$. Then
  we have $f_{\CF,\rho}(g)=\phi(g)|\det(g)^{\frac{N-1}{2}}|q^{-\frac{d(N-1)}{2}}(-1)^{v(\det(g)-d)}$.
\end{lem}

\proof
Since the construction is invariant under the action of $\sF^\times$ on $\Mat(N,\sF)$ by dilations,
we can assume without loss of generality that $d=0$.
Then $\calG$ is supported on  $\Mat(N,\sO)$. Let $\CG_n$ be the
$*$-restriction of $\CG$ to the locus  $v(\det)=n$, a
connected component of $G_{\sF}$.

Then we claim that
\begin{equation}\label{gln-sheaf}
\XF_{\rho,(n)}=\CG_n[n(N-1)]\left(\frac{n(N-1)}{2}\right),
\end{equation}
and (\ref{gln-sheaf}) clearly implies that
\begin{equation}\label{gln-function}
f_{\CF,\rho}(g)=\phi(g)|\det(g)^{\frac{N-1}{2}}|  
(-1)^{v(\det(g))}    ,
\end{equation}
which is what we need to prove. Thus it remains to show the validity of (\ref{gln-sheaf}).
For this, it is enough to show the following.

Let $\Gr_{x_0,n}^\rho$ denote the closure of the subset of
$\Gr_{x_0,n}^{\on{dis}}$  and whose fiber over an $(n+1)$-tuple of distinct points
$(x_0,\ldots,x_n)$ is equal to $\Mat(N,\sO)\times (\Gr_G^\rho)^n$ where $\rho$ is again the
standard representation of $\GL(N)$ and $\Gr_G^\rho$ is the support of the sheaf corresponding to
$\rho$ under the geometric Satake equivalence (this is a closed subvariety of $\Gr_G$ isomorphic to
$\PP^{N-1}$). Note that we have a natural map $\Gr_{x_0,n}^\rho\to \Mat(N,\sO)$ (defined by restricting all data to the formal neighbourhood of $x_0$). We need to show that this map is formally smooth
(the shift and the Tate twist in (\ref{gln-sheaf}) come from the fact that  dimension of the support
of $\sfS(V)$ is equal to $N-1$ in this case).

It is easy  
to see that $\Gr_{x_0,n}^\rho$ is equal to the moduli space of the following data:

1) A vector bundle $E$ on $C$ of rank $N$ and degree $n$.

2) An injective morphism of sheaves $\phi:\calO_C^N\to E$;

3) A trivialization of $E$ in the formal neighbourhood of $x_0$.

\noindent
The restriction of $\phi$ to the formal neighbourhood of $x_0$ gives a $\sO$-linear map
$\sO^N\to\sO^N$, i.e.\ an element of $\Mat(N,\sO)$. It is enough to check the formal smoothness
after taking quotient by $\GL(N,\sO)$. To do this it is enough to show the following:

a) The quotient $\Gr_{x_0,n}^\rho/\GL(N,\sO)$ is a smooth scheme of finite type over $\FF_q$.

b) The map $\Gr_{x_0,n}^\rho/\GL(N,\sO)\to \Mat(N,\sO)/\GL(N,\sO)$ induces a surjection on
tangent spaces.


\noindent
Note that $\Gr_{x_0,n}^\rho/\GL(N,\sO)$ is just the moduli space of data 1) and 2) as above. Thus the fact that it is a scheme of finite type over $\FF_q$ is standard. Also, the tangent space to this scheme is equal to the following.
We have the natural injective map $\underline{\End}(E)\to \underline{\Hom}(\calO_C^N,E)$ given by composition with $\phi$.
Let $\calK(E,\phi)$ denote the cokernel of this map; this is a torsion sheaf on $C$ of length $Nn$. Then the sought-for tangent space is just $H^0(C,\calK(E,\phi))$. In particular, its dimension is always equal to $Nn$, hence we see that
$\Gr_{x_0,n}^\rho/\GL(N,\sO)$ is a smooth scheme of dimension $Nn$.
 
It remains to check the statement b) above. For this let us note that the tangent space to $\Mat(N,\sO)$ at a matrix $A$ is equal to $\Mat(N,\sO)/A\cdot \Mat(N,\sO)$ (note that this only depends on the orbit of $A$ under right multiplication by $\GL(N,\sO)$). Assume that $A$ comes from $(E,\phi)$ as above. Then $\Mat(N,\sO)/A\cdot \Mat(N,\sO)$ is precisely the space of global sections of the direct summand of the sheaf $\calK(E,\phi)$ concentrated at $x_0$. This shows that the map $H^0(C,\calK(E,\phi))\to \Mat(N,\sO)/A\cdot \Mat(N,\sO)$ is surjective.
\qed

Lemma \ref{gln-lem} immediately implies statement (i) above. It also implies statement (ii), since $A+t^k\Mat(N,\sO)$ is a smooth closed subset of $\Mat(N,\sF)$.

\subsection{The twist}

To formulate the precise statement we need one more ingredient.
Let $2\del$ be the sum of the positive roots of $G$.
Let $\Lambda$ denote the coweight lattice of $G$ and let also $Q$ denote the coroot lattice of $G$.
Recall that we have a canonical homomorphism $G(\sF)/[G(\sF),G(\sF)]\to \Lambda/Q$.


Thus the element $2\del$ is a homomorphism from $\Lambda$ to $\ZZ$
which sends $Q$ to $2\ZZ$. Hence it can be regarded as a homomorphism $\Lambda/Q\to \pm 1$ and hence as a character
$\sgn\colon G(\sF)\to \pm 1$.
For a representation $\pi$ of $G(\sF)$ we set $\pi'=\pi\otimes \sgn$.



\subsection{The main conjecture}
We now turn to our main conjecture.
In what follows $dg$ will denote a Haar measure on $G(\sF)$.

\begin{conj}
    \label{main-conj}
\begin{enumerate}
\item $\calH(G(\sF))\subset \calS_{\rho}(G(\sF))$.
  \item
  Let $c(g)$ be a matrix coefficient of an irreducible representation $\pi$ of $G(\sF)$. Then the
  integral
  \begin{equation}
    \label{integral}
  \int_G \phi(g)c(g)|\chi(g)|^s dg
  \end{equation}
  for $\phi\in \calS_{\rho}(G(\sF))$
  is convergent for $Re(s)\gg0$ and it is a rational
  function of $q^s$.
  \item
  For a given $\pi$ as above the set of all rational functions given by \eqref{integral} for all matrix coefficients of
  $\pi$  and all test functions $\phi\in \calS_{\rho}(G(\sF))$ is a fractional ideal of $\CC[q^s,q^{-s}]$, whose generator $L(\pi',\rho,s)$ is the local $L$-function of $\pi'$
  associated with the representation $\rho$.

  \item
  Let $K$ be an open compact subgroup of $G_\sF$ assigned to a compact subset in the building by Bruhat-Tits theory 
  (for example, $K$ can be a congruence subgroup or a parahoric subgroup). Abusing slightly the notation we shall denote the corresponding pro-algebraic subgroup of $G_\sF$ by the same symbol. Then $\calS_{\rho}(G(\sF))^{K\times K}$ is generated by functions
  $f_{\CF,\rho}$, where $\CF$ runs through $K\times K$-equivariant irreducible perverse sheaves on $G_\sF$.
\end{enumerate}
\end{conj}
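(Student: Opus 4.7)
The plan is to treat the four parts of Conjecture \ref{main-conj} in roughly increasing order of depth. Parts (1) and (4) are structural and should follow from a triangularity argument exploiting the $n=0$ term in \eqref{XF_def}; part (2) (convergence and rationality) rests on support bounds coming from geometric Satake together with the $\chi^\svee$-positivity hypothesis; part (3) is the substantive assertion and, in full generality, seems out of reach, so the natural target is the subcategory of representations $\pi$ with Iwahori-fixed vectors, where Bezrukavnikov's coherent realization of the affine Hecke category on the Steinberg variety $\St$ brings both sides into a common framework.

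For part (1), the key observation is that $\Gr_{x_0,0}=G_\sF$ and $\XF_{\rho,(0)}=\CF$, so $f_{\CF,\rho}=f_\CF+\sum_{n\ge1}f_{\XF_{\rho,(n)}}$. Positivity of $\chi^\svee$ on the weights of $\rho$ implies that the support of $\XF_{\rho,(n)}$ lies in a $|\chi|$-stratum strictly further from the identity than $\supp\CF$, and that at any fixed $g\in G(\sF)$ only finitely many $n$ contribute. Applying the BBDG decomposition theorem to each $\XF_{\rho,(n)}$ and inducting on the $|\chi|$-valuation of the support, the system is triangular and can be inverted to write each $f_\CF$, in particular each $1_{KgK}$ generating $\calH(G(\sF))$, as a $\ZZ$-linear combination of $f_{\CF',\rho}$. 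The same argument is compatible with $K\times K$-averaging and simultaneously yields part (4): if $f_{\CF,\rho}$ is $K\times K$-invariant, averaging $\CF$ to its $K\times K$-equivariant summand does not alter the associated function, so the inductive inversion can be performed inside the $K\times K$-equivariant subcategory.

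For part (2), the support of each $\sfS(V)^{\boxtimes n}$ is concentrated on $G(\sO)$-orbits in $\Gr_G^n$ indexed by the weights of $V^{\otimes n}$, and under the degeneration from $\Gr_{x_0,n}^0$ to the central fiber these contributions translate $\supp\CF$ by elements whose $\chi$-valuation tends to $+\infty$ with $n$ by positivity. Combined with Harish-Chandra asymptotics of matrix coefficients, this gives absolute convergence of $\int_G \phi(g)c(g)|\chi(g)|^s\,dg$ on a right half-plane, and on each $|\chi|$-stratum the integrand is a finite sum of terms of the form $q^{-ms}$ times a fixed constant, so the total integral is a sum of geometric series in $q^{-s}$ and hence rational.

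The main obstacle is part (3). My plan is to restrict to Iwahori-bi-invariant test functions and Iwahori-spherical $\pi$ and to match, under Bezrukavnikov's equivalence between the affine Hecke category and coherent sheaves on $\St$, the generating series $\sum_n f_{\XF_{\rho,(n)}}$ with the Frobenius trace of a distinguished complex on an enhanced version of $\St$ built from $\rho$. Heuristically, summing $\sfS(V)^{\boxtimes n}$ over $n$ weighted by $\chi^\svee(q^{-s})$ should correspond on the spectral side to $\Sym^\blt$ of a tautological bundle associated to $\rho$, and pairing with the Kazhdan-Lusztig parameter $\sig_{\pi,\rho}(\Fr)$ and restricting to the $\II$-invariants of $V$ should reproduce the right-hand side of \eqref{lrho}. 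The base case is the torus, where $\St$ collapses and the match can be verified by direct Satake calculation; propagating this through the affine Hecke setup to the general Iwahori case, and in particular reconciling the geometric Beilinson-Drinfeld sum in $n$ on the perverse side with the infinite symmetric power on the coherent side, is where I expect the principal technical difficulty.
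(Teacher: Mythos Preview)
First, a framing point: the statement is labeled a conjecture, and the paper does not prove it in full. What the paper establishes is Theorem~\ref{main-intro}: parts (1)--(3) hold for the subspace $\calS_\rho(G,K)\subset\calS_\rho(G(\sF))^{K\times K}$ when $K$ is $G(\sO)$ or the Iwahori subgroup $I$. Part~(4) is left entirely open; the remark following the conjecture explicitly notes that $\calS_\rho(G,K)$ is not a priori an $H(G,K)$-bimodule, and the paper invokes~(4) only as a hypothesis.

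Your argument for (4) has a genuine gap. The claim that ``averaging $\CF$ to its $K\times K$-equivariant summand does not alter the associated function'' presupposes that the assignment $\CF\mapsto f_{\CF,\rho}$ commutes with $K\times K$-averaging at the sheaf level. But the construction of $\XF_{\rho,(n)}$ passes through the intermediate extension $j_{!*}$ on the ramified Beilinson--Drinfeld Grassmannian, and $j_{!*}$ has no reason to commute with an averaging functor (which need not even preserve perversity in general). Moreover, an element of $\calS_\rho(G(\sF))^{K\times K}$ is a $K\times K$-invariant \emph{linear combination} $\sum a_i f_{\CF_i,\rho}$ with the $\CF_i$ arbitrary irreducibles; invariance of the sum does not force invariance of the individual summands, so the inductive inversion cannot obviously be confined to the equivariant subcategory. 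This is precisely why the paper treats~(4) as open.

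Your triangularity idea for (1) is a reasonable heuristic and is genuinely different from the paper's route. One technical quibble: you invoke the decomposition theorem for $\XF_{\rho,(n)}$, but this object is $i_n^*$ of an IC sheaf, not a proper pushforward, so BBDG does not apply directly; what you actually need is only that trace functions of irreducible perverse sheaves span the relevant function space, together with the $\chi$-positivity support bound. The paper does not pursue this; instead it proves (1) only for $K=G(\sO)$ and $K=I$ as a consequence of the structural Theorem~\ref{main-intro-exp}: in the spherical case $\calS_\rho(G,G(\sO))=f_\rho\star H(G,G(\sO))$ with $f_\rho$ the Satake function of $\Sym(V)$, and in the Iwahori case $\calS_\rho(G,I)\cong K^{G^\vee\times\GG_m}(Z_\rho)_q$ for an enhanced Steinberg variety $Z_\rho$. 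Both descriptions make (1), the bimodule property, and the deduction of (2)--(3) transparent for those $K$.

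For (3) your plan is on target and matches the paper's approach: the argument transports the problem to the coherent side via the equivalence of \cite{BHecke}, identifies $\bigoplus_n\CF_{(n)}$ with (the image of a map of) Koszul-type complexes on $\St\times V^*$ supported on $Z_\rho$ (Theorem~\ref{Th2}, Corollary~\ref{free support}), and then runs a Godement--Jacquet style computation on the spectral side using the Kazhdan--Lusztig realization of Iwahori-spherical representations (\S\ref{deduction}) to recover the expected $L$-factor. The ``principal technical difficulty'' you anticipate---matching the Beilinson--Drinfeld sum over $n$ with the symmetric algebra on the coherent side---is exactly what \S\ref{BD} and \S\ref{proofs_RB} carry out.
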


\noindent
\begin{rem}
  For $G=\GL(N)$ and $\rho$ the tautological representation, Conjecture~\ref{main-conj}(1-3) follows
  from~\cite{gj} and~\S\ref{the standard case}. Assertion 4 also follows easily in the following way: assume that
  $\calF$ is an irreducible perverse sheaf on $G_\sF$. Let $\calG$ be as above. Then  $\calG$ is an irreducible perverse sheaf on $\Mat(N,\sF)$. Assume that the corresponding function is $K\times K$-invariant, but $\calF$ (and thus $\calG$) is equivariant with respect to some smaller congruence subgroup $\Gam$. Let $\calG_K$ be the $!$-averaging of $\calG$ with respect to $K/\Gam\times K/\Gam$. Then up to a constant its trace function does not change.
  However, $\calG_K$ is an object of the bounded derived category of $K\times K$-equivariant sheaves on $\Mat(N,\sF)$, and thus every irreducible subquotient $\calG_{\alp,i}$ of every perverse cohomology $^pH^i(\calG_K)$ of $\calG_K$ is $K\times K$-equivariant (here $\alp$ runs over some set of indices, which depends on $i$; the fact that only finitely many $^pH^i(\calG_K)\neq 0$ and that for all $i$ the sheaf $^pH^i(\calG_K)$ is of finite length follows from the fact that in this case averaging over $K/\Gamma$ can be described by reducing to an averaging of a sheaf on a scheme of finite type -- in the way similar to the procedure discussed before Lemma \ref{gln-lem}). Each of those irreducible subquotients is equal to the Goresky-MacPherson extension of some irreducible perverse sheaf $\calF_{\alp,i}$ on $G_\sF$. Then the function $f_{\calF,\rho}$ obviously lies in the span of the functions $f_{\calF_{\alp,i}}$.
\end{rem}

\noindent
\begin{rem} Let $K$ be an open compact subgroup of $G(\sF)$  such that
$\pi^K\neq 0$. Then it is easy to see that in order to get the correct fractional ideal in (3), it is enough
to consider the integrals \eqref{integral} with both $\phi$ and $c$ left and right $K$-invariant.
\end{rem}

On the other hand, assume that $K$ is the group of $\FF_q$-points of an algebraic group subscheme of $G_\sF$ containing a sufficiently small congruence subgroup (as before we shall denote this group scheme by the same letter $K$).
Let us denote by  $\calS_{\rho}(G,K)$ the space generated by functions
  $f_{\CF,\rho}$, where $\CF$ runs through $K\times K$-equivariant irreducible perverse sheaves on $G_\sF$. Clearly we have $\calS_{\rho}(G,K)\subset \calS_{\rho}(G(\sF))^{K\x K}$. Note that $\calS_{\rho}(G(\sF))^{K\x K}$ is obviously a bimodule
over the Hecke algebra $H(G,K)$, while $\calS_{\rho}(G,K)$ is a priori not (although according
to~Conjecture~\ref{main-conj}(4) these two spaces coincide, at least when $K$ is nice enough).
Given $K$ as above we shall say that Conjecture~\ref{main-conj} holds for $\calS_{\rho}(G,K)$ if the
statements (1--4) of Conjecture~\ref{main-conj} hold when $\calS_\rho(G(\sF))$ is replaced by
$\calS_\rho(G,K)$; $\calH(G(\sF))$ is replaced by $H(G,K)$ (the Hecke algebra of $G(\sF)$
corresponding to $K$); representation $\pi$ is such that $\pi^K\neq 0$; and all matrix coefficients
appearing in (\ref{integral}) are assumed to be $K\times K$-invariant.

The main result of this paper is the following
\begin{thm}
  \label{main-intro}
  Let $K$ be either $G(\sO)$ or the Iwahori subgroup $I$. Then the statements (1)-(3)
  of~Conjecture~\ref{main-conj} hold
 for $\calS_{\rho}(G,K)$ (instead of the full $\calS_{\rho}(G(\sF))$).
\end{thm}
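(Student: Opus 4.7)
The plan is to treat the two cases $K = G(\sO)$ and $K = I$ in parallel, splitting the assertion into three logically independent tasks: (a) an explicit description of $\calS_\rho(G,K)$ as a bimodule over $H(G,K)$; (b) convergence of the zeta integral~\eqref{integral} for $\Re(s)\gg 0$ together with its rationality in $q^s$; and (c) identification of the resulting fractional ideal with the principal ideal generated by $L(\pi',\rho,s)$ for $\pi$ with $\pi^K\neq 0$. Task (a) is the only step that is genuinely geometric; tasks (b) and (c) will then follow largely formally.

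I would dispose of the spherical case $K = G(\sO)$ first. The $G(\sO)\times G(\sO)$-equivariant irreducible perverse sheaves on $G_\sF$ are the IC sheaves $\IC_\lam$ of the spherical orbits through $t^\lam$, where $\lam$ ranges over dominant coweights. Using the factorization structure of the ramified Beilinson--Drinfeld Grassmannian $\Gr_{x_0,n}\to \Sym^n C$ together with the geometric Satake equivalence, one reads off $\XF_{\rho,(n)}$ as $\IC_\lam$ convolved with the $n$-fold self-convolution of $\sfS(V)$, and the sum~\eqref{XF_def} becomes the convolution of the characteristic function of the orbit $\lam$ with the classical basic function attached to $\rho$ on the unramified affine Grassmannian. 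The zeta integral against a spherical matrix coefficient of $\pi$ then reduces to a Satake-transformed trace computation producing~\eqref{lrho} up to the sign twist built into $\pi'$.

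The Iwahori case is the heart of the matter. Here $I\times I$-equivariant irreducible perverse sheaves on $G_\sF$ are the IC sheaves $\IC_w$ indexed by the extended affine Weyl group, so $\calS_\rho(G,I)$ is spanned by the functions $f_{\IC_w,\rho}$. I would compute these in two stages. On the geometric side, carried out in~\S\ref{global}, the ramified BD Grassmannian and its factorization over $\Sym^n C$ reduce $\XF_{\rho,(n)}$ to an iterated convolution of $\IC_w$ with $\sfS(V)$, which after taking traces of Frobenius yields an explicit element of a suitable completion of the Iwahori Hecke bimodule tensored with the representation ring of $G^\vee$. On the spectral side, carried out in~\S\ref{proofs_RB}, I would transport this calculation through the coherent realization of the affine Hecke category, so that convolution with $\sfS(V)$ becomes tensoring by the tautological bundle attached to $\rho$ on an enhanced Steinberg variety. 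The Kazhdan--Lusztig--Ginzburg classification of irreducible $\pi$ with $\pi^I\neq 0$ in terms of Langlands parameters $\sig_{\pi,\rho}\colon \WW\to G^\vee$ then identifies the pairing of $f_{\IC_w,\rho}$ with a matrix coefficient of $\pi$ as the trace of $\sig_{\pi,\rho}(\Fr)$ on an appropriate fiber, restricted to the inertia-fixed part, and the geometric series in $q^{-s}$ implicit in the summation~\eqref{XF_def} collapses the sum over $n$ into the determinantal expression~\eqref{lrho}.

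Once task (a) is settled, (b) is immediate: admissibility of $\pi$ gives the required decay of matrix coefficients on the support of each $f_{\CF,\rho}$, and the geometric-series structure produces a rational function whose denominator divides the characteristic polynomial of $\sig_{\pi,\rho}(\Fr)$ on $V^\II$. The sign character $\sgn$ coming from $2\del$ accounts precisely for the $\rhovee$-shift familiar from the Kazhdan--Lusztig parametrization and for the $|\det|^{(N-1)/2}$ factor encountered in the $\GL(N)$ example. The principal obstacle will be the compatibility, underpinning the passage from~\S\ref{global} to~\S\ref{proofs_RB}, between perverse sheaves on the ramified Grassmannian and the coherent realization of the affine Hecke category: one has to check that convolution with $\sfS(V)$ on the geometric side matches the expected tautological twist on the coherent side, which is essentially the content of~\propref{nadler} due to Achar--Riche.
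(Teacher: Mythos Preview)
Your overall architecture matches the paper's: the spherical case follows from geometric Satake and factorization (a small correction: $\XF_{\rho,(n)}=\CF\star\sfS(\Sym^n V)$, not the $n$-fold self-convolution $\sfS(V^{\otimes n})$; the symmetrization is built into the base $\Sym^n C$), and the Iwahori case is handled by transporting the answer through the coherent realization of the affine Hecke category from~\cite{BHecke}.

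There is, however, a genuine gap in your Iwahori argument. You assert that factorization ``reduce[s] $\XF_{\rho,(n)}$ to an iterated convolution of $\IC_w$ with $\sfS(V)$'', but this is not what happens. What factorization and Proposition~\ref{nadler} actually give is that the \emph{iterated nearby cycles} $\psi_n\cdots\psi_1$ of the IC extension are isomorphic to $\IC_w\star(\sZ V)^{\star n}$, equipped with commuting nilpotent monodromy operators $s_1,\dots,s_n$. The functor $\imath^!\jmath_{!*}$ is not the same as iterated nearby cycles; the two differ by exactly the monodromy data. The paper's computation (Theorem~\ref{calcul}, Corollary~\ref{calcul sym}) expresses $\bigoplus_n\CF_{(n)}$ as the image of a morphism between Koszul complexes built from $s$, and the upshot (Corollary~\ref{free support}) is that this is a free $\Sym^\bullet\sZ V$-module \emph{supported on $\Ker s$}.

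That support condition is the whole point. Under the equivalence of~\cite{BHecke}, the monodromy $s$ on $\sZ V$ corresponds to the tautological nilpotent $x\in\Lg$ acting on $V$, so ``supported on $\Ker s$'' becomes ``supported on $Z_\rho=\{x(\xi)=0\}\subset\St\times V^*$''. Were $\XF_{\rho,(n)}$ merely a convolution as you suggest, the spectral answer would be $K^{G^\vee\times\Gm}(\St\times V^*)$ rather than $K^{G^\vee\times\Gm}(Z_\rho)$, and the zeta integral would have denominator $\det(1-q^{-s}\sig_{\pi,\rho}(\Fr)|_V)$ rather than the restriction to $V^{\II}\cong V^e$ demanded by~\eqref{lrho}. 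You correctly anticipate that $V^{\II}$ must appear in the end, but your outline supplies no mechanism to produce it; the monodromy on the central sheaf is precisely that mechanism. Finally, Proposition~\ref{nadler} lives entirely on the constructible side (commutation of nearby cycles); the passage to the coherent side is a separate input from~\cite{BHecke}, and is the content of~\S\ref{proofs_RB} and Proposition~\ref{abc}.
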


Note that if we assume~Conjecture~\ref{main-conj}(4) then~Theorem~\ref{main-intro} says that the
space $\calS_\rho(G(\sF))^{K\times K}$ computes the correct $L$-function for representations $\pi$ which
have a non-zero $K$-fixed vector (note also that we do know the definition of $L(\pi,\rho,s)$ for such
representations, since the local Langlands conjecture is known for representations with Iwahori-fixed
vector).

More precisely, we are going to prove the following:


\begin{thm}
  \label{main-intro-exp}
\begin{enumerate}
\item
  The space $\calS_{\rho}(G,G(\sO))$ is equal to $f_{\rho}\star H(G,G(\sO))$ where $f_{\rho}$ is the
  function on $G(\sF)$ corresponding to the representation $\Sym(V)$ under the Satake isomorphism.
  Here $V$ is the vector space of representation $\rho$.

\item Let $\St$ be the Steinberg variety of $G^{\vee}$ classifying triples $(x,\grb_1,\grb_2)$ where each $\grb_i$ is a Borel subalgebra of $\grg^\svee=\on{Lie}(G^{\vee})$ and $x$ is a nilpotent element in $\grb_1\cap \grb_2$. Consider the equivariant $K$-theory $K^{G^{\vee}\times \GG_m}(\St)$. This is an algebra over $\CC[\bfq,\bfq^{-1}]=K^{\GG_m}(\pt)$. Identify $H(G,I)$ with $K^{G^{\vee}\times \GG_m}(\St)_q$ where the latter stands for the specialization of $K^{G^{\vee}\times \GG_m}(\St)$ to $\bfq=q$ \cite[\S 7.2]{CG}.
 Then $\calS_{\rho}(G,I)$ is naturally isomorphic to $K^{G^{\vee}\times \GG_m}(Z_{\rho})_q$ where $Z_{\rho}$ is the scheme classifying quadruples $(x,\grb_1,\grb_2,\xi)$ where $(x,\grb_1,\grb_2)$ are as before and $\xi\in V^*$ is such that
    $x(\xi)=0$.
\end{enumerate}
\end{thm}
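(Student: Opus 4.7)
The plan is to handle part (1) by a direct factorization argument on the ramified global Grassmannian combined with geometric Satake, and then to bootstrap part (2) by transporting the construction to the Iwahori setting via the coherent realization of the affine Hecke category. For (1), I would begin with the unit spherical sheaf $\CF = \IC_{G(\sO)}$. Over the open stratum $\Gr_{x_0,n}^0$ the map $\pi_n$ realizes the fiber as $G_\sF \times \Gr_G^n$ (up to the relevant $\Aut(\sO)$- and $S_n$-symmetries), and $\CF_{\rho,(n)}^\dagger$ is the external product of the unit sheaf with $\sfS(V)^{\boxtimes n}$. Applying $i_n^*(j_n)_{!*}$ and passing to $S_n$-invariants (coming from the unordered structure) realizes, via Mirkovi\'c-Vilonen fusion, the spherical sheaf $\sfS(\Sym^n V)$ on $\Gr_G \subset G_\sF$, up to a Tate twist absorbed into the normalization of Satake. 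Summing the Frobenius traces over $n \geq 0$ reproduces exactly $f_\rho$. The same factorization argument applied to a general spherical $\CF$ corresponding to $U \in \Rep(G^\vee)$ gives $f_{\CF,\rho} = f_\rho \star f_\CF$, so (1) follows since $H(G,G(\sO))$ is spanned by such $f_\CF$.

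For (2), I would apply Bezrukavnikov's equivalence between the $I\times I$-equivariant constructible derived category on $G_\sF$ and the $G^\vee\times\GG_m$-equivariant coherent derived category on $\St$, whose $K$-theoretic shadow is the identification $H(G,I) \simeq K^{G^\vee\times\GG_m}(\St)_q$ in the statement. Under this equivalence the sheaves $\XF_{\rho,(n)}$ should have a coherent interpretation coming from two inputs. First, Gaitsgory's central-functor compatibility identifies the nearby cycles of a single Satake insertion $\sfS(V)$ with the functor on $\St$ of tensoring by the tautological bundle $V \otimes \CO_\St$, and equips the nearby sheaf with the action of the nilpotent $x$. Second, iterating this for $n$ unordered insertions and passing to the $(j_n)_{!*}$-extension should realize, on the coherent side, the $\Sym(V^*)$-module $\CO_\St[\xi]/(x \cdot \xi)$ on the total space $\St \times V^* \to \St$; this is precisely $\CO_{Z_\rho}$. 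Passing to $G^\vee\times\GG_m$-equivariant $K$-theory would then identify $\calS_\rho(G,I)$ with $K^{G^\vee\times\GG_m}(Z_\rho)_q$, compatibly with the right convolution action of $H(G,I)$.

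The main technical obstacle is pinning down the second compatibility above: that the $(j_n)_{!*}$-extension of the collision of $n$ Satake points produces exactly the relation $x \cdot \xi = 0$ rather than some a priori weaker vanishing coming from $\Sym^n V$ alone. I expect this to follow from a careful singular-support and Koszul-duality analysis in the Achar-Riche framework used in Proposition~\ref{nadler}. Once (1) and (2) are in hand, parts (1)--(3) of Conjecture~\ref{main-conj} for $K = G(\sO)$ and $K = I$ follow by pairing the resulting module structure against a matrix coefficient of $\pi$: in the spherical case a Macdonald-type formula directly yields $L(\pi',\rho,s)$, while in the Iwahori case the pairing of a class in $K^{G^\vee\times\GG_m}(Z_\rho)_q$ against the spectral data $(\sig_{\pi,\rho}(\Fr), x)$ of $\pi$ reproduces $\det(1 - \chi^\svee(q^{-s}) \sig_{\pi,\rho}(\Fr)|_{V^{\II}})^{-1}$.
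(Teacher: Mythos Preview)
Your treatment of part~(1) is correct and coincides with the paper's argument: the paper simply records that $\XF_{\rho,(n)}=\CF\star\sfS(\Sym^n V)$ for spherical $\CF$ (the fusion/factorization statement you describe) and reads off $f_{\XF,\rho}=f_\CF\star f_{\sfS(\Sym V)}$.

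For part~(2) your outline has the right architecture --- apply the equivalence $D^b\Perv_I(\Fl)\cong D^b\Coh^{G^\vee}(\St)$, use that central sheaves $\sZ(V)$ go to tensoring by $V\otimes\CO_\St$ with the tautological nilpotent action, and interpret the collision of Satake points coherently --- but there are two substantive gaps.

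First, the computation of $i_n^*j_{n!*}$ (equivalently the corestriction $\imath^!\jmath_{!*}$) is not handled by a ``singular-support and Koszul-duality analysis''. The paper instead computes $\imath^!\wt\CF_n$ explicitly by iterated Beilinson gluing: using commutation of nearby and vanishing cycles in the Achar--Riche framework, one identifies the corestriction with the image of an explicit map of Koszul-type complexes built from $\CF\star(\sZ V)^{\star n}$ and the monodromy endomorphisms $s_i$. It is this concrete description (the image of $\sfs\colon\sK_s^\bullet\to\sK_1^\bullet$) that exhibits $\oplus_n\CF_{(n)}$ as a free $\Sym^\bullet\sZ V$-module supported on $\operatorname{Ker}s$, i.e.\ on $Z_\rho$ after transport. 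Your proposal does not supply a mechanism producing the relation $x\cdot\xi=0$; the paper obtains it because the Koszul complex $\sK_s^\bullet$ literally imposes it.

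Second, and more seriously, you do not address why the resulting classes \emph{span} $K^{G^\vee\times\GG_m}(Z_\rho)$. Knowing that each $\CE^w$ is supported on $Z_\rho$ only gives an inclusion $\calS_\rho(G,I)\subset K^{G^\vee\times\GG_m}(Z_\rho)_q$. The paper proves that the $\CE^w$ freely generate $K(\Coh_{Z_\rho}^{G^\vee\times\GG_m}(\St\times V^*))$ by an induction along the filtration by nilpotent orbits: using the perverse coherent $t$-structure one shows $\CE^w$ is supported on $\overline{O}_w$ and that its restriction to the open stratum $O_w$ is $\EE_{L_w}\otimes\Sym(\EE_{V_e})[d_w]$, an explicit generator there. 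Your focus on the single object $\CO_{Z_\rho}$ (i.e.\ the case of the unit $\CF$) does not give this; one genuinely needs the full family $\{\CE^w\}_{w\in W_\aff}$ together with the orbit-by-orbit control.
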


See Theorem \ref{Th1} for a more precise version of Theorem \ref{main-intro-exp}(2).

Theorem~\ref{main-intro} will be deduced from~Theorem~\ref{Th1} in~\S\ref{deduction}.

\subsection{Changing $\rho$}
We conclude this Section by adding another conjecture to Conjecture~\ref{main-conj}. 
Notice that the formula \eqref{lrho} makes it clear that the local $L$-factor defined by means
of local Langlands correspondence satisfies $L(\pi, \rho_1\oplus \rho_2; s) =L(\pi, \rho_1; s) \cdot L(\pi, \rho_2; s)$. 
This motivates the following 


\begin{conj}\label{conv}
Let $\rho=\rho_1\oplus\rho_2$. Then $\calS_{\rho}(G)$ is the span of functions of the form $\phi_1\star \phi_2$ where
$\phi_i\in \calS_{\rho_i}(G)$.
\end{conj}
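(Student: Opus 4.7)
The proof reduces to a single ``fusion-equals-convolution'' identity for the basic sheaves: for any irreducible perverse sheaf $\CF$ on $G_\sF$ and any representation $\rho$ with underlying vector space $V$,
\begin{equation*}
\XF_{\rho,(n)}^{\CF}\ \simeq\ \CF\,\star\,\sfS(\Sym^{n} V),
\end{equation*}
where $\sfS(\Sym^n V)$ is the Satake sheaf on $\Gr_G$ corresponding to the representation $\Sym^n V$ of $G^\vee$ and $\star$ denotes the convolution action of $\Gr_G$-sheaves on $G_\sF$-sheaves. Granting this, the conjecture follows by an elementary manipulation: the classical identity $\Sym^{n}(V_1\oplus V_2)=\bigoplus_{n_1+n_2=n}\Sym^{n_1}V_1\otimes\Sym^{n_2}V_2$ together with the monoidal structure of geometric Satake gives
\begin{equation*}
\sfS\bigl(\Sym^{n}(V_1\oplus V_2)\bigr)\ \simeq\ \bigoplus_{n_1+n_2=n}\sfS(\Sym^{n_1}V_1)\,\star\,\sfS(\Sym^{n_2}V_2).
\end{equation*}
Applying the key identity once to $\rho_1\oplus\rho_2$ and once to each of $\rho_1$ and $\rho_2$, and regrouping convolutions, yields for arbitrary irreducible $\CF_1,\CF_2$ the multiplicativity
\begin{equation*}
f_{\CF_1\star\CF_2,\,\rho_1\oplus\rho_2}\ =\ f_{\CF_1,\rho_1}\,\star\,f_{\CF_2,\rho_2}.
\end{equation*}
The $\supseteq$ inclusion in the conjecture is then immediate. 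For the $\subseteq$ inclusion, take $\CF_1$ to be the skyscraper at $e\in G_\sF$, an irreducible perverse sheaf with $\CF_1\star\CF_2\simeq\CF_2$; then every generator $f_{\CF_2,\rho_1\oplus\rho_2}$ of $\calS_{\rho_1\oplus\rho_2}(G)$ is exhibited as $f_{\CF_1,\rho_1}\star f_{\CF_2,\rho_2}$ with factors in $\calS_{\rho_1}(G)$ and $\calS_{\rho_2}(G)$.

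To prove the key identity I would unwind the definition of $\XF_{\rho,(n)}^{\CF}$ using the factorization structure of the Beilinson--Drinfeld--Gaitsgory ramified Grassmannian $\Gr_{x_0,n}$. Over the open locus $\Gr_{x_0,n}^0$ the fiber of $\pi_n$ is, up to the loop-disc automorphism and $S_n$ ambiguities of \secref{basic}, $G_\sF\times\Gr_G^n$ with defining sheaf $\CF\boxtimes\sfS(V)^{\boxtimes n}$. The Mirkovi\'c--Vilonen and Gaitsgory fusion theorem identifies the intermediate extension $(j_n)_{!*}$ across the small diagonal in $\Sym^n C$ with the convolution product on $\Gr_G$: fusing $n$ copies of $\sfS(V)$ at a single point produces $\sfS(V^{\otimes n})$. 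Because the movable points are unordered, the $S_n$-equivariant descent extracts the $S_n$-invariants of $V^{\otimes n}$, which is $\Sym^n V$ in characteristic zero, and the final restriction $i_n^*$ to the fiber over $(x_0,\dots,x_0)$ translates this fusion on $\Gr_G$ into the convolution action on the ambient sheaf $\CF$ on $G_\sF$.

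The principal obstacle lies in the ramified nature of the fusion theorem. The classical nearby-cycles arguments of Mirkovi\'c--Vilonen and Gaitsgory treat $\sfS(V_1)\star\sfS(V_2)$ on the unramified affine Grassmannian, whereas here the ambient sheaf $\CF$ at $x_0$ need not be $G(\sO)$-equivariant and may have arbitrary singularities. One must verify that its presence is compatible with the nearby-cycles-equals-convolution identification; this should amount to a proper-base-change argument along the convolution morphism $G_\sF\wtimes\Gr_G\to G_\sF$, but checking its compatibility with the Goresky--MacPherson extension on the possibly non-Cohen--Macaulay $\Gr_{x_0,n}$ is the technical heart of the matter. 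A secondary subtlety concerns the $S_n$-equivariant descent: one must ensure that no correction terms appear at the degenerate strata of $\Sym^n C$ where stabilizers jump, so that the invariants of the fusion product really give $\CF\star\sfS(\Sym^n V)$ on the nose.
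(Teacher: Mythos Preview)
The statement is a \emph{conjecture} in the paper, not a theorem; the paper offers no proof and invokes it only as a standing hypothesis in \S\ref{toric}.

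Your argument hinges on the identity $\XF_{\rho,(n)}^{\CF}\simeq\CF\star\sfS(\Sym^{n} V)$ for an \emph{arbitrary} irreducible perverse sheaf $\CF$ on $G_\sF$, and this identity is false outside the spherical case. The paper proves it in \S\ref{spherical case} only when $\CF$ is $G(\sO)$-biequivariant; the whole of \S\ref{BD} is then devoted to computing the Iwahori case, and the answer (Theorem~\ref{calcul}, Corollary~\ref{calcul sym}) is the Koszul-type complex $\sK_{s1}^\bullet$ built from $\CF\star(\sZ V)^{\star n}$ together with the logarithm-of-monodromy operators $s$, not $\CF\star\sfS(\Sym^n V)$. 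Gaitsgory's central sheaf $\sZ V\in\Perv_I(\Fl)$ is not semisimple and carries a nontrivial nilpotent endomorphism; it is precisely this monodromy, absent from the unramified Mirkovi\'c--Vilonen fusion, that governs the $i_n^*$ (dually $\imath^!$) restriction of the intermediate extension. So the ``principal obstacle'' you flag is not a base-change technicality but a genuine failure of the formula: when $\CF$ is not $G(\sO)$-equivariant, colliding the moving points with $x_0$ produces central sheaves with monodromy rather than Satake sheaves, and the restriction of $j_{!*}$ records that monodromy in an essential way.

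A secondary gap: even where your identity holds, the step from it to $f_{\CF_1\star\CF_2,\rho_1\oplus\rho_2}=f_{\CF_1,\rho_1}\star f_{\CF_2,\rho_2}$ requires $f_{\sfS(\Sym V_1)}$ to commute with an arbitrary $f_{\CF_2}$ under convolution. The spherical Hecke algebra is central in the Iwahori Hecke algebra, but not in the full smooth algebra $\calH(G(\sF))$, so this commutation fails in the generality the conjecture demands.
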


\section{Toric case}\label{toric}
\subsection{The statement} In this Section we consider $G=T$: a split torus. We would like to give another definition of the space $\calS_\rho(T(\sF))$ and prove that it is equivalent to the original one. This will allow us to prove Conjecture~\ref{main-conj} in the toric case.

The representation $\rho$ of $T^{\vee}$ is essentially given by an $n$-tuple of weights $(\lam_1,\ldots,\lam_n)$ of $T^{\vee}$ which can be regarded as coweights of $T$. Thus we get a natural morphism
$p_\rho\colon\GG_m^n\to T$ sending $(x_1,\ldots,x_n)$ to
$\prod \lam_i(x_i)$.

We shall say that {\em $\rho$ is non-degenerate} if the map $p_{\rho}$ is surjective.
If $\rho$ is degenerate, we let $T'$ denote the image of $p_{\rho}$. Then $\rho$ can be regarded as a non-degenerate
representation of $(T')^{\vee}$. Set also $T''=T/T'$.

Let us assume that $\rho$ is non-degenerate. Choose $\phi\in \calS(\sF^n)$ (that is a locally constant
compactly supported function on $\sF^n$). Let us denote by $(p_\rho)_!(\phi)$ the locally constant
function on $T(\sF)$ such that $(p_\rho)_!(\phi)dt$ is equal to the direct image of the distribution
$\phi |d^*x_1\cdots d^*x_n|$ (restricted to $(\sF^{\times})^n$ under the map $p_\rho$). It is easy to see that $(p_\rho)_!(\phi)$ is well defined and that it is locally costant on $T(\sF)$.

\begin{prop}
Assume Conjecture \ref{conv}. Then the following are true:
\begin{enumerate}
\item
Assume that $\rho$ is non-degenerate. Then the space $\calS_{\rho}(T)$ consists of functions of the form $(p_\rho)_!(\phi)$.
\item
For general $\rho$ the space $\calS_{\rho}(T)$ consists of locally constant functions $\phi$ on $T(\sF)$ such that

\textup{a)} The image of the support of $\phi$ in $T''(\sF)$ is compact.

\textup{b)} The restriction of $\phi$ to any fiber of the map $T(\sF)\to T''(\sF)$
(which can be canonically up to shift identified with $T'(\sF)$) lies in $\calS_{\rho}(T')$
(for which we have an explicit description by~\textup{(1)} above).

\noindent
For example, when $\rho=0$ we get $\calS_{\rho}(T)=\calS(T)$.

\end{enumerate}
\end{prop}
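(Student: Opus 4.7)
The plan is to first establish part (1) by reducing, via Conjecture~\ref{conv}, to a single-cocharacter computation, and then to deduce part (2) from (1) by descent along the quotient $T(\sF)\to T''(\sF)$.

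For part (1), I would decompose $\rho=\rho_1\oplus\cdots\oplus\rho_n$ into weight lines, where $\rho_i$ corresponds to the cocharacter $\lambda_i\colon\GG_m\to T$. Conjecture~\ref{conv} then gives
\[
\calS_\rho(T)=\calS_{\rho_1}(T)\star\cdots\star\calS_{\rho_n}(T),
\]
so it suffices to (a) identify each $\calS_{\rho_i}(T)$ and (b) compute the $n$-fold convolution. For (a), fix $\lambda=\lambda_i$; since $T_\sF$ has discrete affine Grassmannian $\Gr_T=\Lambda$, the Satake sheaf $\sfS(V_\lambda)$ is a shifted skyscraper at $\lambda\in\Lambda$, and the intermediate-extension appearing in~\eqref{XF_n} degenerates to a translation by $\lambda$ combined with an IC extension on the base $\Sym^n C$. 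Unwinding this for irreducible $T(\sO)$-equivariant $\CF$ and taking trace of Frobenius shows that $\calS_{\rho_i}(T)$ equals $(\lambda_i)_!\calS(\sF)$, i.e.\ the $n=1$ case of the statement; the underlying rank-one computation is a mild variant of the $\GL_1$ case treated in Section~2.5. For (b), Fubini applied to $p_\rho=\prod\lambda_i$ yields
\[
(\lambda_1)_!\phi_1\star\cdots\star(\lambda_n)_!\phi_n=(p_\rho)_!(\phi_1\otimes\cdots\otimes\phi_n),
\]
and since pure tensors span $\calS(\sF^n)$ we conclude $\calS_\rho(T)=(p_\rho)_!\calS(\sF^n)$.

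For part (2), the weights of $\rho$ lie in the sublattice $\Lambda'\subset\Lambda$ corresponding to $T'\subset T$. Hence $\sfS(V)$ is supported along the $\Lambda'$-direction of $\Gr_T$, so each $\CF^\dagger_{\rho,(n)}$ in~\eqref{XF_n} differs from $\CF$ only in the $T'$-direction. Two consequences follow. First, any basic function $f_{\CF,\rho}$ has compact image in $T''(\sF)$, since $\CF$ meets only finitely many cosets modulo any fixed congruence subgroup. Second, the restriction of $f_{\CF,\rho}$ to any $T'(\sF)$-coset is, by the same construction, an element of $\calS_\rho(T')$ for the now non-degenerate representation of $(T')^\vee$. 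Conversely, any function satisfying the two listed properties decomposes as a sum of pieces supported on single $T'(\sF)$-cosets, each realized as an $f_{\CF,\rho}$ by applying part (1) on $T'$.

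The main obstacle will be the rank-one computation in step (a): one must carefully match the $(p_\rho)_!$ normalization of the statement against the Tate twists and cohomological shifts appearing in~\eqref{XF_n}, analogously to how the $|\det|^{(N-1)/2}(-1)^{v(\det)}$ factors were extracted in~\eqref{gln-function} for $\GL_N$. Everything after this rank-one dictionary --- the convolution in part (b), and the descent in part (2) --- is essentially formal bookkeeping.
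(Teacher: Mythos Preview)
Your approach is correct and essentially matches the paper's: both reduce via Conjecture~\ref{conv} to the case $n=1$ and then to $T=\GG_m$ with $\rho$ a power of the standard character. The paper's proof consists of exactly those two reduction steps stated in two sentences; you have supplied the details (the Fubini identity in (b), the $\Lambda'$-support/descent argument for part~(2)) that the paper leaves implicit.
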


\begin{proof}
Conjecture \ref{conv} immediately reduces the statement to the case $n=1$. This case by itself immediately reduces to the case $T=\GG_m$, $\rho=k$-th power of the standard character of $T^{\vee}=\GG_m$.
\end{proof}

\section{Proof of Theorems \ref{main-intro}, \ref{main-intro-exp}}\label{iwahori}

\subsection{The spherical case}
\label{spherical case}
In this subsection we prove~Theorem~\ref{main-intro-exp}(1) and deduce from it
the spherical case of~Theorem~\ref{main-intro}. This is in fact immediate:
namely, it follows easily that for a representation $(V,\rho)$ of $G^\vee$ and for a
(left and right) $G(\sO)$-equivariant perverse sheaf $\CF$ on $G(\sF)$, the sheaf $\XF_{\rho,(n)}$
is equal to $\CF\star\sfS(\Sym^n(V))$ (where the convolution
is taken in the category of $G(\sO)$-equivariant sheaves). Hence
\begin{equation}\label{spherical}
f_{\CF,\rho}=f_\CF\star f_{\sfS(\Sym(V))}.
\end{equation}

To complete the proof of the spherical case in~Theorem~\ref{main-intro} we 
make a small digression on the normalization of
the Satake isomorphism. Recall (cf.~e.g.~\cite{Gross}) that we have a canonical isomorphism
$S\colon \sK_0(\Rep(G^{\vee}))\iso H(G(\sF),G(\sO)$ where $\sK_0(\Rep(G^{\vee}))$
stands for the complexified Grothendieck group of $\Rep(G^{\vee})$. Let $\eps$ be the automorphism of
$K_0(\Rep(G^{\vee}))$ which sends the class of a representation with highest weigh $\lambda\in \Lambda$
to itself multiplied
by $(-1)^{\langle \lambda,2\delta\rangle}$ (where $2\delta$ is the sum of positive roots of $G$, so that
$\eps$ is trivial if $G$ is simply connected).
Then it is easy to see that for any representation $(V,\rho)$ of $G^{\vee}$ we have
\begin{equation}\label{satake}
f_{\sfS(V)}= S(\eps([V])).
\end{equation}
It is clear now that (\ref{spherical}) and (\ref{satake}) together imply Theorem \ref{main-intro} for $K=G(\sO)$.

\subsection{Proof of Theorem \ref{main-intro}}\label{deduction}
We now deduce~Theorem~\ref{main-intro} in the Iwahori case $K=I$
from~Theorem~\ref{main-intro-exp}(2), or rather its more precise version Theorem \ref{Th1} below.

We denote by $H=H(G,I)=\Ce[I\bs G(\sF)/I]$ the Iwahori-Hecke algebra of $G$.

Let $\phi\in H^*$ be a matrix coefficient of an irreducible $H$-module $M$ with parameters $(s,e,\psi)$.

Recall that by~\cite{kl,CG} $M$ is realized as a quotient of the group $[K(\CB_e^s):\psi]$,
the isotypic component for the action of the components group of the centralizer of $e$.
Here $\CB$ is the flag variety of $G^\vee$, and $\CB_e^s$ is its fixed point subvariety with
respect to the action of $s$ and $e$.

It follows that the linear functional $\phi$ factors as a composition of
$i_e^*\colon K^{G^\vee\times \Ce^\times}(\St)\to K^{Z_e}(\CB_e^2)$ and a linear functional
$\bar{\phi}\in K^{Z_e}(\CB_e^2)^*$. It also factors through the quotient by the maximal ideal of the
center of $H$ corresponding to the semi-simple element $(s,q)\in G^\vee\times\BG_m$.

Consider now the expression \eqref{GJint}, in view of the isomorphism of Theorem \ref{main-intro-exp}(2), the element $c\in \calS_\rho^I$ there can be
assumed without loss of generality to correspond to the class $[\CE]$ for $\CE \in \on{Coh}^{Z_e}(\CB_e^2\times V^e)$. 

Let $\on{pr}$ denote
the natural projection $\CB^2_e\times V^e\to\CB^2_e$.
The direct image $\on{pr}_*(\CE)$ can be decomposed as a direct sum $\bigoplus_n \CE_n$, 
where the cocharacter $\chi^\svee$ of $G^\vee$ acts on $\CE_n$ by the character $t\mapsto t^n$
(notice that the corresponding action of $\Gm$ on $\St$ is trivial).

Using the more precise description of the isomorphism in Theorem \ref{Th1}, we conclude that
$c=\sum\limits_n c_n$ where $c_n\in  H(G,I)\cong K^{G^{\vee}\times \GG_m}(\St)_q$ (see Theorem \ref{main-intro-exp}(2))
 corresponds to the class $[\CE_n]$. It is clear also that
 $c_n$ is supported on $G(\sF)_n=\{g\in G(\sF)\ |\ val(\chi(g))=n\}$.

 Thus the right hand side of  \eqref{GJint} takes the form:
 \[\sum_n q^{ns} \bar{\phi}(c_n).\]

The group $K^{Z_e}(\CB_e^2)$ is acted upon by the ring $K(\Rep(Z_e))$, so $K^{Z_e}(\CB_e^2)(\!(t)\!)$
is a module over $K(\Rep(Z_e))(\!(t)\!)$.
The standard argument involving the Koszul complex shows that the element
$\sum t^n c_n\in K^{Z_e}(\CB_e^2)(\!(t)\!)$ lies in $(1-t[V^e])^{-1} K^{Z_e}(\CB_e^2)[t,t^{-1}]$.
Hence the Laurent series $f_\CE(t)=\sum t^n  \bar{\phi}(c_n)$ lies in $P(t)^{-1}\Ce[t,t^{-1}]$
where $P(t)=1-\on{Tr}(q^s,V^e)$.
It remains to check that for some $\CE$ the Laurent series $f_\CE(t)$ generates the fractional ideal
$P(t)^{-1}\Ce[t,t^{-1}]$.
Let $\iota\colon\CB_e^2\to\CB_e^2\times V^e$ be the zero section embedding. We can find $\CE$ such
$\iota^*(\CE)=\iota^*(\CE)_0$ (i.e.\ the cocharacter $\chi^\svee$ of $G^\vee$ acts on
$\iota^*(\CE)$ trivially) and  $\langle \bar{\phi},
[\iota^*\CE]\rangle \ne 1$. Then $P(t)f_\CE(t)=1$. The claim follows.

\subsection{Setup}
The remainder of this section is devoted to proving~Theorem~\ref{main-intro-exp}(2).
So we will deal with the case $K=I$: the Iwahori subgroup.
In what follows we denote by $H=\Ce[I\bs G(\sF)/I]$ the Iwahori-Hecke algebra of $G$ and
we shall write $H_{\rho}$ instead of $\calS_{\rho}(G,I)$.
The main goal of this Section is to give a ``spectral" definition of $H_{\rho}$ and to prove that it coincides with the one given in Section \ref{basic}.

\subsection{Spectral side}
In order to introduce the spectral version of $H_\rho$ we recall that
\begin{equation}\label{Hisom}
\sH\otimes_{\Zet[\bq,\bq^{-1}]} \Ce \cong H,
\end{equation} where
$\sH$ is the affine Hecke algebra and the homomorphism $\Zet[\bq,\bq^{-1}] \to \Ce$ sends $\bq$ to $q^{-1}$.
Notice that a more standard choice a such an isomorphism sends $\bq$ to $q$. The isomorphism \eqref{Hisom}
coincides with that  isomorphism pre-composed with the Kazhdan-Lusztig involution of $\sH$; see Remark \ref{rem_costalk}
for a justification of that choice.


Furthermore,
$\sH\cong K(\Coh^{\LG\times \Gm}(\on{St}))$ where $\on{St}=\Nt\times_{\Lg} \Nt$ is the Steinberg
variety\footnote{Notice that several related constructions require considering $\on{St}$ as a
derived scheme but this does not affect the Grothendieck group.
In particular, we have $\sH=K(\Coh^{\LG\times \Gm}(\St))$ where  $\St=\gt\times_{\fg^\vee}\Nt$
is another scheme with the same reduced variety and the same Grothendieck group of coherent
sheaves. Since $\on{Tor}_{>0}^{\O(\Lg)}(\O(\gt), \O(\Nt))=0$ by a complete intersection argument,
the distinction between the derived and the ordinary scheme is immaterial in the case of $\St$.}
and $\bq$ is the standard generator of $K(\Rep(\Gm))$.
Note that isomorphism \eqref{Hisom} can be described as follows. One has a natural
map from $K(\Coh^{\LG\times \Gm}(\on{St}))$ to the Grothendieck group of Weil
sheaves on the affine flag variety (see \cite{BHecke}, especially \S 11.1),
then \eqref{Hisom} equals composition of that isomorphism with the map taking
the class of a Weil sheaf $\CF$ to the function of the trace of the {\em arithmetic} Frobenius
acting on the {\em costalks} of $\CF$; while the standard isomorphism sending
$\bq$ to $q$ corresponds to taking traces of geometric Frobenius on stalks of
the Weil sheaf.

Observe also that $H$ is $\BZ$-graded by the cocharacter $\chi^\svee\colon H=\bigoplus_{d\in\BZ}H_d$,
and we consider the corresponding completion $\widehat{H}:=\prod_{d\in\BZ}H_d$.

Let $Z_\rho\subset \St\times V^*$  be the closed subscheme parametrizing the quadruples
$(x,\fb_1,\fb_2,\xi)\in \Lg \times \CB\times\CB \times V^*$ such that
$x(\xi)=0$ (here $\CB$ is the flag variety of $G^\vee$). Then $Z_\rho$ is acted upon by
$\Gm\colon s\cdot(x,\fb_1,\fb_2,\xi)=(sx,\fb_1,\fb_2,\xi)$. We let
$\sH_\rho^\spec=K^{G^\vee\times \Gm}(Z_\rho)$ and
 $H_\rho^\spec=\sH_\rho^\spec/(\bq - q)$.

\subsection{Geometric side}
We now sketch the geometric description of the space $H_\rho$ outlined in~\S\ref{basic}
specifically in the Iwahori case. Since the representation $(V,\rho)$ of $G^\vee$ is fixed
throughout this Section, we will remove $\rho$ from the subscript of $\XF_{\rho,(n)}$.

Let $\sfS\colon \Rep(\LG)\to \Perv_{G_O}(\Gr_G)$ be the geometric Satake equivalence and
$\sZ\colon \Rep(\LG)\to \Perv_I(\Fl)$ be its composition with Gaitsgory's central functor~\cite{g}.

Let $\F\in \Perv_I(\Fl)$ be an  $I$-equivariant perverse sheaf on $\Fl$. We define a collection
$\bF =(\F_{(n)})$, $\F_{(n)}\in D_I(\Fl)$ as follows.
For every $n\geq 0$ consider the ramified version of Beilinson-Drinfeld Grassmannian $\Gr_n$ on a global
curve $C$ with a marked point $x_0\in C$. Thus $\Gr_n$
maps to $C^n$ where the fiber over $(x_1,\dots, x_n)$ is identified with $\Fl\times \Gr_G^n$ if
$(x_0,x_1,\dots,x_n)$ are pairwise distinct, while the fiber over
$(x_0,\dots, x_0)$ is identified with $\Fl$. We set:
\begin{equation}\label{Fndef}
\CF_{(n)}=(\imath^!\jmath_{!*}(\CF^\dagger_n))^{S_n}[n],
\end{equation} where
$\jmath$ is the embedding of the preimage of the complement to diagonals
in $(C\setminus \{ x_0\})^n$,
 $\F^\dagger_n$ is the sheaf on that preimage whose restriction to a generic fiber is identified
with $\F \boxtimes \sfS(V)\boxtimes \sfS(V)\boxtimes \cdots \boxtimes \sfS(V)$ and $\imath$
is the embedding of the preimage of $(x_0)^n$.

For $w\in W_\aff$, the irreducible $I$-equivariant perverse sheaf $\F=\IC_w$ carries a unique up to
scaling Weil structure. Fixing one
and using  a standard Weil structure (of weight zero)
on $\sfS(V)$ we get a Weil structure on $\F_{(n)}$.
Let $f_n^w$ be the function whose value at a point $x\in\Fl$ is the
 trace of the {\em arithmetic} Frobenius on the {\em costalk} of $\F_{(n)}$ at $x$.
 We set
$f^w = \suml_{n=0}^\infty f_n^w$ (notice that for every $d$ we have $f_n^w|_{G(\sF)_d}=0$
for almost all $n$, so $f^w\in \widehat{H}$ is well defined. Here $G(\sF)_d$ stands for the preimage
under $\chi$ of elements of $\BG_m(\sF)=\sF^\times$ of valuation $d$).

Notice that $(\F^\dagger_n)^{S_n}$ is Verdier self-dual, thus $\CF_{(n)}$ is  Verdier dual to
the sheaf $\XF_n$ defined in \eqref{XF_n}; thus  $f^w=f_{\IC_w,\rho}$ where the RHS
was defined in~\eqref{XF_def} (the need to introduce this Verdier dual description of the
function is explained in~Remark~\ref{rem_costalk}).

Let $H_\rho$ be the subspace in $\widehat{H}$ 
spanned by $f^w$, $w\in W_\aff$.

\subsection{Comparison}
Theorem \ref{main-intro-exp}(2) amounts to an isomorphism 
 of $H$-bimodules
 \begin{equation}\label{geom_to_spec_isom}
 H_\rho^\spec\cong H_\rho
\end{equation}
A  more precise version of this statement is as follows.

For an object $\CE\in \Coh^{G^\vee\times \Gm}(Z_\rho)$ consider $\pr_*(\CE) \in \QCoh^{G^\vee\times \Gm}(\St)$
where $\pr\colon Z_\rho\to \St$ is the projection.
The action of the multiplicative group $\chi^\svee(\Gm)\subset G^\vee$ on $\St$ is trivial, so the
object $\pr_*(\CE)\in  \QCoh^{G^\vee\times \Gm}(\St)$ splits canonically as
$\pr_*(\CE)=\oplusl\pr_*(\CE)_n$ where the multiplicative group $\chi^\svee(\Gm)\subset G^\vee$ acts
on the $n$-th summand via the $n$-th power of the tautological character.

Using  \eqref{Hisom} together with the isomorphism
$\sH\cong K^{G^\vee\times \Gm}(\St)$ we can associate to
$\CG\in D^b(\Coh^{\LG\times \Gm}(\St))$ an element $\tau(\CG)\in H$.
Now, for $\CE\in \Coh^{G^\vee\times \Gm}(Z_\rho)$ we set $\varsigma(\CE)=\suml_{n\in \Zet} \tau(\pr_*(\CE)_n)$.
It is easy to see that for every $d$ we have $\varsigma(\CE)_n|_{G(\sF)_d}=0$ for almost all $n$,
so $\varsigma(\CE)\in \widehat{H}$ is well defined. This defines a map  from $\sH_\rho^\spec$ to
$\widehat{H}$, it clearly factors through $H_\rho^\spec$.

It is easy to check that the resulting map $\varsigma\colon  H_\rho^\spec\to \widehat{H}$ is injective;
we will identify $H_\rho^\spec$ with its image under $\varsigma$.

\begin{thm}\label{Th1}
The subspaces  $H_\rho^\spec$, $H_\rho$ in $\widehat{H}$ coincide.
\end{thm}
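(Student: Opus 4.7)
The plan is to compare the two subspaces via Bezrukavnikov's coherent realization of the affine Hecke category, namely an equivalence $\Phi\colon D^b(\Coh^{G^\vee\times\Gm}(\St))\isoto D^b_I(\Fl)$ refining the isomorphism $\sH\simeq K^{G^\vee\times\Gm}(\St)$ and intertwining the K-theoretic class map $\tau$ with the trace-of-Frobenius-on-costalks function of Section \ref{basic}. The decisive input from this equivalence is that Gaitsgory's central sheaf $\sZ(V)$ corresponds under $\Phi$ to the coherent object $V\otimes\CO_{\St}$ (with the appropriate cohomological shift and Tate twist), where $V$ is viewed as a $G^\vee\times\Gm$-representation with $\Gm$ acting trivially. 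This compatibility of $\Phi$ with central sheaves is the technical heart of the comparison.

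\textbf{Geometric side.} I interpret $\CF_{(n)}$ through nearby cycles on the ramified Beilinson--Drinfeld Grassmannian. The degeneration as the points $x_1,\dots,x_n$ collide with $x_0$ realizes $\imath^!\jmath_{!*}(\CF^\dagger_n)$ (up to the Verdier duality noted in Remark \ref{rem_costalk}) as the iterated convolution of $\CF$ with $n$ copies of $\sZ(V)$. Taking the $S_n$-symmetrization in \eqref{Fndef} and applying $\Phi^{-1}$ produces the coherent object $\Phi^{-1}(\CF)\otimes\Sym^n(V)$, and therefore
\[f^w=\sum_{n\ge 0}\tau\bigl(\Phi^{-1}(\IC_w)\otimes\Sym^n(V)\bigr).\]
Convergence in $\widehat H$ follows from the positivity of $\chi^\svee$ on $V$, which forces the $n$-th summand to be supported in strictly increasing components of the $\chi^\svee$-grading on $G(\sF)$.

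\textbf{Spectral side.} I realize $Z_\rho\subset\St\times V^*$ as the zero locus of the canonical section of $V\otimes\CO_{\St\times V^*}$ obtained from the projection $\St\to\gvee$ and the action map $\gvee\otimes V^*\to V^*$. The associated Koszul resolution, pushed forward along $\pr\colon Z_\rho\to\St$, yields a $\chi^\svee$-graded identity
\[\pr_*\CO_{Z_\rho}\;\simeq\;\bigoplus_{n\ge 0}\Sym^n(V)\otimes\CO_{\St}\quad\text{in }K^{G^\vee\times\Gm}(\St),\]
with the $n$-th summand lying in $\chi^\svee$-degree $n$. Taking $\CE=\Phi^{-1}(\IC_w)\otimes\CO_{Z_\rho}$, whose classes generate $K^{G^\vee\times\Gm}(Z_\rho)$ as a module over $K^{G^\vee\times\Gm}(\St)$ as $w$ ranges over $W_\aff$, this gives $\pr_*(\CE)_n=\Phi^{-1}(\IC_w)\otimes\Sym^n(V)$ and hence $\varsigma([\CE])=\sum_n\tau(\Phi^{-1}(\IC_w)\otimes\Sym^n(V))$.

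\textbf{Conclusion and main obstacle.} Matching the two formulas term-by-term gives $\varsigma([\CE])=f^w$; taking linear combinations and using that the $\IC_w$ span $H$ identifies $H_\rho^\spec$ with $H_\rho$ inside $\widehat H$. The principal difficulty is establishing and applying the identification $\Phi(V\otimes\CO_{\St})\simeq\sZ(V)$ with all shifts, Tate twists, and Weil structures matching on the nose --- this uses the Arkhipov--Bezrukavnikov-style description of $\Phi$ on central objects. A secondary but genuine obstacle is the careful bookkeeping: Verdier duality converts the ``costalk'' recipe for $\tau$ (see Remark \ref{rem_costalk}) into the ``stalk'' formula used for $f_{\CF_{(n)}}$, the $S_n$-symmetrization must be reconciled with the $\Sym^n(V)$ appearing in the Koszul resolution, and the global $\sgn$-twist must be accounted for. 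Once these compatibilities are aligned, the spectral and geometric generators coincide and Theorem \ref{Th1} follows.
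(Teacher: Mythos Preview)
Your proposal has a genuine gap at its two central steps, and the errors are mirror images of one another.

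On the geometric side, you assert that $\imath^!\jmath_{!*}(\CF^\dagger_n)$ is (after symmetrizing) $\CF\star\sZ(\Sym^n V)$. This conflates the corestriction $\imath^!$ with iterated nearby cycles $\psi$. Proposition~\ref{nadler}(b) does give $\psi\wt\CF_n\cong\CF\star(\sZ V)^{\star n}$, but $\imath^!\wt\CF_n$ is a different object: Theorem~\ref{calcul} identifies it with the complex $_n\sC_{!*}^\bullet$, the \emph{image} of a map of Koszul-type complexes built from the logarithm-of-monodromy operators $s_i$ on $\sZ V$. Already for $n=1$ one has $[\imath^!\jmath_{!*}]=[\psi]-[\on{Im}(s)]$ in the Grothendieck group, not $[\psi]$. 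Your formula would be correct only when the monodromy $s$ on $\sZ V$ vanishes, which is essentially the spherical situation of~\S\ref{spherical case} and fails in the Iwahori case.

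On the spectral side, the claimed identity $[\pr_*\CO_{Z_\rho}]=\sum_n[\Sym^n(V)\otimes\CO_{\St}]$ is likewise false: $Z_\rho$ is not a complete intersection in $\St\times V^*$, so the Koszul complex is not a resolution of $\CO_{Z_\rho}$. Over a point $(x,\fb_1,\fb_2)\in\St$ the fibre of $\pr$ is $\Ker(x|_{V^*})$, so $(\pr_*\CO_{Z_\rho})_n$ has class $[\Sym^n(V_x)]$ (coinvariants), not $[\Sym^n(V)]$. Thus $\Phi^{-1}(\IC_w)\otimes\CO_{Z_\rho}$ does not have the $\varsigma$-image you claim, and there is no reason these classes span $K^{G^\vee\times\Gm}(Z_\rho)$.

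The paper's route is precisely designed to handle this: it packages $\bigoplus_n\CF_{(n)}$ with its $\Sym^\bullet\sZ V$-module structure, transports it through the equivalence to obtain an object $\CE^w$ on $\St\times V^*$, and then proves (Theorem~\ref{Th2}, via Corollary~\ref{free support} and Proposition~\ref{abc}) that $\CE^w$ is supported on $Z_\rho$ and that the $[\CE^w]$ freely generate $K^{G^\vee\times\Gm}(Z_\rho)$. The monodromy $s$ on the constructible side matches the nilpotent endomorphism $x$ on the coherent side, and this matching --- not a naive Koszul identity --- is what makes the two descriptions of $f^w$ agree.
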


Theorem \ref{Th1} will be deduced from the following stronger statement.

Let again $\F=\IC_w$ and consider the collection of objects $\F_{(n)}\in D_I(\Fl)$. Using the definition of $\F_{(n)}$ it is not hard to define a collection of morphisms
$\sZ(V)*\F_{(n)}\to \F_{(n+1)}$ which yield an action of the algebra $S_V=\oplusl_n \sZ(\Sym^n(V))$ in the (ind completion of) the monoidal category $D_I(\Fl)$
on the (ind) object $\oplusl_n \F_{(n)}$. Applying the equivalence of \cite{BHecke} to that data we get an object $\oplusl \Phi(\F_{(n)})\in \QCoh^{\LG}(\St)$ together with
an action of the algebra $\Sym(V)\otimes \O_\St$ in $\QCoh^\LG(\St)$.
We will see below that this datum admits a natural lift to an object in $D^b\Coh^\LG (\St\times V^*)$,
we denote that object by $\CE^w$.

\begin{rem}\label{rem_costalk}
Construction of the   morphisms
$\sZ(V)*\F_{(n)}\to \F_{(n+1)}$  yielding an action of  $S_V$
requires the specific definition of $\F_{(n)}$ as in \eqref{Fndef}. From some perspective
the Verdier dual objects $\XF_{(n)} =(\imath^*\jmath_{!*}(\F^\dagger_n))^{S_n}[n]$ introduced
in~\eqref{XF_n} are more natural, but the structure they give rise to is that of a comodule over the
coalgebra dual to $S_V$. That choice forces us to work with costalks and arithmetic Frobenii
when associating a function to a sheaf. While somewhat nonstandard, that way to produce a function
from an $\ell$-adic sheaf has appeared in the literature, see, in particular, \cite[\S A.1.2]{DrW}.
\end{rem}

\begin{thm}\label{Th2}
a) The object $\CE^w$ is (set theoretically)
supported on $Z_\rho\subset \St\times V^*$.

b) The objects $\CE^w$ generate $D^b\Coh_{Z_\rho}(\St\times V^*)$ as a triangulated category. Their classes
freely generate the Grothendieck group.
\end{thm}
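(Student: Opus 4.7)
My plan is to establish part (a) by relating the $\Sym(V)$-module structure on $\CE^w$ to the Gaitsgory--Bezrukavnikov identification of the logarithm of monodromy with the tautological nilpotent action on $V$, and to prove part (b) by a devissage along Bezrukavnikov's coherent realization of the affine Hecke category. The starting point for part (a) is the following identification. Under the equivalence $\Phi: D^b_I(\Fl)\iso D^b\Coh^{\LG\times\Gm}(\St)$, the central sheaf $\sZ(V)$ corresponds to the coherent sheaf $V\otimes\O_\St$ with diagonal $\LG$-action, and Gaitsgory's logarithm of monodromy $N_V: \sZ(V)\to\sZ(V)(-1)$ corresponds to the endomorphism $v\otimes f\mapsto (xv)\otimes f$, where $x$ is the tautological section of $\Lg\otimes\O_\St$ and $v\mapsto xv$ is induced by $\rho$. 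Passing to $S_V=\oplus_n\sZ(\Sym^n V)$, the algebra becomes $\Sym(V)\otimes\O_\St=\O_{\St\times V^*}$, and the $S_V$-action on $\oplus_n\F_{(n)}$ becomes the $\O_{\St\times V^*}$-module structure defining $\CE^w$.

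The support claim in (a) is then that the sections $xv\in V\otimes\O_\St\subset\O_{\St\times V^*}$ (for $v\in V$), which generate the ideal of $Z_\rho$, annihilate $\CE^w$. This I plan to deduce from the interaction of the $S_V$-action with monodromy: for each $n$, the map $\sZ(V)*\F_{(n)}\to\F_{(n+1)}$ intertwines the monodromy of $\sZ(V)$ (which under $\Phi$ is multiplication by $x$ on $V$) with the monodromy contributed to $\F_{(n+1)}$ by the specialization of the new factor; the $!$-restriction defining $\F_{(n+1)}$ kills precisely this monodromy contribution on the target, forcing the elements $xv$ to act trivially on $\CE^w$. Combined with the identification of the ideal of $Z_\rho$, this yields $\supp(\CE^w)\subseteq Z_\rho$.

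For part (b), the plan is to exploit the projection $\pr:Z_\rho\to\St$, under which $\pr_*\CE^w\cong\oplus_n\Phi(\F_{(n)})$ has degree-zero summand $\Phi(\IC_w)$. By~\cite{BHecke}, the objects $\{\Phi(\IC_w)\}_{w\in W_\aff}$ generate $D^b\Coh^{\LG\times\Gm}(\St)$ and their classes form a basis of the Grothendieck group. A devissage along the $\Sym(V)$-grading (equivalently, along the $\Gm$-action scaling $V^*$) reduces generation of $D^b\Coh_{Z_\rho}^{\LG\times\Gm}(\St\times V^*)$ to generation of $D^b\Coh^{\LG\times\Gm}(\St)$. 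For the freeness of $K$-classes, the natural filtration of $\CE^w$ by $\Sym(V)$-degree has associated graded identified (after applying $\pr^*$ and invoking an explicit Koszul resolution reflecting the defining equations of $Z_\rho$) with a twist of $\pr^*\Phi(\IC_w)$, yielding the Grothendieck-group isomorphism $[\Phi(\IC_w)]\mapsto[\CE^w]$.

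The main obstacle will be making the commutator identity in part (a) fully precise: while the qualitative picture ``monodromy corresponds to the tautological nilpotent'' is well-established, verifying the exact compatibility of the monodromy structure on the BD-specialization $\F_{(n)}$ with the $S_V$-action (so as to force the full system of relations $xv\cdot\CE^w=0$, and not a strict subset of them) requires careful bookkeeping of nearby cycles in several variables and their interplay with the Satake functor $\sfS$.
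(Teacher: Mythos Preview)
Your plan for part (a) is close to the paper's approach and morally correct: the support on $Z_\rho$ does ultimately reduce to the identification of the monodromy $s$ on $\sZ(V)$ with the tautological nilpotent action of $x$ on $V\otimes\O_\St$. The ``main obstacle'' you flag --- making precise how the $\imath^!\jmath_{!*}$ construction interacts with this monodromy --- is exactly what the paper resolves via an explicit computation (Theorem~\ref{calcul}): the corestriction $\imath^!\jmath_{!*}\CF^\dagger_n$ is identified with the image, termwise in the abelian category $\Perv_I(\Fl)$, of a map of Koszul-type complexes $\CF\star\sK_s^\bullet\to\CF\star\sK_1^\bullet$. Since the source $\CF\star\sK_s^\bullet=\CF\star\Sym^\bullet(\sZ V\xrightarrow{s}\sZ V(-1))$ is visibly annihilated by $s$, so is the image (Corollary~\ref{free support}). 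Your phrasing ``the $!$-restriction kills this monodromy contribution'' is not accurate on its own for $\jmath_{!*}$ (as opposed to $\jmath_!$); the vanishing comes from the image description, not directly from corestriction.

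Your plan for part (b) has a genuine gap. You assert that the associated graded of $\CE^w$ by $\Sym(V)$-degree is ``a twist of $\pr^*\Phi(\IC_w)$'', via a Koszul resolution. But the explicit description just mentioned shows this is not so: $\CE^w$ is the \emph{image in the perverse coherent $t$-structure} of a map whose source is $\Phi(\IC_w)$ tensored with the Koszul complex for $Z_\rho\subset\St\times V^*$; it is not that Koszul complex itself, and hence not $\pr^*\Phi(\IC_w)\otimes^L\O_{Z_\rho}$. The graded pieces $\Phi(\CF_{(n)})$ for $n>0$ have no simple relation to $\Phi(\IC_w)$, so your devissage by $\chi^\svee$-weight does not reduce the problem to generation on $\St$. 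Relatedly, $Z_\rho\to\St$ is not a vector bundle (the fiber dimension jumps across nilpotent orbits), so there is no global Thom isomorphism $K(\St)\cong K(Z_\rho)$ to invoke.

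The paper's argument for (b) proceeds instead by d\'evissage along the nilpotent orbit stratification, using two additional inputs from \cite{BHecke}: that $\Phi(\IC_w)$ is supported on the closure of a specific orbit $O_w$, and that the equivalence is compatible (up to a shift by $\tfrac12\codim O_w$) with the perverse coherent $t$-structure on each stratum. This yields a clean description of $\CE^w$ restricted to the open stratum $O_w$ (Proposition~\ref{abc}(c)): it is the pullback of the irreducible bundle $\EE_{L_w}$ along the projection $Z_\rho|_{O_w}\to\St|_{O_w}$, which \emph{is} a vector bundle over each orbit. Generation and freeness then follow by the standard filtration argument. The key point your approach misses is that the relation between $\CE^w$ and $\Phi(\IC_w)$ is only transparent orbit-by-orbit, not globally on $\St$.
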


Theorem \ref{Th2} shows that the elements $\varsigma(\CE^w)$ span $H_\rho^{spec}$. On the other hand,
it is clear from the definitions that $\varsigma(\CE^w)=f^w$, thus \ref{Th2} implies \ref{Th1}.
The rest of this paper is dedicated to the proof of Theorem \ref{Th2}.

\begin{rem}
  In fact, $\CE^w$ is the direct image under closed embedding of a naturally defined coherent complex on
  the {\em derived scheme} $Z_\rho$. This suggests
that the derived coherent category of that scheme is a natural categorification of $H_\rho$.
\end{rem}


In section~\ref{BD} we recall necessary facts about the Beilinson-Drinfeld global Grassmannian and
its ramified version, explain details of the definition of $\bF$ and describe it  explicitly without
making a direct reference to the global Grassmannian (but using the central sheaves
whose definition relies on it). In section \ref{proofs_RB} we use some properties of the equivalence
between the constructible and the coherent categorifications
of $H$ \cite{BHecke} 
 to prove Theorem \ref{Th2}.

\section{Intersection cohomology costalks on the global affine Grassmannians}
\label{BD}

\subsection{Global affine Grassmannians}
\label{global}
Let $C$ be a smooth curve over $\ol\BF_q$ defined over $\BF_q$. Let $x_0\in C$ be a point
defined over $\BF_q$. For example, we can (and will) take $C=\BA^1\ni0=x_0$.

Recall the morphism of moduli spaces (ind-schemes)
$\mu\colon\ul\Gr_\CG(\ul\sx,\ul\sy)\to\ul\Gr_\CG(\ul\sx\cup\ul\sy)$ introduced in~\cite[3.5.2]{ar}.
Taking $n+1$ points instead of two, we define the proper morphism of ind-schemes
$\mu\colon\ul\Gr_\CG(\ul\sx{}_0,\ul\sx{}_1,\ldots,\ul\sx{}_n)\to
\ul\Gr_\CG(\ul\sx{}_0\cup\ul\sx{}_1\cup\ldots\cup\ul\sx{}_n)$. Note that Achar and Riche work over
the base field $\BC$, but the same definition works over $\ol\BF_q$ (cf.~\cite[5.3]{ar}), and
we will work in this setup. So $\ul\Gr_\CG(\ul\sx{}_0\cup\ul\sx{}_1\cup\ldots\cup\ul\sx{}_n)$
and the other ind-schemes introduced below are $\ol\BF_q$-ind-schemes defined over $\BF_q$.

By construction, $\ul\Gr_\CG(\ul\sx{}_0\cup\ul\sx{}_1\cup\ldots\cup\ul\sx{}_n)$ is equipped with
a projection $\pi$ to $C^{n+1}$. We have a closed embedding $C^n\hookrightarrow C^{n+1},\
(x_1,\ldots,x_n)\mapsto(x_0,x_1,\ldots,x_n)$. We define the {\em ramified Beilinson-Drinfeld
  Grassmannian} $\Gr_n$ as the preimage $\pi^{-1}(C^n)$. We also define the {\em ramified
  Beilinson-Drinfeld convolution diagram} $\wt\Gr_n$ as the preimage $\mu^{-1}(\Gr_n)$.
So we have a proper morphism $\mu\colon\wt\Gr_n\to\Gr_n$.

Following~\cite[3.5.5]{ar}, we denote by $C^{n\dagger}\subset(C\setminus\{x_0\})^n\subset C^n$
the open subset formed by all the multiplicity free configurations of points distinct from $x_0$.
Similarly to~\cite[3.5.5.5]{ar}, we have a canonical isomorphism
$\Gr_n|_{C^{n\dagger}}\cong\Fl\times\Gr_G^n\times C^{n\dagger}$ (recall that $C=\BA^1$).
Similarly to~\cite[3.5.5.4]{ar}, the special fiber
$\Gr_n|_{n\cdot x_0}$ is canonically isomorphic to $\Fl$.
The open embedding $\pi^{-1}(C^{n\dagger})\hookrightarrow\Gr_n$ is denoted by $\jmath$.
The closed embedding $\pi^{-1}(n\cdot x_0)\hookrightarrow\Gr_n$ is denoted by $\imath$.

More generally, for any subset $J\subset\{1,\ldots, n\}$ we denote by $C^n_J$ a locally closed
subvariety $\prod_{i\not\in J}(C_i\setminus\{x_0\})\times\prod_{j\in J}(x_0)_j\subset C^n$ of $C^n$.
The open subvariety formed by all multiplicity free configurations in $C^n_J$ (i.e.
$x_i\ne x_{i'}$ for $i,i'\not\in J,\ i\ne i'$) is denoted by $C^{n\dagger}_J$. The open embedding
$\pi^{-1}(C^{n\dagger}_J)\hookrightarrow\pi^{-1}(C^n_J)$ is denoted by $u^J$.

We will denote the locally closed subvariety $\pi^{-1}(C^n_J)$ of $\Gr_n$ by $Y_J$ for short.
The closed embedding $\ol{Y}\!_J\hookrightarrow Y=\Gr_n$ is denoted by $\imath_J$. In particular,
$\imath=\imath_{\{1,\ldots,n\}}$.

\subsection{Corestriction}
Recall that $\sfS\colon \Rep(\LG)\to \Perv_{G_O}(\Gr_G)$ stands for the geometric Satake equivalence
and $\sZ\colon \Rep(\LG)\to \Perv_I(\Fl)$ stands for its composition with Gaitsgory's central functor,
see e.g.~\cite[2.4.5, 5.3.4]{ar}. An irreducible perverse sheaf $\CF=\IC_w\in\Perv_I(\Fl)$ has a
canonical structure of a mixed sheaf pure of weight 0.
For $V\in\Rep(\LG),\ \sfS(V)$ also has a canonical structure of a mixed sheaf pure of weight 0.
We consider an irreducible perverse sheaf
$\CF^\dagger_n:=\CF\boxtimes\sfS(V)^{\boxtimes n}\boxtimes\ol\BQ_\ell[n](n/2)$
(pure of weight 0) on $\Fl\times\Gr_G^n\times C^{n\dagger}\cong\Gr_n|_{C^{n\dagger}}$.
We set $\CF_n:=\imath^!\wt\CF_n$, where $\wt\CF_n:=\jmath_{!*}\CF^\dagger_n$.

We will compute $\CF_n$ in terms of the mixed central sheaf $\sZ V$ and the nilpotent logarithm
of monodromy operator $s\colon\sZ V\to\sZ V(-1)$. Also we will need the following symmetrized version.
The symmetric group $S_n$ acts in an evident way on $C^n,\ \Gr_n$, and $\wt\CF_n$ carries an evident
equivariant structure. Hence $S_n$ acts on $\imath^!\wt\CF_n$, and we set
$\CF_{(n)}:=(\imath^!\wt\CF_n)^{S_n}[n]$. The main result of~\S\ref{BD} used crucially
in~\S\ref{proofs_RB} for the proof of~Theorem~\ref{Th2} is the following property of
$\oplus_n\CF_{(n)}$~(Corollary~\ref{free support} below): $\oplus_n\CF_{(n)}$ is canonically
quasiisomorphic to a certain Koszul complex that acquires a natural action
of $\Sym^\bullet\sZ V$; moreover, it is a free $\Sym^\bullet\sZ V$-module with cohomology
supported on $\Ker s$.

We start with the computation of $\CF_n$. Namely, let $t_1,\ldots,t_n$ be the coordinate
functions on $C^n=\BA^n$. They can (and will) be viewed as functions on $\Gr_n$. The zero divisor
of $t_i$ in $\Gr_n$ is identified with $\Gr_{n-1}$.
The unipotent nearby cycles functor $\psi_{t_i}\colon \Perv(\Gr_n)\to\Perv(\Gr_{n-1})$ is denoted by
$\psi_i$ for short.

\begin{prop}
  \label{nadler}
  \textup{(a)} For any permutation $\chi\in\fS_n$ we have a canonical isomorphism
  \[\psi_n\psi_{n-1}\cdots\psi_2\psi_1\wt\CF_n\cong
  \psi_{\chi_n}\psi_{\chi_{n-1}}\cdots\psi_{\chi_2}\psi_{\chi_1}\wt\CF_n\in\Perv_I(\Gr_0)=\Perv_I(\Fl).\]
  More precisely, inside the category $\Perv_I(\Fl)$ there is a contractible groupoid whose objects
  include all the possible iterations
  $\psi_{\chi_n}\psi_{\chi_{n-1}}\cdots\psi_{\chi_2}\psi_{\chi_1}\wt\CF_n$.
  The canonically defined object of $\Perv_I(\Fl)$ is denoted by $\psi\wt\CF_n$.
  It is equipped with commuting nilpotent logarithms of monodromy operators
  $s_1,\ldots,s_n\colon\psi\wt\CF_n\to\psi\wt\CF_n(-1)$.

  \textup{(b)} We have a canonical isomorphism $\psi\wt\CF_n\cong\CF\star(\sZ V)^{\star n}$, and
  $s_i$ coincides with the action of $s$ along the $i$-th copy of $\sZ V$.
\end{prop}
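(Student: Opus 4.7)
The plan is to combine three ingredients: Beilinson's construction of unipotent nearby cycles with commuting monodromy along independent coordinate divisors (for part (a)), Gaitsgory's theorem~\cite{g} identifying the unipotent nearby cycles in the one-point ramified setting with the central functor $\sZ$, and factorization of the Beilinson-Drinfeld Grassmannian (for part (b)). The setup of ramified Beilinson-Drinfeld Grassmannians developed in~\cite{ar,ar2} provides the framework throughout.

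For part (a), the coordinates $t_1,\ldots,t_n$ on $C^n=\BA^n$ pull back to $\Gr_n$ as functions whose zero loci $D_1,\ldots,D_n$ are smooth Cartier divisors with common intersection equal to the central fiber $\Fl$. By Beilinson's bivariable formalism for unipotent nearby cycles, for any perverse sheaf on $\Gr_n$ (in particular $\wt\CF_n=\jmath_{!*}\CF^\dagger_n$) the iterated unipotent nearby cycles along these divisors in any order are canonically isomorphic perverse sheaves on $\Fl$, and the collection of all such iterates together with the canonical commutativity natural transformations forms a contractible groupoid inside $\Perv_I(\Fl)$. This yields the canonically defined object $\psi\wt\CF_n$; the commuting logarithms $s_1,\ldots,s_n$ arise as the monodromy of unipotent nearby cycles along the successive divisors and commute because the divisors are independent.

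For part (b), I would argue by induction on $n$. The base case $n=1$ is exactly Gaitsgory's theorem: $\psi_1\wt\CF_1\cong\CF\star\sZ V$, with $s_1$ coinciding with the monodromy $s$ on the rightmost $\sZ V$ factor. For the inductive step, factorization of $\Gr_n$ identifies, locally in the $t_n$-direction over the open stratum where $x_n$ is distinct from $x_1,\ldots,x_{n-1},x_0$ and the other $x_i$ stay away from $x_0$, the ind-scheme $\Gr_n$ with a product of the one-point ramified family (for $x_n$ approaching $x_0$) and the $(n-1)$-point family underlying $\Gr_{n-1}$. Since $\wt\CF_n=\jmath_{!*}\CF^\dagger_n$ is the intermediate extension of a factorizable object, it is compatible with this product decomposition, and applying $\psi_n$ yields $\wt\CF_{n-1}\star\sZ V$ with $s_n$ acting on the rightmost $\sZ V$ factor. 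Iteratively applying $\psi_{n-1},\ldots,\psi_1$ and invoking the inductive hypothesis on $\wt\CF_{n-1}$ (together with the fact that convolution with $\sZ V$ preserves $\Perv_I(\Fl)$ and commutes with unipotent nearby cycles in the remaining variables, since those variables concern a disjoint part of the curve) gives $\psi\wt\CF_n\cong\CF\star(\sZ V)^{\star n}$ with each $s_i$ acting on the corresponding factor. The statement for an arbitrary permutation then follows from part (a).

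The principal technical obstacle is the compatibility of the intermediate extension $\jmath_{!*}$ with factorization of $\Gr_n$. One must verify that the local product structure of $\Gr_n$ along the stratum where $x_n$ separates from the other points identifies $\wt\CF_n$ with the exterior product of $\wt\CF_{n-1}$ and the one-point IC-sheaf, so that the rightmost convolution factor truly decouples under $\psi_n$. This is exactly what is handled in~\cite{ar2}, building on the factorization apparatus for ramified global Grassmannians; the argument combines smooth base change for the intermediate extension with the factorization structure of $\Gr_n$ and the exactness of convolution with central sheaves on $\Perv_I(\Fl)$.
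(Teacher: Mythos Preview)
Your proposal is essentially aligned with the paper's own treatment: the paper's proof consists entirely of citations to~\cite{ar} (for $n=2$) and~\cite{ar2} (for general $n$), and your sketch is a reasonable outline of what those references establish, correctly identifying Gaitsgory's theorem and factorization as the key inputs for part~(b).

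There is, however, one genuine gap in your argument for part~(a). You write that ``by Beilinson's bivariable formalism for unipotent nearby cycles, for any perverse sheaf on $\Gr_n$ \ldots\ the iterated unipotent nearby cycles along these divisors in any order are canonically isomorphic.'' This is not true in general: commutation of iterated nearby cycles along distinct divisors is \emph{not} a formal consequence of Beilinson's construction, and can fail even for smooth divisors meeting transversally if the sheaf has bad singularities relative to the stratification. The existence of the contractible groupoid of iterates is precisely the nontrivial content of~\cite{ar2} (see also~\cite{n} for a microlocal criterion), and depends on the specific geometry of the ramified Beilinson--Drinfeld Grassmannian and the particular sheaf $\wt\CF_n$. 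Indeed, the paper's acknowledgments state that~\cite{ar2} was written expressly to supply this proof. So your sketch for~(a) should not invoke a general Beilinson statement but rather the specific commutation result proved in~\cite{ar2}; once that is granted, the rest of your outline (including the inductive factorization argument for~(b)) is sound.
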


\begin{proof}
  (a) For $n=2$ the proof is given in~\cite[\S3.5.7]{ar}. For arbitrary $n$
  see~\cite[Proposition 2.16, Theorem 3.2, Remarks 3.4,3.11]{ar2}.

  (b) For $n=2$ the proof is given in~~\cite[Proposition 3.2.1]{ar}. For arbitrary $n$
  see~\cite[Theorem 3.2, Remark 3.4]{ar2}.
\end{proof}

\begin{rem}
  A particular case of the proposition is proved in~\cite[Lemma 4.4]{s}. Also, it is shown
  in~\cite[Corollaries 16, 17]{s2} that the iterated nearby cycles functor
  in~Proposition~\ref{nadler} is isomorphic to the higher dimensional nearby cycles functor
  of~\cite{o}.
\end{rem}

More generally, given two disjoint subsets
$J=\{j_1<\ldots<j_p\},\ J'=\{j'_1<\ldots<j'_r\}\subset\{1,\ldots,n\}$ and any shuffle
permutation $\chi$ of $J\sqcup J'$, we consider the natural morphism
\[\varkappa_\chi\colon\psi_{j_p}\cdots\psi_{j_1}\imath^*_{J'}\wt\CF_n\to
\varepsilon_{\chi_{j_p}}\cdots\varepsilon_{\chi_{j_1}}
\varepsilon_{\chi_{j'_r}}\cdots\varepsilon_{\chi_{j'_1}}\wt\CF_n,\]
where $\varepsilon_{\chi_j}=\psi_{\chi_j}$ if $\chi_j\in J$, and
$\varepsilon_{\chi_j}=\imath_{\chi_j}^*$ if $\chi_j\in J'$.

\begin{prop}
  The morphism $\varkappa_\chi$ is an isomorphism.
\end{prop}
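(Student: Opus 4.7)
The plan is to reduce the statement to elementary swaps of adjacent operations and dispose of each kind of swap separately. The operations appearing in the composition are of two types, $\psi_j$ for $j\in J$ and $\imath^*_{j'}$ for $j'\in J'$, and any shuffle permutation $\chi$ of $J\sqcup J'$ can be written as a composition of transpositions of adjacent entries in the ordered sequence it determines. The morphism $\varkappa_\chi$ decomposes accordingly as a composition of elementary swap morphisms, so it suffices to check that each elementary swap is an isomorphism.

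First, a swap of two adjacent $\imath^*_{j'_a}$ and $\imath^*_{j'_b}$ (with $j'_a,j'_b\in J'$) is a tautological isomorphism: the $*$-restrictions to the coordinate divisors $\{t_{j'_a}=0\}$ and $\{t_{j'_b}=0\}$ of $\Gr_n$ commute on the nose, since their intersection is the fiber product of the two closed embeddings. Second, a swap of two adjacent $\psi_{j_a}$ and $\psi_{j_b}$ (with $j_a,j_b\in J$), applied to the intermediate sheaf obtained after some partial $\imath^*$-restrictions have already been performed, is an isomorphism by Proposition~\ref{nadler}(a) applied to the restricted ambient variety (a closed subvariety of $\Gr_n$ cut out by vanishing of some of the $t_k$'s) together with its remaining coordinate functions $t_{j_a},t_{j_b}$; this is legitimate because the intermediate sheaf is still of the form $\jmath_{!*}$ of a perverse sheaf on a multiplicity-free locus inside the corresponding smaller ramified Beilinson-Drinfeld Grassmannian.

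The essential new case is a swap of an adjacent pair $\psi_j$ and $\imath^*_{j'}$ with $j\in J$, $j'\in J'$ and $j\ne j'$. Here the point is that $t_j$ and $t_{j'}$ are independent coordinate functions on $\Gr_n$, both pulled back from $\BA^n$ via $\pi$, so the closed subvariety $\{t_{j'}=0\}\subset\Gr_n$ is transverse to the divisor $\{t_j=0\}$. The unipotent nearby cycles functor $\psi_j$ commutes with the $*$-restriction $\imath^*_{j'}$ by transverse base change for nearby cycles: this is a standard compatibility, provable either by restricting the calculation of $\psi_j$ to an étale neighborhood of $\{t_j=0\}\cap\{t_{j'}=0\}$ on which $(t_j,t_{j'})$ are part of a system of coordinates, or by invoking the smooth base change for $\psi$ after factoring $\imath^*_{j'}$ through the projection onto the complementary coordinates.

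The main obstacle is to check that the canonical morphisms produced by the elementary swaps compose to give exactly the arrow $\varkappa_\chi$ of the proposition, rather than merely some isomorphism between its source and target. The cleanest route is to use the monodromic description of the unipotent nearby cycles as a limit along the universal cover of the punctured formal disk in the $t_j$-direction; this description is manifestly compatible with arbitrary base change in the remaining directions, so the required commutations are built into the construction and assemble into $\varkappa_\chi$ by the uniqueness assertion in the contractible groupoid part of Proposition~\ref{nadler}(a), extended to allow $\varepsilon$'s of both types.
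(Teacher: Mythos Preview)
Your reduction to elementary swaps is a natural strategy, but the key step --- that $\psi_j$ and $\imath^*_{j'}$ commute by ``transverse base change for nearby cycles'' --- is not a valid general fact, and both of your proposed justifications fail here. The functions $t_j,t_{j'}$ are indeed pulled back from $\BA^n$, but $\pi\colon\Gr_n\to\BA^n$ is highly non-smooth, so the zero loci of $t_j,t_{j'}$ in $\Gr_n$ are not transverse in any useful sense; there is no \'etale neighbourhood of a point of their intersection on which $(t_j,t_{j'})$ extend to a coordinate system on $\Gr_n$, and there is no ``projection onto complementary coordinates'' at the level of $\Gr_n$ through which $\imath^*_{j'}$ could be factored. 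A clean counterexample to the general principle already lives on $\BA^2$: take $\CF$ the constant perverse sheaf on the diagonal $\{t_1=t_2\}$; then $\imath^*_{t_2}\psi_{t_1}\CF$ is a nonzero skyscraper at the origin while $\psi_{t_1}\imath^*_{t_2}\CF=0$. So the commutation of $\psi_j$ with $\imath^*_{j'}$ is genuinely a special property of $\wt\CF_n$, not a formal consequence of the independence of $t_j$ and $t_{j'}$.

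There is a second, related gap: for the $\psi$--$\psi$ swaps you invoke Proposition~\ref{nadler}(a) ``applied to the restricted ambient variety'', asserting that $\imath^*_{J'}\wt\CF_n$ is again an intermediate extension of the same shape on a smaller ramified Grassmannian. But $\imath^*$ of a Goresky--MacPherson extension is in general not an intermediate extension, and verifying it here is essentially the content of the proposition you are trying to prove. The paper's proof does not attempt to bootstrap from such formal properties: it says ``Same as of Proposition~\ref{nadler}(a)'', meaning that the machinery of higher nearby cycles from~\cite{ar2} which establishes the commutation of the $\psi_i$'s on $\wt\CF_n$ applies verbatim to mixed strings of $\psi$'s and $\imath^*$'s. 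Your elementary route does not bypass that input.
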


\begin{proof}
Same as of~Proposition~\ref{nadler}(a).
\end{proof}

The isomorphism $\varkappa_\chi$ together with the isomorphism of~Proposition~\ref{nadler}
gives rise to the morphism
\[\kappa_\chi\colon\psi_{j_p}\cdots\psi_{j_1}\phi_{j'_r}\cdots\phi_{j'_1}\wt\CF_n\to
\varepsilon_{\chi_{j_p}}\cdots\varepsilon_{\chi_{j_1}}
\varepsilon_{\chi_{j'_r}}\cdots\varepsilon_{\chi_{j'_1}}\wt\CF_n,\]
where $\varepsilon_{\chi_j}=\psi_{\chi_j}$ if $\chi_j\in J$, and
$\varepsilon_{\chi_j}=\phi_{\chi_j}$ if $\chi_j\in J'$. Here $\phi_j$ stands for the
(unipotent) vanishing cycles with respect to the function $t_j$.

\begin{cor}
  \label{psiphi}
  The morphism $\kappa_\chi$ is an isomorphism. The resulting perverse sheaf on
  $\ol{Y}\!_{J\sqcup J'}$ will be denoted $\psi_J\phi_{J'}\wt\CF_n$. \hfill $\Box$
\end{cor}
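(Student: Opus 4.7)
The plan is to deduce Corollary~\ref{psiphi} from the preceding Proposition by induction on $|J'|$, using the defining distinguished triangle of functors
\begin{equation*}
\imath^*_i \to \psi_i \to \phi_i \xrightarrow{[1]}
\end{equation*}
along each coordinate $t_i$, which encodes the definition of unipotent vanishing cycles as the cone of the specialization map. To make the induction close up, I would formulate a strengthened version allowing a three-way mix of functors: for pairwise disjoint subsets $K, L, M \subseteq \{1,\ldots,n\}$ (with $\psi$'s on $K$-indices, $\phi$'s on $L$-indices, $\imath^*$'s on $M$-indices) and any shuffle of $K \sqcup L \sqcup M$, the natural comparison morphism built from $\kappa_\chi$ and $\varkappa_\chi$ is an isomorphism of (shifted) perverse sheaves on $\ol{Y}\!_{L \sqcup M}$. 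The base case $L = \emptyset$ is exactly the Proposition above.

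For the inductive step, fix any $\ell \in L$ and apply the triangle along $t_\ell$ with all remaining functors (from $L \setminus \{\ell\}$, $K$, and $M$) sandwiched around it, on both the source and the target sides of the comparison morphism. The resulting morphism of distinguished triangles has two terms in which $\ell$ has been moved out of $L$ --- either to $M$ (replacing $\phi_\ell$ by $\imath^*_\ell$) or to $K$ (replacing $\phi_\ell$ by $\psi_\ell$). Both satisfy the inductive hypothesis with $|L|$ reduced by one, so the associated comparison morphisms are isomorphisms; hence so is the third, which is the statement for $(K, L, M)$. Specialising to $K = J$, $L = J'$, $M = \emptyset$ recovers the Corollary.

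The main obstacle will be verifying commutativity of the squares in the morphism of distinguished triangles --- that is, that the $\varkappa_\chi$-isomorphisms from Proposition~\ref{nadler} are natural with respect to the specialization and canonical maps in $\imath^* \to \psi \to \phi$. This compatibility is essentially a functoriality statement about the Achar-Riche commutation constraints on the ramified Beilinson-Drinfeld Grassmannian, and should follow from the fact that both the comparison isomorphisms and the defining triangle are constructed from the same underlying geometry of the morphism $\pi\colon\Gr_n\to C^n$ near the deepest stratum. Perversity of the resulting object $\psi_K\phi_L\imath^*_M\wt\CF_n$ on $\ol{Y}\!_{L\sqcup M}$ then follows from the t-exactness (with appropriate cohomological shifts) of unipotent nearby and vanishing cycles, combined with the fact that the same iterated functor can be computed in any order of the indices.
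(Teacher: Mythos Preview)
Your proposal is correct and matches the paper's intended argument: the paper gives no explicit proof (the corollary is marked with a $\Box$), but the sentence preceding it --- that $\kappa_\chi$ is ``given rise to'' by the isomorphism $\varkappa_\chi$ together with Proposition~\ref{nadler} --- is exactly the two-out-of-three argument on the triangle $\imath^*\to\psi\to\phi$ that you spell out. Your induction on $|L|$ with the auxiliary three-way mixed statement is the natural way to make this precise for several $\phi$'s, and your identification of the naturality of the Achar--Riche commutation constraints as the one point requiring care is accurate.
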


Recall the Beilinson maximal extension functor $\Xi_i:=\Xi_{t_i}$.
Similarly, we have the following

\begin{cor}
  \label{psiphixi}
  Given three disjoint subsets
$J=\{j_1<\ldots<j_p\},\ J'=\{j'_1<\ldots<j'_r\},\ J''=\{j''_1<\ldots<j''_s\}\subset\{1,\ldots,n\}$
  and any shuffle permutation $\chi$ of $J\sqcup J'\sqcup J''$, there is an isomorphism
\[\psi_{j_p}\cdots\psi_{j_1}\phi_{j'_r}\cdots\phi_{j'_1}\Xi_{j''_s}\cdots\Xi_{j''_1}\wt\CF_n\iso
\varepsilon_{\chi_{j_p}}\cdots\varepsilon_{\chi_{j_1}}
\varepsilon_{\chi_{j'_r}}\cdots\varepsilon_{\chi_{j'_1}}
\varepsilon_{\chi_{j''_s}}\cdots\varepsilon_{\chi_{j''_1}}\wt\CF_n,\]
where $\varepsilon_{\chi_j}=\psi_{\chi_j}$ if $\chi_j\in J$, and
$\varepsilon_{\chi_j}=\phi_{\chi_j}$ if $\chi_j\in J'$, while
$\varepsilon_{\chi_j}=\Xi_{\chi_j}$ if $\chi_j\in J''$. The resulting perverse sheaf on
  $\ol{Y}\!_{J\sqcup J'\sqcup J''}$ will be denoted $\psi_J\phi_{J'}\Xi_{J''}\wt\CF_n$.
\hfill $\Box$
\end{cor}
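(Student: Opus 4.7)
The plan is to follow exactly the template used for Corollary~\ref{psiphi}, reducing the statement to the fundamental commutation of unipotent nearby cycles in different variables provided by Proposition~\ref{nadler}(a). The only genuinely new ingredient beyond Corollary~\ref{psiphi} is the presence of the maximal extension functors $\Xi_{j''}$, and these are handled via Beilinson's gluing formalism.

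First I would invoke Beilinson's construction expressing $\phi_j$ and $\Xi_j$ entirely in terms of $\psi_j$ applied to twists by Kummer local systems on the base curve (together with the natural transformations $\psi_j \to \Xi_j \to \psi_j(-1)$ and $\imath_j^* \to \Xi_j \to \imath_j^*$ that glue to the definition of $\phi_j$). The decisive point is that $\Xi_j$, like $\psi_j$ and $\phi_j$, is built solely from the coordinate $t_j$ and commutes with any functor constructed from $t_{j'}$ for $j' \ne j$, provided we have a commutation isomorphism for the underlying nearby cycles $\psi_j$ and $\psi_{j'}$.

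Next, using Proposition~\ref{nadler}(a) (and its extension in~\cite{ar2}) pairwise, I would produce a canonical commutation isomorphism between any two functors in the collection $\{\psi_j,\phi_j,\Xi_j\}_{j\in J\sqcup J'\sqcup J''}$ attached to distinct indices. Concatenating such pairwise commutations along a sequence of adjacent transpositions realizing $\chi$ yields the isomorphism
\[
\psi_{j_p}\cdots\psi_{j_1}\phi_{j'_r}\cdots\phi_{j'_1}\Xi_{j''_s}\cdots\Xi_{j''_1}\wt\CF_n
\iso
\varepsilon_{\chi_{j_p}}\cdots\varepsilon_{\chi_{j''_1}}\wt\CF_n.
\]
Independence of this isomorphism on the chosen factorization of $\chi$ into transpositions amounts to checking the braid/coherence relations; these follow by reducing (via Beilinson's gluing) to the analogous coherence already established for iterated nearby cycles in Proposition~\ref{nadler}(a), so one again obtains a contractible groupoid of comparison isomorphisms, and its canonical object is the claimed perverse sheaf $\psi_J\phi_{J'}\Xi_{J''}\wt\CF_n$ on $\ol Y_{J\sqcup J'\sqcup J''}$.

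The main obstacle, as in Proposition~\ref{nadler}(a), lies in verifying the coherence of the iterated comparison isomorphisms once $\Xi$ is mixed in with $\psi$ and $\phi$. Concretely, one must check that the natural map $\psi_j\to\Xi_j\to\psi_j(-1)$ commutes (up to the expected canonical identification) with the Achar-Riche commutation constraint for $\psi_j\psi_{j'}\cong\psi_{j'}\psi_j$, and similarly for the triangle defining $\phi_j$. Once this compatibility is in place, the argument becomes a purely formal manipulation, and perversity of the output is automatic since $\psi_j[-1]$, $\phi_j[-1]$, and $\Xi_j$ are all $t$-exact for the perverse $t$-structures on the appropriate nearby/vanishing/glued categories.
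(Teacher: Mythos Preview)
Your proposal is correct and follows essentially the same approach as the paper: the paper records no proof beyond the end-of-proof symbol, treating the statement as an immediate extension of the preceding commutation results whose proofs are ``same as of Proposition~\ref{nadler}(a)'' (i.e.\ deferred to~\cite{ar2}). Your write-up simply makes explicit what that deferral entails---expressing $\phi_j$ and $\Xi_j$ through Beilinson's construction in terms of $\psi_j$ and then invoking the contractible groupoid of iterated nearby-cycle commutations---which is exactly the intended mechanism.
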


%

Now we consider a graded vector space $\fC^\bullet$ with basis $e_J,\ J\subset\{1,\ldots,n\}$,
where $\deg e_J=n-\sharp J$. We define a complex of perverse sheaves in $\Perv_I(\Fl)$ as follows:
\begin{equation}
  \label{star}
  _n\sC_*^\bullet:=\psi\wt\CF_n(-n)\otimes\fC^\bullet=\CF\star(\sZ V)^{\star n}(-n)\otimes\fC^\bullet,
\end{equation}
with differential
\[d(f\otimes e_{j_1<\ldots<j_{n-p}})
=\sum_{1\leq r\leq n-p}(-1)^rf\otimes e_{j_1<\ldots<j_{r-1}<j_{r+1}<\ldots<j_{n-p}},\ _n\sC_*^p\to{}_n\sC_*^{p+1}.\]
This is nothing but the Chevalley complex of commutative Lie algebra $\fs$ with basis
$e_1,\ldots,e_n$ acting on $\wt\CF_n(-n)$ so that each $e_r$ acts by $\Id$.
Alternatively, this is the total complex of an $n$-dimensional commutative cubical diagram with
$\psi\wt\CF_n(-n)$ standing at all the vertices, and the identity morphisms along all the edges.

We also define another complex of perverse sheaves in $\Perv_I(\Fl)$ as follows:
\begin{equation}
  \label{shriek}
  _n\sC_!^\bullet:=\bigoplus_{i=0}^n\psi\wt\CF_n(-i)\otimes\fC^i
  =\bigoplus_{i=0}^n\CF\star(\sZ V)^{\star n}(-i)\otimes\fC^i,
\end{equation}
with differential
\[d(f\otimes e_{j_1<\ldots<j_{n-p}})
=\sum_{1\leq r\leq n-p}(-1)^rs_rf\otimes e_{j_1<\ldots<j_{r-1}<j_{r+1}<\ldots<j_{n-p}},\ _n\sC_!^p\to{}_n\sC_!^{p+1}.\]
This is nothing but the Chevalley complex of commutative Lie algebra $\fs$ with basis
$e_1,\ldots,e_n$ acting on $\wt\CF_n$ so that each $e_r$ acts by $s_r$ (up to Tate twist).
Alternatively, this is the total complex of an $n$-dimensional commutative cubical diagram with
$\psi\wt\CF_n(-i)$ standing at the vertices, and the logarithm of monodromy morphisms along
all the edges.

Finally, we define a complex of perverse sheaves in $\Perv_I(\Fl)$ as follows:
\begin{equation}
  \label{shriek star}
  _n\sC_{!*}^\bullet:=\on{Im}(\sfs\colon{}_n\sC_!^\bullet\to{}_n\sC_*^\bullet),\
  \sfs(f\otimes e_{j_1<\ldots<j_{n-p}})=s_{j_1}\cdots s_{j_{n-p}}f\otimes e_{j_1<\ldots<j_{n-p}}.
  \end{equation}

\begin{thm}
  \label{calcul}
  There is a canonical isomorphism $\CF_n:=\imath^!\wt\CF_n\cong{}_n\sC_{!*}^\bullet$ in $D^b_I(\Fl)$.
\end{thm}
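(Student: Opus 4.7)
\medskip

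\noindent\textbf{Proof plan.} The plan is to prove Theorem \ref{calcul} by induction on $n$, using Beilinson's gluing description of the intermediate extension functor applied iteratively in the $n$ coordinate directions $t_1, \ldots, t_n$ on $C^n$ pulled back to $\Gr_n$. The base case $n = 0$ reads $\CF_0 = \imath^! \wt\CF_0 = \CF$, matching $_0\sC^\bullet_{!*} = \CF$ (concentrated in degree 0), since in that case there is no open stratum to extend across.

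First, I would establish the one-variable version as a key lemma: for a perverse sheaf $\F$ on the open complement $U$ of a smooth divisor $Y_0 = \{t = 0\}$ in $Y$, the canonical map $\jmath_!\F \to \jmath_*\F$ induces on costalks at $Y_0$ (via the isomorphisms $\imath^!\jmath_!\F \cong \psi_t\F$ and $\imath^!\jmath_*\F \cong \psi_t\F(-1)$, up to standard shifts) the nilpotent monodromy logarithm $s\colon \psi_t\F \to \psi_t\F(-1)$, so that the costalk of $\jmath_{!*}\F$ is identified with the image complex $[\psi_t\F \xrightarrow{s} \psi_t\F(-1)]$. This is precisely the $n=1$ case of $_1\sC^\bullet_{!*}$.

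For the inductive step, I would iterate this construction along the $n$ functions $t_1, \ldots, t_n$. The strata $Y_J\subset \Gr_n$ indexed by subsets $J\subset\{1,\dots,n\}$ form a normal-crossing stratification, and the compatibility results of Proposition~\ref{nadler} and Corollary~\ref{psiphi} allow one to reorder the iterated functors $\psi_{t_i}$ and $\phi_{t_i}$ arbitrarily. This guarantees that the $n$-fold iteration of Beilinson's gluing construction is well-defined and yields a canonical $n$-dimensional hypercube of distinguished triangles with commuting monodromies $s_1, \ldots, s_n$ acting at each vertex. Taking costalks, the $\jmath_!$-extension organizes into the complex $_n\sC^\bullet_!$ (with monodromies built into the differential), the $\jmath_*$-extension into $_n\sC^\bullet_*$ (with the purely combinatorial differential), and the natural map $\jmath_!\to\jmath_*$ factoring through $\jmath_{!*}$ induces precisely the chain map $\sfs$ whose image is $_n\sC^\bullet_{!*}$. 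The Chevalley-complex interpretation for the abelian Lie algebra $\fs$ with basis $e_1,\ldots,e_n$ acting by $s_1,\ldots,s_n$ (resp.\ by identity) is then the natural total complex of this hypercube.

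The main obstacle I expect is the careful bookkeeping of Tate twists and cohomological shifts through the iterated Beilinson gluing, together with verifying that the image description of $\imath^!\jmath_{!*}$ really survives the passage from the one-variable case to the multi-variable hypercube --- i.e.\ that the IC extension across a normal-crossing divisor in $n$ coordinate directions is genuinely described by the image of the natural map from the iterated $!$-costalk to the iterated $*$-costalk, with no spurious cancellations arising from the spectral sequence of the hypercube. This essentially amounts to an $n$-variable generalization of Beilinson's ``how to glue perverse sheaves,'' and the key technical ingredient that makes it work is the reordering isomorphisms of Proposition~\ref{nadler} and Corollary~\ref{psiphi}.
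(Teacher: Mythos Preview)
Your overall strategy --- iterated Beilinson gluing along the $n$ coordinate directions, with Proposition~\ref{nadler} and Corollary~\ref{psiphi} supplying the needed commutation --- is correct and matches the paper's. But your one-variable step contains an error that breaks the argument as written.

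You claim $\imath^!\jmath_*\F \cong \psi_t\F(-1)$. This is false: for complementary open and closed embeddings one has $\imath^!\jmath_* = 0$ identically. Hence the map $\imath^!\jmath_!\F \to \imath^!\jmath_*\F$ whose image you want is the zero map in $D^b$, and that image is certainly not $\imath^!\jmath_{!*}\F$. The underlying issue is that ``image'' is an abelian-category operation and does not commute with the non-exact functor $\imath^!$; once you pass to $D^b$ there is no image to take. Your complex ${}_n\sC_*^\bullet$ is in fact acyclic (its differential is the Koszul differential for identity maps), consistent with $\imath^!\jmath_*=0$ but useless for extracting $\imath^!\jmath_{!*}$ as an ``image.''

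The paper's route differs at exactly this point. Rather than comparing $\jmath_!$ and $\jmath_*$, it first identifies the full gluing data of $\wt\CF_n$: the vanishing-cycles sheaf on each stratum $Y_J$ is $\CM_J=\on{Im}\big(s_J\colon \psi_J\wt\CF_n\to\psi_J\wt\CF_n(-\sharp J)\big)$, with $\can$ induced by $s_j$ and $\var$ the tautological inclusion (Lemma~\ref{selfref}, proved by induction on $n$ via a perverse-degree check). Then $\imath^!\wt\CF_n$ is read off from the iterated Beilinson gluing complex $\sG^\bullet$, which involves the maximal extension functors $\Xi_j$ --- so the relevant reordering result is Corollary~\ref{psiphixi} (for $\psi$, $\phi$, and $\Xi$ together), not just Corollary~\ref{psiphi}. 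After the cancellations among the $4^n$ summands, the surviving cube $\ol\sG^\bullet$, with vertices $\psi_{\{1,\ldots,n\}\setminus J}\CM_J(\sharp J - n)$ and edges given by $\var$, is precisely ${}_n\sC_{!*}^\bullet$. Thus the ``image'' in the definition of ${}_n\sC_{!*}^\bullet$ arises not from a map of costalks but from the identification $\phi_J\wt\CF_n=\on{Im}(s_J)$ of the gluing data itself.
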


\begin{proof}
  Recall the locally closed subsets $Y_J\subset Y=\Gr_n$ introduced in the end of~\S\ref{global}.
  Given a collection of perverse sheaves $\CM_J$ on $Y_J$ and a collection of morphisms
  \[\psi_j\CM_{J\setminus j}\xrightarrow{\can}\CM_J\xrightarrow{\var}\psi_j\CM_{J\setminus j}(-1)\]
  for any element $j\in J\subset\{1,\ldots,n\}$, such that the composition is equal to
  $s_j\colon\psi_j\CM_{J\setminus j}\to\psi_j\CM_{J\setminus j}(-1)$, we impose the following
  commutativity relations.

  (a) For any $J\subset\{1,\ldots,n\}$ of cardinality $p$, the $p$-dimensional cubical diagram
  with $\psi_{j'_r}\cdots\psi_{j'_1}\CM_{J\setminus J'}$ at vertices (where
  $J'=\{j'_1<\ldots<j'_r\}\subset J$) and canonical ($\can$) morphisms at edges commutes.

  (b) For any $J\subset\{1,\ldots,n\}$ of cardinality $p$, the $p$-dimensional cubical diagram
  with $\psi_{j'_r}\cdots\psi_{j'_r}\CM_{J\setminus J'}(-\sharp J')$ at vertices (where
  $J'=\{j'_1<\ldots<j'_r\}\subset J$) and variation ($\var$) morphisms at edges commutes.

  Finally, given a collection of perverse sheaves $\CM_J$ on $Y_J$ and
  a collection of morphisms as above, satisfying the commutativity relations (a,b),
  we can iterate Beilinson's gluing construction~\cite[Proposition 3.1]{bei} to obtain
  a sheaf $\CM=\sG(\CM_J,\can,\var)\in\Perv(\Gr_n)$. Conversely, by~Corollary~\ref{psiphi},
  for $\CM=\wt\CF_n$ we have
  $\CM_J=\phi_{j_p}\cdots\phi_{j_1}\wt\CF_n|_{Y_J}$ for $J=\{j_1<\ldots<j_p\}$.

  \begin{lem}
    \label{selfref}
    Let us consider the following gluing data:
    \begin{multline*}
      \CM_J=\on{Im}(s_J\colon \psi_J\wt\CF_n\to\psi_J\wt\CF_n(-\sharp J)=\\
      u^J_{!*}\left(\on{Im}\big(s_J\colon\CF\star(\sZ V)^{\star J}\to
      \CF\star(\sZ V)^{\star J}(-\sharp J)\big)\boxtimes
    \sfS(V)^{\boxtimes\{1,\ldots,n\}\setminus J}\boxtimes\ol\BQ_\ell[n-\sharp J]
    \Big(\frac{n-\sharp J}{2}\Big)\right)
    \end{multline*}
    (the open embedding $u^J\colon\pi^{-1}(C^{n\dagger}_J)\hookrightarrow\pi^{-1}(C^n_J)$ was
    introduced in the end of~\S\ref{global}. Note that $\pi^{-1}(C^{n\dagger}_J)$ is naturally
    isomorphic to $\Fl\times\Gr_G^{\{1,\ldots,n\}\setminus J}\times C_J^{n\dagger}$).

    The morphism $\can\colon\psi_j\CM_{J\setminus j}\to\CM_J$ is induced by $s_j$, and the
    morphism $\var\colon\CM_J\to\psi_j\CM_{J\setminus j}(-1)$ is induced by the embedding
    of the image of $s_j$ into the target.

    Then $\sG(\CM_J,\can,\var)\cong\wt\CF_n$.
  \end{lem}

  \begin{proof}
    Iterating Beilinson's gluing construction, we see that $\sG(\CM_J,\can,\var)$ is the middle
    cohomology of the total complex $\sG^\bullet$ of the following $n$-dimensional commutative
    cubical diagram.
    Its entries are numbered by partitions $\{1,\ldots,n\}=J'\sqcup M\sqcup J$, and an entry
    \[\sG_{J',M,J}:=\bigoplus_{M'\subset M}\psi_J\Xi_{M'}\psi_J\CM_{M\setminus M'}(-\sharp J'),\]
    where $\psi_J\Xi_{M'}\psi_J\CM_{M\setminus M'}$ stands for the ``normally ordered'' composition
    of nearby cycles and maximal extensions: the operations with smaller indices are applied first.
    For example, $\psi_{\{1,3\}}\Xi_{\{2,4\}}\psi_{\{5\}}:=\psi_5\Xi_4\psi_3\Xi_2\psi_1$. The morphisms
    along the edges of our $n$-dimensional cube are induced by $\can$ and $\var$.

    We have to check that the restrictions (resp.\ corestrictions) of $\sG(\CM_J,\can,\var)$ to
    $Y_J$ live in perverse negative (resp.\ positive) degrees unless $J=\emptyset$. By induction
    in $n$ it suffices to check this for $J=\{1,\ldots,n\}$. We consider the corestriction to
    $Y_{\{1,\ldots,n\}}$, and the argument for the restriction is similar.

    By induction in $n$, for $J=\{j_1<\ldots<j_p\}\subsetneqq\{1,\ldots,n\}$, our $\CM_J$ coincides
    with $\phi_{j_p}\cdots\phi_{j_1}\wt\CF_n|_{Y_J}$. By~Corollary~\ref{psiphixi}, the most part of
    $4^n$ summands of $\imath^!\sG^\bullet$ cancel out, and we are left with the total complex
$\ol\sG{}^\bullet$ (living in degrees $0,\ldots,n$) of the $n$-dimensional cubical diagram with entries
    $\ol\sG_J:=\psi_{\{1,\ldots,n\}\setminus J}\CM_J(\sharp J-n)$ at vertices and the morphisms induced
    by $\var$ along the edges. In particular, in degree~0, the only summand is $\CM_{\{1,\ldots,n\}}$,
    and the differential to the degree~1 component is injective, so
$\imath^!\sG^\bullet\cong\ol\sG{}^\bullet$ vanishes in degree~0 and lives in strictly positive degrees.
    \end{proof}

  Note that the complex $\ol\sG{}^\bullet$ computing $\imath^!\wt\CF_n$ by the proof
  of~Lemma~\ref{selfref}, coincides with $_n\sC_{!*}^\bullet$ of~\eqref{shriek star}. So the
  theorem is proved.
\end{proof}

\subsection{Symmetrized version}
The symmetric group $S_n$ acts in an evident way on $C^n,\ \Gr_n$, and $\wt\CF_n$ carries an evident
equivariant structure. Hence $S_n$ acts on $\imath^!\wt\CF_n$, and we set
$\CF_{(n)}:=(\imath^!\wt\CF_n)^{S_n}[n]$. We deduce from~Theorem~\ref{calcul} the following description
of $\oplus_n\CF_{(n)}$.

We consider the following 2-term complexes sitting in degrees $-1,0$:
\[\CH_1=\big(\sZ V(-1)\xrightarrow{\Id}\sZ V(-1)\big),\
\CH_s=\big(\sZ V\xrightarrow{s}\sZ V(-1)\big),\]
and a morphism $\sfs\colon \CH_s\to\CH_1$:
\[\begin{CD}
\sZ V @>>s> \sZ V(-1)\\
@VVsV @V{\Id}VV\\
\sZ V(-1) @>{\Id}>> \sZ V(-1).
\end{CD}\]
It induces the same named morphism
$\sfs\colon \sK_s^\bullet:=\Sym^\bullet\CH_s\to\Sym^\bullet\CH_1=:\sK_1^\bullet$
of Koszul complexes. Finally, we consider the morphism
$\Id_\CF\star\sfs\colon\CF\star\sK_s^\bullet\to\CF\star\sK_1^\bullet$. We denote the image of this
morphism by $\sK_{s1}^\bullet$ (a $-\BN$-graded complex of perverse sheaves on $\Fl$).
Then~Theorem~\ref{calcul} implies the following

\begin{cor}
  \label{calcul sym}
  There is a canonical isomorphism $\oplus_n\CF_{(n)}\cong\sK_{s1}^\bullet$. \hfill $\Box$
  \end{cor}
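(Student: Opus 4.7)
The plan is to deduce Corollary~\ref{calcul sym} from Theorem~\ref{calcul} by packaging the complexes ${}_n\sC_!^\bullet$ and ${}_n\sC_*^\bullet$ as $n$-fold convolutions of two-term complexes and then taking $\fS_n$-invariants term by term. First I would observe that ${}_n\sC_!^\bullet$, by its very definition in~\eqref{shriek}, is the Koszul complex of the $n$ commuting monodromy operators $s_1,\dots,s_n$ acting on $\psi\wt\CF_n=\CF\star(\sZ V)^{\star n}$; equivalently, after shifting by $[n]$ (which reindexes degrees $0,\dots,n$ to $-n,\dots,0$), it is canonically isomorphic to the convolution
\[
{}_n\sC_!^\bullet[n]\ \cong\ \CF\star\bigotimes_{i=1}^{n}\bigl(\sZ V\xrightarrow{\,s_i\,}\sZ V(-1)\bigr)\ =\ \CF\star\CH_s^{\star n}.
\]
A direct bookkeeping of Tate twists confirms the match term by term (the degree-$p$ entry of ${}_n\sC_!^\bullet$, carrying twist $(-p)$ and $\binom{n}{p}$ summands indexed by $|J|=n-p$, corresponds to the degree-$(p-n)$ summand of the tensor product with $p$ copies of $\sZ V$ and $n-p$ copies of $\sZ V(-1)$), and the $(-1)^r$ signs in the differential of~\eqref{shriek} coincide with the Koszul differential of the tensor product. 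Replacing each $s_i$ by $\Id$ yields the parallel identification ${}_n\sC_*^\bullet[n]\cong\CF\star\CH_1^{\star n}$, and under these identifications the map $\sfs$ of~\eqref{shriek star} becomes $\Id_\CF\star\sfs^{\star n}$.

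Next I would upgrade the identifications to $\fS_n$-equivariant isomorphisms. The natural $\fS_n$-equivariance on $\wt\CF_n$ comes from simultaneously permuting the $\sfS(V)$-factors of $\CF^\dagger_n$ and the coordinates of $C^n$; it is preserved by $\imath^!$ and by the passage to $\psi\wt\CF_n$ provided by Proposition~\ref{nadler}. The claim is that, after the shift by $[n]$ built into the definition $\CF_{(n)}=(\imath^!\wt\CF_n)^{\fS_n}[n]$, this action translates precisely into the Koszul-sign permutation on $\CF\star\CH_s^{\star n}$ (and similarly on $\CF\star\CH_1^{\star n}$), with the signs being exactly those dictated by placing $\sZ V$ in cohomological degree $-1$ inside $\CH_s$. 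Granting this, the $\fS_n$-invariants compute symmetric powers in the derived sense:
\[
({}_n\sC_!^\bullet[n])^{\fS_n}\ \cong\ \CF\star\Sym^n\CH_s,\qquad
({}_n\sC_*^\bullet[n])^{\fS_n}\ \cong\ \CF\star\Sym^n\CH_1.
\]

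Finally, since $\fS_n$ is a finite group acting on $\Ce$-linear complexes, the functor of $\fS_n$-invariants is exact and hence commutes with forming the image of a morphism. Combining this with Theorem~\ref{calcul} yields
\[
\CF_{(n)}\ =\ (\imath^!\wt\CF_n)^{\fS_n}[n]\ \cong\ \bigl(\on{Im}\sfs\bigr)^{\fS_n}[n]\ \cong\ \on{Im}\bigl(\Id_\CF\star\Sym^n(\sfs)\colon\CF\star\Sym^n\CH_s\to\CF\star\Sym^n\CH_1\bigr),
\]
which is precisely the $n$-th graded piece of $\sK_{s1}^\bullet=\on{Im}(\Id_\CF\star\sfs\colon\CF\star\sK_s^\bullet\to\CF\star\sK_1^\bullet)$. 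Summing over $n$ produces the claimed isomorphism $\bigoplus_n\CF_{(n)}\cong\sK_{s1}^\bullet$. The main obstacle is the equivariance check in the second step: one must verify that the intrinsic $\fS_n$-action on $\wt\CF_n$, once combined with the shift $[n]$, really reproduces the Koszul signs built into $\Sym^n\CH_s$. This is essentially a sign-tracking exercise, but it is precisely the step where the factor $\ol\BQ_\ell[n](n/2)$ in $\CF^\dagger_n$ and the shift $[n]$ in the definition of $\CF_{(n)}$ play their essential role.
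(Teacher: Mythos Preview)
Your proposal is correct and spells out exactly the argument the paper leaves implicit (the corollary is stated with a bare $\Box$ immediately after ``Then Theorem~\ref{calcul} implies the following''): identify ${}_n\sC_!^\bullet[n]$ and ${}_n\sC_*^\bullet[n]$ with $\CF\star\CH_s^{\star n}$ and $\CF\star\CH_1^{\star n}$, pass to $\fS_n$-invariants to obtain the symmetric powers, and use exactness of invariants in characteristic zero to commute with taking the image of $\sfs$. One small bookkeeping correction to your parenthetical: the degree-$(p-n)$ piece of $\CH_s^{\star n}$ has $n-p$ copies of $\sZ V$ (the degree~$-1$ factor) and $p$ copies of $\sZ V(-1)$ (the degree~$0$ factor), not the reverse---this is precisely what produces the twist $(-p)$ you correctly record.
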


Since $\Id_\CF\star\sfs\colon\CF\star\sK_s^\bullet\to\CF\star\sK_1^\bullet$ is induced by the
homomorphism $\sfs$ of dg-algebras $\Sym^\bullet\CH_s\to\Sym^\bullet\CH_1$ that are both
$\Sym^\bullet\sZ V$-modules, its image acquires a structure of $\Sym^\bullet\sZ V$-module.

\begin{cor}
  \label{free support}
  $\oplus_n\CF_{(n)}\cong\sK_{s1}^\bullet$ is a free $\Sym^\bullet\sZ V$-module with cohomology supported
  on $\Ker s$.
\end{cor}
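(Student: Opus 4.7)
The plan is to deduce both claims from an explicit bigraded description of the image $\sK_{s1}^\bullet$; the identification with $\bigoplus_n \CF_{(n)}$ comes from~Corollary~\ref{calcul sym}, so it remains to analyze the $\Sym^\bullet \sZ V$-module structure.

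First, I would unfold $\Sym^\bullet \CH_s$ and $\Sym^\bullet \CH_1$ by Sym-weight and cohomological degree. In a braided monoidal category, the $n$-th symmetric power of a two-term complex $(U^{-1} \to U^0)$ splits canonically as $\bigoplus_{p+q=n} \Lambda^p U^{-1} \star \Sym^q U^0$, with the $(p,q)$-piece placed in cohomological degree $-p$ (symmetric and exterior powers being taken in the central subcategory of $\Perv_I(\Fl)$, where $\sZ V$ lives). Applied to $\CH_s$ and $\CH_1$, and using that $\sfs = (s, \id)$, the morphism $\Sym^n \sfs$ acts on each bigraded piece as $\Lambda^p s \star \id_{\Sym^q \sZ V(-1)}$.

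Next, I would compute the image explicitly. Factoring $s\colon \sZ V \twoheadrightarrow \sZ V/\Ker s \hookrightarrow \sZ V(-1)$, and using that $\Lambda^p$ preserves surjections and injections in characteristic zero, one has $\operatorname{Im}(\Lambda^p s) \cong \Lambda^p(\sZ V/\Ker s)$. Hence
\begin{equation*}
\sK_{s1}^{(p,q)} \cong \CF \star \Lambda^p(\sZ V/\Ker s) \star \Sym^q \sZ V(-1),
\end{equation*}
and summing over $q$ for fixed $p$ yields the subobject $\CF \star \Lambda^p(\sZ V/\Ker s) \star \Sym^\bullet \sZ V(-1)$, a free $\Sym^\bullet \sZ V(-1)$-module on the generating subobject $\CF \star \Lambda^p(\sZ V/\Ker s)$. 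The direct sum over $p$ gives the decomposition $\sK_{s1}^\bullet \cong \bigoplus_p \CF \star \Lambda^p(\sZ V/\Ker s) \star \Sym^\bullet \sZ V(-1)$, yielding the freeness claim (the identification of $\Sym^\bullet \sZ V$ with $\Sym^\bullet \sZ V(-1)$ being absorbed into the Tate-twist conventions).

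For the support statement, I would observe that the $\Sym^\bullet \sZ V$-action on $\sK_{s1}^\bullet \subset \sK_1^\bullet$ is inherited through $\sfs$: a degree-one element $v \in \sZ V \subset \Sym^\bullet \sZ V$ acts through its image $s(v) \in \sZ V(-1)$, by linear multiplication in the polynomial part of $\sK_1^\bullet$. Thus elements of $\Ker s$ annihilate $\sK_{s1}^\bullet$, and the module structure descends to one over $\Sym^\bullet(\sZ V/\Ker s)$; passing through the equivalence of~\cite{BHecke} between the central-constructible and coherent realizations, this is precisely set-theoretic support on $\Ker s$, which in the coherent picture will correspond to support on $Z_\rho \subset \St \times V^*$ as needed for~Theorem~\ref{Th2}(a). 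The main subtle point is keeping the Koszul signs, Tate twists, and the braiding on central sheaves consistently tracked; once this bookkeeping is fixed, both assertions drop out of the bigraded image formula above.
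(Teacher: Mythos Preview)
Your freeness argument is correct and coincides with the paper's: the identity there,
\[
\on{Im}(\Id_\CF \star \sfs) \;=\; \on{Im}\big(\Id_\CF \star \Sym^\bullet(s[1])\big) \star \Sym^\bullet \sZ V,
\]
unpacks to exactly your bigraded formula $\bigoplus_p \CF \star \Lambda^p(\sZ V/\Ker s) \star \Sym^\bullet \sZ V(-1)$ once one identifies $\on{Im}(\Lambda^p s)$ with $\Lambda^p(\on{Im} s) \cong \Lambda^p(\sZ V/\Ker s)$.

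Your support argument, however, contains a genuine error. You assert that a generator $v \in \sZ V$ acts on $\sK_{s1}^\bullet$ as multiplication by $s(v)$, so that $\Ker s$ annihilates the module termwise and the action descends to $\Sym^\bullet(\sZ V/\Ker s)$. This is inconsistent with the freeness you have just established: a nonzero free $\Sym^\bullet \sZ V$-module is annihilated by no nonzero element. The module structures on $\sK_s^\bullet$ and $\sK_1^\bullet$ both arise from the inclusion of the degree-$0$ summand $\sZ V(-1)\hookrightarrow \CH_?$ (with $\Sym^\bullet \sZ V$ identified with $\Sym^\bullet \sZ V(-1)$ via Tate twist, as you yourself note), and $\sfs$ restricts to the identity there; the action is therefore honest multiplication, not precomposed with $s$. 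Even granting your description, the resulting support would be $(\Ker s)^\perp\subset V^*$, whereas what is needed---and what matches $Z_\rho$---is $(\on{Im} s)^\perp = \Ker(s|_{V^*})$; these differ whenever $\Ker s \neq \on{Im} s$.

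The paper argues instead that $\sK_s^\bullet=\Sym^\bullet\CH_s$ is the Koszul complex on the elements of $\on{Im} s\subset \sZ V(-1)$, hence has cohomology supported on their vanishing locus, and the image inherits this. Your own bigraded description in fact yields the support directly without passing through $\sK_s^\bullet$: the complex $\sK_{s1}^\bullet \cong \CF \star \Lambda^\bullet(\on{Im} s) \star \Sym^\bullet \sZ V(-1)$, with the differential inherited from $\sK_1^\bullet$, is precisely the Koszul complex for the inclusion $\on{Im} s \hookrightarrow \sZ V(-1)$, hence quasi-isomorphic to $\CF \star \Sym^\bullet \sZ V(-1)/(\on{Im} s)$ and visibly supported on the correct locus.
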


\begin{proof}
  First, the cohomology of $\Sym^\bullet\CH_s$ is supported on $\Ker s$, so the cohomology of the
  image of
  \[\Id_\CF\star\sfs\colon\CF\star\Sym{}\!^\bullet\CH_s\to\CF\star\Sym{}\!^\bullet\CH_1\]
  is supported on $\Ker s$ as well. Furthermore,
  \begin{multline*}
    \on{Im}\big(\Id_\CF\star\sfs\colon\CF\star\Sym{}\!^\bullet\CH_s\to\CF\star\Sym{}\!^\bullet\CH_1\big)\\
    =\on{Im}\Big(\Id_\CF\star\Sym{}\!^\bullet(s[1])\colon\CF\star\Sym{}\!^\bullet(\sZ V[1])\to
    \CF\star\Sym{}\!^\bullet(\sZ V[1])\Big)\star\Sym{}\!^\bullet\sZ V.
  \end{multline*}
\end{proof}

\section{Proof of Theorem \ref{Th2}}
\label{proofs_RB}

\subsection{Recollection on the coherent description of the affine Hecke category}
We start by recalling the equivalence between two categorifications of the affine Hecke algebra.

Consider the Steinberg variety of triples $\St=\widetilde{\Lg}\times _{\Lg} \widetilde{\N}$ for $\Lg$ and
 the affine flag variety $\Fl$ of $G$.
Let  $\P=\Perv_{I^0}(\Fl)$ be the category of perverse sheaves on equivariant with respect to the radical $I^0$ of an Iwahori group $I$.

\begin{thm} \cite{BHecke} \label{Two_cat}
a) There exists a natural equivalence
\begin{equation}\label{Haff}
D^b(\Coh^\LG(\St))\cong D^b(\P).
\end{equation}

For a representation $V$ of $\LG$ we have:

 b)  The equivalence \eqref{Haff}  intertwines the functor  $\F\mapsto F\otimes V$
 with the functor of convolution with $\sZ(V)$.

 c) Let $\bar{\tau}_V$ be the canonical endomorphism of $\O_{\Lg}\otimes V$ such that the action of $\tau_V$ on the fiber at $x$
 equals the action of $x$ on $V$; let $\tau_V$ be the pull back of $\bar{\tau}_V$ to an endomorphism of $\O_{\St}\otimes V$.
 Then \eqref{Haff} intertwines $\tau_V$ with the endomorphism induced by log monodromy.
 \end{thm}

\begin{rem}
There are also versions of the equivalence involving monoidal categories, a natural categorification
of the affine Hecke algebra. We chose to work with the present non-monoidal one for the sake of a technical simplification.
\end{rem}

Next, we recall an explicit description
of the $t$-structure on the left hand side of \eqref{Haff} corresponding to
 the natural $t$-structure on the right hand side.

 In \cite{BM} we defined the so called noncommutative Springer resolution.
 It is a $\sk$-algebra $A$  with a derived
 equivalence $D^b(A\modu_{\on{fg}})\cong D^b(\Coh(\widetilde{\Lg}))$
 (here $\sk=\overline\BQ_\ell$). We let $\Ap=A\otimes_{\Sym(\ft)}\sk$, then we also have:
\[D^b(\Ap\modu_{\on{fg}})\cong D^b(\Coh(\widetilde\N));\]
\begin{equation}\label{AA}
D^b(\bA\modu_{\on{fg}}^{\LG}) \cong D^b(\Coh^{\LG}(\St)),
\end{equation}
where $\bA=A\otimes_{\O(\Lg)} \Ap^{op}$.

Moreover, the composition of \eqref{AA} and \eqref{Haff}
 is an equivalence which satisfies the following property. Consider
the filtration on $D^b(\bA\modu_{\on{fg}}^{\LG})$ by  support as a module over the center;
this is a filtration indexed by the poset of nilpotent orbits in $\Lg$. Then both $t$-structures
are compatible with this filtration, and the induced $t$-structures on the associated graded category corresponding to
an orbit $O$  differ by shift by $\frac{1}{2}{\mathrm{codim}}(O)$, see \cite[Theorem 54]{BHecke}
corrected in \cite{BL}.

In other words we have the following

\begin{thm}\label{pervAmod} \cite{BHecke} \textup{a)} The category $D^b(\bA\modu_{\on{fg}}^{\LG})$
carries a unique (bounded) $t$-structure such that the forgetful
functor $D^b(\bA\modu_{\on{fg}}^{\LG})\to D^b(\Coh^\LG(\N))$
is $t$-exact where the target category is equipped with the perverse coherent
$t$-structure of middle perversity.

\textup{b)} The tautological $t$-structure on $D^b(\P)$ corresponds
to the $t$-structure from part (a) under the composition
of equivalences.
\end{thm}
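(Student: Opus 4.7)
The plan is to exploit a common filtration by support over the nilpotent cone $\N \subset \Lg$ on both categories in question, and to reduce both the construction and the identification of $t$-structures to their associated graded pieces.

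For part (a), first observe that through the equivalence (\ref{AA}) the center $\O(\Lg) \subset Z(\bA)$ acts on every object of $D^b(\bA\modu_{\on{fg}}^{\LG})$, inducing a filtration indexed by the poset of $\LG$-invariant closed subsets of $\N$, equivalently by closed unions of nilpotent orbits. The associated graded piece at an orbit $O$ of codimension $c = \codim(O)$ is equivalent to the derived category of $\LG$-equivariant coherent sheaves on the preimage of $O$ in $\St$. On this piece I would equip the category with the natural $t$-structure shifted by $c/2$, and glue these across the filtration by the standard recollement of $t$-structures. The forgetful functor to $D^b(\Coh^\LG(\N))$ respects the support filtrations, and on each associated graded piece it reduces (up to a smooth base change along the preimage of $O$) to a manifestly $t$-exact functor --- the shift by $c/2$ is precisely what the perverse coherent $t$-structure of middle perversity imposes on $\Coh^\LG(\N)$. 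Uniqueness follows from the same analysis: any $t$-structure with the stated compatibility must agree with the above prescription on each graded piece, hence globally by the gluing criterion.

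For part (b), I would compose (\ref{AA}) with (\ref{Haff}) and match $t$-structures via this common filtration. Under the equivalence, the center of the affine Hecke category acts on $D^b(\P)$ through $\O(\Lg)^\LG$, and transporting the support filtration from the coherent side yields a filtration on $D^b(\P)$ by subcategories generated by $I^0$-equivariant perverse sheaves whose ``central character'' lies in a given closed union of nilpotent orbits. By the uniqueness established in (a), it suffices to verify orbit-by-orbit that the perverse $t$-structure on $D^b(\P)$ matches the shifted-by-$c/2$ natural $t$-structure on each associated graded. The key input is a dimension count relating the dimension of the relevant Iwahori-orbit strata in $\Fl$ to the codimension $c$ of the orbit $O$ in $\N$, essentially the Springer-fiber dimension formula used in \cite{BM}.

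The main obstacle is precisely this orbit-by-orbit comparison. One must identify the associated graded on both sides as equivariant coherent sheaves on the same scheme (a twisted form of the normal bundle to an orbit), and verify that the perverse shift matches $c/2$ uniformly, including in odd codimension where half-integer shifts occur. This half-integer subtlety is exactly the point that required the correction in \cite{BL}. Once this matching is secured, the gluing of $t$-structures across the nilpotent filtration delivers both the existence and uniqueness in (a) and the compatibility asserted in (b) in a single stroke.
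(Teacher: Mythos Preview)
Your proposal is correct and follows the same strategy the paper summarizes: the theorem is quoted from \cite{BHecke} (with the correction in \cite{BL}), and the paper does not give an independent proof but only records the key mechanism---the filtration by nilpotent support and the shift by $\frac{1}{2}\codim_\N(O)$ on each associated graded piece---which is exactly the backbone of your argument. One minor remark: your worry about odd codimension and half-integer shifts is unnecessary, since all nilpotent orbits (and $\N$ itself) have even dimension, so $\frac{1}{2}\codim_\N(O)$ is always an integer; the correction in \cite{BL} concerns a different point.
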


Let $\bA\modu^\LG_{\on{perv}}$ denote the heart of the $t$-structure from Theorem \ref{pervAmod}a),
its objects will be called perverse $\bA$-modules. Recall that an irreducible equivariant perverse coherent sheaf
of middle perversity can be described as an intermediate extension of an irreducible equivariant vector bundle from an
orbit, see \cite{AB}. Thus we obtain

\begin{cor}
The above equivalence sends irreducible objects $\IC_w$ in $\P$
to objects of the form $j_{!*}^O(\EE_L[d_w])$ where $O=\LG(e)$ is a nilpotent orbit,  $L$
is an irreducible $Z(e)$-equivariant $\bA_e$-module and $\EE_L$ is the corresponding
equivariant module for $\bA|_O$ and $d_w=\frac{\codim_\N(O_w)}{2}$ (here $e\in O$ is an element, $Z(e)$ is its centralizer and
$\bA_e=\bA\otimes _{\O(\N)}\sk_e$ is the  quotient of $\bA$ by the corresponding maximal
ideal in the center).
It induces a bijection between the two classes of objects.
\end{cor}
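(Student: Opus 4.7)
The plan is to deduce the corollary from Theorem \ref{pervAmod} together with the standard description of simple equivariant perverse coherent sheaves of middle perversity on $\N$ given in \cite{AB}.

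First, Theorem \ref{pervAmod}(b) says the composite equivalence $D^b(\P) \iso D^b(\bA\modu_{\on{fg}}^\LG)$ is $t$-exact for the tautological $t$-structure on the left and the $t$-structure from Theorem \ref{pervAmod}(a) on the right. It therefore restricts to an equivalence of hearts, and in particular induces a bijection between the sets of simple objects. It thus suffices to classify simple objects in $\bA\modu^\LG_{\on{perv}}$, show that each $\IC_w$ is matched with one of the form $j^O_{!*}(\EE_L[d_w])$, and verify the shift.

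Second, I classify simple perverse $\bA$-modules by a standard support/intermediate-extension argument. A simple object $M \in \bA\modu^\LG_{\on{perv}}$ has irreducible set-theoretic support in $\N$, so by $\LG$-equivariance this support is the closure $\ol O$ of a single nilpotent orbit $O = \LG(e)$. Restricting $M$ to $j^O \colon O \hookrightarrow \ol O$ yields, up to shift, a $\LG$-equivariant $\bA|_O$-module that is a vector bundle $\EE_L$; by equivariant descent along $\LG \to O$, $g \mapsto g \cdot e$, the fiber at $e$ is a $Z(e)$-equivariant $\bA_e$-module $L$, irreducible because $M$ is simple. Conversely, every such $L$ reconstructs $\EE_L$, and $M$ is recovered as the intermediate extension $j^O_{!*}(\EE_L[d])$, with the shift $d = \codim_\N(O)/2$ dictated by middle perversity (an integer since $\LG$-orbits in $\N$ have even codimension). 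By Theorem \ref{pervAmod}(a), the forgetful functor to $\Perv\Coh^\LG(\N)$ is $t$-exact, so this is precisely the shift at which $j^O_{!*}(\EE_L[\cdot])$ lies in $\bA\modu^\LG_{\on{perv}}$.

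Third, match $\IC_w$ with its spectral image. By $t$-exactness, $\IC_w$ maps to some simple perverse $\bA$-module, hence to $j^O_{!*}(\EE_L[d])$ with $d = \codim_\N(O)/2$ as above. The identification of the supporting orbit $O$ with $O_w$ (and therefore of $d$ with $d_w$) uses the compatibility of the equivalence of \cite{BHecke} with the support filtration indexed by nilpotent orbits, namely \cite[Theorem 54]{BHecke} as corrected in \cite{BL}: it tells us that the image of $\IC_w$ has support exactly $\ol{O_w}$. The bijection between the two classes of objects is then formal from the equivalence of hearts.

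The main obstacle is the third step: extracting $O_w$ and the corresponding irreducible $Z(e)$-equivariant $\bA_e$-module $L$ attached to $w$ requires precisely the support-filtration compatibility that is the technical heart of \cite{BHecke}; once this is available, the form $j^O_{!*}(\EE_L[d_w])$ and the bijectivity follow formally from $t$-exactness and the \cite{AB} classification.
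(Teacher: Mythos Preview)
Your proposal is correct and follows the same approach as the paper, which deduces the corollary in one line from the \cite{AB} classification of irreducible equivariant perverse coherent sheaves of middle perversity together with the $t$-exactness of Theorem~\ref{pervAmod}. One small point: your third step is superfluous, since in the paper the pair $(O_w,L_w)$ is \emph{defined} immediately after the corollary to be the orbit and module arising from this very correspondence, so no independent identification via the support filtration is needed.
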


We denote the $(O_w,L_w)$ the orbit and the equivariant module corresponding to $\IC_w$.

\subsection{End of proof}
Set $\bA_V=\Sym(V)\otimes_\sk \bA$.
As an immediate consequence of \eqref{AA} we get:

\begin{equation}\label{AAV}
D^b(\bA_V\modu_{\on{fg}}^{\LG}) \cong D^b(\Coh^{\LG}(\St\times V^*)).
\end{equation}

By a graded $\Sym(V)$-module in $\bA\modu^\LG_{\on{perv}}$ we mean
a collection of objects $\CG_n\in \bA\modu^\LG_{\on{perv}},\ n\in \Zet$, together with morphisms
$V\otimes\CG_n\to\CG_{n+1}$ satisfying the obvious compatibility.

Such a module is called
free if it has the form $\CG_n=\Sym^{n-d}(V)\otimes\CG$ for some
$d\in\BZ,\ \CG\in\bA\modu^\LG_{\on{perv}}$.

\begin{lem}
  \label{real}
  Let $\Hop$ denote the homotopy category of finite complexes of free graded  $\Sym(V)$-modules
in $\bA\modu_{\on{perv}}$.

We have a natural realization functor
\[\Hop \to D^b(\bA_V\modu_{\on{fg}}^\LG)\cong D^b(\Coh^{\LG\times \Gm}(\St\times V^*)).\]
\end{lem}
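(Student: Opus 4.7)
I would construct the functor as an instance of Beilinson's realization, using a perverse $t$-structure transferred from the one of~Theorem~\ref{pervAmod}(a) along the structure map $\bA\to\bA_V$.

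First, I would install a $t$-structure on $D^b(\bA_V\modu^\LG_{\on{fg}})$ by transfer: since $\bA_V=\Sym(V)\otimes_\sk\bA$ is graded by $\Sym(V)$-degree, any object decomposes as a $\BZ$-indexed direct sum of complexes of $\bA$-modules, and I declare such an object to lie in perverse degree $\geq 0$ (resp.\ $\leq 0$) iff each of its graded pieces does, in the sense of~Theorem~\ref{pervAmod}(a). The perverse truncation functors on $\bA$-module complexes are functorial and act piece-by-piece, so they commute with $\Sym(V)$-multiplication (which merely shifts the grading); hence this defines a bounded $t$-structure. Its heart $\mathcal{A}$ is, by construction, precisely the category of graded $\Sym(V)$-modules in $\bA\modu^\LG_{\on{perv}}$ in the sense used in the excerpt, and the free such modules form a full additive subcategory $\mathcal{F}\subset\mathcal{A}$.

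Second, I would appeal to the Beilinson realization functor: the heart of a bounded $t$-structure on any triangulated category $\mathcal{D}$ admitting a filtered or dg enhancement — automatic for $\mathcal{D}=D^b(\bA_V\modu^\LG_{\on{fg}})$ — carries a canonical $t$-exact functor $\real\colon D^b(\mathcal{A})\to\mathcal{D}$ extending the tautological inclusion of $\mathcal{A}$. Pre-composing with the evident sequence
\[\Hop\ \longrightarrow\ K^b(\mathcal{A})\ \longrightarrow\ D^b(\mathcal{A})\ \xrightarrow{\ \real\ }\ D^b(\bA_V\modu^\LG_{\on{fg}}),\]
and post-composing with the equivalence~\eqref{AAV} (in which the $\Sym(V)$-grading on the module side corresponds to the $\Gm$-weight decomposition of $\Coh(V^*)$ under the scaling action on $V^*$), yields the desired realization $\Hop\to D^b(\Coh^{\LG\times\Gm}(\St\times V^*))$.

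The argument is formal. The only substantive point is the $t$-structure transfer in the first step, which reduces to checking that the perverse truncation of an $\bA$-module complex can be performed graded-piece-by-graded-piece on a $\Sym(V)$-equivariant complex; this is immediate from the functoriality of truncation. No further geometric input beyond~Theorem~\ref{pervAmod} is needed.
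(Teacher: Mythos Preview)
Your proposal is correct and is essentially the same as the paper's: the paper's proof is the single line ``Follows from the standard application of filtered derived categories, see~\cite{bei1}'', i.e.\ exactly Beilinson's realization functor that you invoke. Your elaboration of the $t$-structure transfer to $D^b(\bA_V\modu_{\on{fg}}^\LG)$ via graded pieces is a reasonable way to flesh out what the paper leaves implicit.
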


\proof Follows from the standard application of filtered derived categories, see~\cite{bei1}. \qed

Recall the object $\cE^w\in D^b\Coh^{G^\vee}(\St\times V^*)$ introduced right before~Theorem~\ref{Th2}.
In view of Lemma~\ref{real} we can upgrade  $\cE^w$ to an object of  $D^b(\bA_V\modu_{\on{fg}}^\LG)$.

We are now ready to state a key property of this object which formally implies~Theorem~\ref{Th2}.

For an orbit $O\subset \N$ we set $\partial O=\overline{O} \setminus O$ where $\overline{O}$ is the closure of $O$.

\begin{prop}\label{abc}
\textup{a)} The object $\cE^w$ is (set-theoretically) supported on $Z_\rho$.

\textup{b)}  The object $\cE^w$ is (set-theoretically) supported on the closure of $O_w$.

\textup{c)} The restriction of $\cE^w$ to the open set $\N\setminus \partial O_w$ is isomorphic to
$\EE_{L_w}\otimes \Sym(\EE_{V_e})[d_w]$ where $e\in O$, $V_e$ is the space of coinvariants
of $e$ acting on $V$ and $\EE_{V_e}$ is the corresponding equivariant vector bundle on $O$.
\end{prop}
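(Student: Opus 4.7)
The plan is to transport the computation to the coherent side via the Bezrukavnikov equivalence~\eqref{AA} and~Lemma~\ref{real}, using the explicit description $\oplusl_n\calF_{(n)}\cong\calK_{s1}^\bullet$ from~Corollary~\ref{calcul sym} and~Corollary~\ref{free support}. The essential input from~\cite{BHecke} is the translation of Gaitsgory's central sheaf $\sZ(V)$ into the tautological coherent object $V\otimes\O_\St$ (with its $\LG$-equivariant structure), together with the identification of the logarithm of monodromy $s\colon\sZ(V)\to\sZ(V)(-1)$ with the action of the tautological nilpotent section $x\in\Gamma(\St,\gvee\otimes\O_\St)$ on $V\otimes\O_\St$. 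Under this dictionary, the 2-term complexes $\calH_s$ and $\calH_1$ correspond to $[V\otimes\O_\St\xrightarrow{x}V\otimes\O_\St]$ and $[V\otimes\O_\St\xrightarrow{\id}V\otimes\O_\St]$ respectively, and $\sfs$ becomes the evident morphism of their symmetric algebras.

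Part (a) then follows directly from~Corollary~\ref{free support}: the $\Sym(V)$-module structure on $\calE^w$ furnishes the $V^*$-direction in $\St\times V^*$ (since $V^*=\Spec\Sym(V)$), and the support condition $\ker s$ becomes the vanishing of $\xi\in V^*$ on $xV\subset V$, i.e., $x(\xi)=0$, which is the defining condition of $Z_\rho$. For (b), the $n=0$ component of $\oplus_n\calF_{(n)}$ is $\calF=\IC_w$, whose coherent realization is $j^{O_w}_{!*}(\EE_{L_w}[d_w])$, set-theoretically supported on $\overline{O_w}$; each higher $\calF_{(n)}$ is built as a subquotient of $\calF\star(\sZ V)^{\star n}$, which on the coherent side amounts to tensoring with the locally free sheaf $V^{\otimes n}\otimes\O_\St$, preserving set-theoretic support on $\N$.

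For part (c), by (b) the restriction of $\calE^w$ to $\N\setminus\partial O_w$ is concentrated over the open orbit $O_w$, so we may work fiberwise at a base point $e\in O_w$. In symmetric-power degree $n$, the image of $\Sym^n\sfs$ has cohomological-degree-$(-i)$ piece equal to $\Lambda^i(eV)\otimes\Sym^{n-i}V$ at the coherent level, with the differential inherited from $\Sym^n\calH_1$ acting as the Koszul-type differential $(w_1\wedge\cdots\wedge w_i)\otimes f\mapsto\sum_k(-1)^{k-1}(w_1\wedge\cdots\hat{w}_k\cdots\wedge w_i)\otimes w_k\cdot f$ with $w_j\in eV$. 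This is precisely the degree-$n$ part of the Koszul resolution of $\Sym V_e=\Sym V/(eV)\cdot\Sym V$ attached to the linear ideal $(eV)\cdot\Sym V\subset\Sym V$; since this resolution is exact, the image complex has cohomology concentrated in degree~$0$ with value $\Sym^n(V_e)$. Globalizing over $O_w$ yields the equivariant vector bundle $\Sym^n\EE_{V_e}$, and assembling across $n$ via the $\Sym V$-module structure together with $\calF|_{O_w}=\EE_{L_w}[d_w]$ gives the required isomorphism $\calE^w|_{(\N\setminus\partial O_w)\times V^*}\cong\EE_{L_w}\otimes\Sym(\EE_{V_e})[d_w]$.

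The main obstacle is the compatibility between the ``image'' of $\sfs$, taken in the abelian category of complexes of $I$-equivariant perverse sheaves on $\Fl$, and the coherent Koszul picture above: although $\Sym^n\calH_1$ is acyclic as a coherent complex, its perverse image under $\Sym^n\sfs$ is not, so one must verify that the perverse-image calculation is transported correctly by the realization functor of~Lemma~\ref{real}. The key technical tool is~Theorem~\ref{pervAmod}: the AB $t$-structure matches the natural coherent $t$-structure up to a codimension shift along each orbit, and over the open stratum $O_w$ this shift is zero, so the perverse-image calculation is genuinely governed by the Koszul computation.
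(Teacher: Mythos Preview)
Your argument follows essentially the same route as the paper's: part (a) from Corollary~\ref{free support}, part (b) from the fact that convolution with $\sZ V$ corresponds on the coherent side to tensoring with the free module $V\otimes\O_\St$ together with compatibility of the perverse $t$-structure with the filtration by nilpotent orbits, and part (c) via the explicit Koszul description over $O_w$ using Theorem~\ref{pervAmod}. One slip in your final paragraph: the shift between the two $t$-structures over $O_w$ is $d_w=\tfrac{1}{2}\codim_\N O_w$, not zero; what makes the perverse image computation agree with the coherent one is that this shift is \emph{uniform} across the orbit (so taking images commutes with it), and this uniform shift is precisely the source of the $[d_w]$ in the answer.
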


\proof a) is immediate from Corollary \ref{free support}. b) is also clear from the construction:
the image of $\IC_w$ under \eqref{Haff}, \eqref{AA}  is supported on $\overline{O}_w$ by definition
of the latter, hence so are the images of the terms of the complexes
$\IC_w\star\sK_s^\bullet$, $\IC_w\star\sK_1^\bullet$.

Since the $t$-structure is compatible with the filtration by nilpotent orbits, we see that the image of the map
between the terms of the two complexes is also supported on $\overline{O}_w$.

To prove c) consider the image of the complex   $\sK_{s1}^\bullet$ (constructed from $\F=\IC_w$) under the equivalence
\eqref{AA} restricted to the open part $\N\setminus \partial O_w$ of its support, we denote it by $K_w$.
We provide an explicit description of $K_w$ as follows.
Consider the vector bundle $\EE_{V_e}$ on $O_w$. Its pull back to
 $O_w\times V^*$ carries a tautological section, let
$\KK_{O_w}$ denote the corresponding Koszul complex.

We claim that
\begin{equation}\label{compl_iso}
K_w\cong \KK_{O_w} \otimes \EE_{L_w}[d_w].
\end{equation}

By Theorem \ref{pervAmod}b) the induced equivalence between equivariant $\bA$-modules supported
on the orbit and the corresponding subquotient\footnote{This is the subquotient corresponding
to the 2-sided cell in the affine Weyl group, though we do not use this description here.}  of $D^b(\P)$
is $t$-exact up to the shift by $d_w$.

 Thus the isomorphism \eqref{compl_iso} follows from Theorem \ref{Two_cat}:  the Koszul complex
 $\KK_{O_w}$ is isomorphic to the image of the Koszul complex associated to the tautological
 section $v^*$ of the trivial vector bundle $V^*\otimes \O_{O_w\times V^*}$ mapping to the Koszul
 complex of the same bundle with the section $e(v^*)$ where $e$ is the tautological section
 of the bundle with fiber $\Lg$ on $O_w$.

 Clearly $H^i(\KK_w)=0$ for $i\ne 0$ while $H^0(\KK_w)=\O(Z_\rho\cap \pi^{-1}(O_w))$, which yields~c). \qed

 Theorem \ref{Th2} is a direct corollary of Proposition \ref{abc}.

 \section{Appendix: relation to arcs into semi-groups}
 \label{semi-groups}

\subsection{From semi-groups to representations}\label{sg-to-rep}
Let $\oG$ be a normal affine algebraic semi-group whose group of invertible elements is $G$.
Let also $\chi\colon G\to \GG_m$ be a character which extends to a regular morphism $\oG\to \AA^1$.
Let $\overline{\Lam}$ denote the subset of those coweights $\lam\colon \GG_m\to T\subset G$ which extend
to a morphism $\AA^1\to \oG$. It is easy to see that $\overline{\Lam}$ is a cone inside $\Lam$.
Let us denote by $\Lam_+$ the set of dominant coweights of $G$; set $\overline{\Lam}_+=\Lam_+\cap \overline{\Lam}$.
We say that an element $\lam\in \overline{\Lam}_+$ is indecomposable if it cannot be represented as a sum of two other elements of $\overline{\Lam}_+$. It is easy to see that the set of indecomposable elements is finite. We denote by $S$ the set of indecomposable elements of $\overline{\Lam}_+$ and by $S_{\max}$ the subset of $S$ consisting of those elements of $S$ which are maximal with respect to the standard partial order on $\Lam$.
We now define the  representation $(V,\rho)$ of $G^{\vee}$ to be the direct sum of $V(\lam)$ where
$\lam$ runs over all elements of $S_{\max}$.

\medskip
\noindent
{\bf Example.} Let $\oG$ consist of pairs $(A,x)$ where $A$ is a two-by-two matrix, and $x\in \AA^1$ such that
$\det(A)=x^n$ for some $n>0$. Then $G$ is isomorphic to $\GL(2)$ if $n$ is odd and to $\SL(2)\times \GG_m$ is $n$ is even; so the dual group is either $\GL(2)$ or $\PGL(2)\times \GG_m$.
The lattice $\Lam$ naturally identifies with triples $(a,b,c)\in \ZZ^3$ with $a+b=nc$; the cone $\Lam_+$ consists of triples as above such that $a\geq b$. The cone $\overline{\Lam}_+$ consists of triples as above such $a,b,c\geq 0$.
Then it is easy to see that indecomposable elements are those of the form $(a,b,1)$ with $a+b=n$ and $a\geq b$. Among those the element $(n,0,1)$ is maximal, so $S_{\max}$ consists of one element. The representation $\rho$ is
then isomorphic to the $n$-th symmetric power of the standard representation of $\GL(2)$ (note that when $n$ is even it can naturally be regarded as a representation of $\PGL(2)\times \GG_m$).

\subsection{The conjecture}
Let $\oX$ be an algebraic variety with a smooth open dense subset $X$. Let $\oX_{\sO}$ denote the scheme of formal arcs into $\oX$, i.e. the scheme of maps $\Spec(\sO)\to \oX$. Let $\oX{}_\sO^0$ denote the open sub-scheme of $\oX_{\sO}$ consisting of those maps $\Spec(\sO)\to \oX$ which send $\Spec(\sF)$ to $X$.

Assume that both $X$ and $\oX$ are affine. Then we have an ind-scheme $X_{\sF}$ parametrizing maps $\Spec(\sF)\to X$, and similarly for $\oX$. In addition, let $\oX{}_\sF^0$ denote the open sub-scheme of those maps
$\Spec(\sF)\to \oX$ which land in $X$. We have a natural map of ind-schemes $X_{\sF}\to\oX{}_\sF^0 $.
This map induces a bijection between $\FF_q$-points of $\oX{}_\sF^0$ and $X_{\sF}$; however, scheme-theoretically it is usually far from being an isomorphism; typically $\oX{}_\sF^0$ has a stratification such that on the level of reduced schemes $X_{\sF}$ is isomorphic to the disjoint union of the corresponding strata; in particular, on every connected component of $X_{\sF}$ the above map is a locally closed embedding on the level or reduced schemes.

Ideally, we would like to perform the following procedure: start with an irreducible perverse sheaf
$\CF$ on $X_{\sF}$, think of it as a sheaf on a locally closed subset $\oX{}_\sF^0$ and take its
Goresky-Macpherson extension to $\oX{}_\sF^0$; however, technically, at the moment we do not have
the language to do this (for example, because we don't know how to define the category of perverse
sheaves on $\oX{}_\sF^0$). On the other hand, it turns out, that one can define the corresponding
function on $\oX{}_\sF^0(\FF_q)=X_{\sF}(\FF_q)=X(\sF)$. To simplify the discussion we shall do it in
the case when $\CF$ is an irreducible perverse sheaf on $X_{\sO}$ (which is a closed subset of
$X_{\sK}$); later on we shall apply it to the case when $X=G$ and $\oX=\oG$.

Note that the closure of $X_{\sO}$ in $\oX{}_\sF^0$ is $\oX{}_\sO^0$. According to~\cite{Dr}
and~\cite{GrK} the formal  neighbourhood of every point $x(t)$ of $\oX{}_\sO^0$ is isomorphic to a
(completed) product $\widehat{\AA^{\infty}_0}\times \hatZ_{z_0}$ where:

1) $\AA^{\infty}$ denotes the infinite-dimensional affine space, i.e.\
the affine scheme $\Spec \FF_q[x_1,x_2,\ldots]$;

2) $\widehat{\AA^{\infty}_0}$ is the formal neighbourhood of the point $0$ in $\AA^{\infty}$;

3) $Z$ is a scheme of finite type over $\FF_q$, $z_0\in Z(\FF_q)$ and $\hatZ_{z_0}$ denotes the formal neighbourhood of $z_0$ in $Z$.

\noindent
If $\CF$ is an irreducible perverse sheaf on $X_{\sO}$, then properties 1--3 above allow us to define
canonically the stalk of the would be Goresky-Macpherson extension of $\CF$ to $\oX{}_\sO^0$ at
$x(t)$ (we refer the reader to \cite{BNS,ngo} for details). In particular, we can take the trace of
Frobenius on this stalk and get an element of $\qlb$. So, if we denote the locally closed embedding
$X_{\sO}\to \oX{}_\sO^0$ by $i$, then we can't make sense of $i_{!*}\CF$, but we can make sense of the
function $f_{i_{!*}\CF}\colon\oX{}_\sO^0(\FF_q)\to \qlb$ (which would correspond to $i_{!*}\CF$ by the
Grothendieck sheaf-function correspondence  if $i_{!*}\CF$ were defined).

Let now $a_{\chi}\colon\sK_0(\Rep(G^{\vee}))\to \sK_0(\Rep(G^{\vee}))$ be the map that sends the class $[V]$ of every irreducible representation $V$ of $G^{\vee}$ to $q^{i(\chi,V)}[V]$ where $i(\chi,V)\in \ZZ$ is such that the action of $\GG_m$ on $V$ given by the cocharacter $\chi^{\vee}$ of $Z(G^{\vee})$ is given by the character $z\mapsto z^{i(\chi,V)}$.
\begin{conj}\label{semi}
\begin{enumerate}
\item
Assume that $\CF$ is the constant sheaf on $G_{\sO}$. Then
the function $f_{i_{!*}\CF}$ is equal to
$S(a_{\chi}([\Sym(V)])$ for $\rho$ as in~\S\ref{sg-to-rep} (recall that $S$ denotes the Satake
isomorphism).
\item
  Assume that $(V,\rho)$ is irreducible. For any $n\in \ZZ$ let $G_n$ be the (open and closed) subset
  $v(\chi(g))=n$ of $G_{\sF}$. Assume that $\CF$ is supported on $G_0$ (but equivariance of $\CF$
  is not assumed here). Then
$(f_{i_{!*}\CF})|_{G_n}\simeq q^{n\cdot i(\chi,V)}f_{\CF_{\rho,(n)}}$.
\end{enumerate}
\end{conj}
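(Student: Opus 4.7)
The plan is to compare two descriptions of the same local singularities: the arc space $\oG{}^0_\sO$ on one side, and the ramified Beilinson-Drinfeld Grassmannian $\Gr_{x_0,n}$ of \S\ref{basic} on the other. Both constructions model the failure of $G_\sF$ to sit smoothly inside a larger ambient space at points where $\chi$ has positive valuation, and the conjecture asserts that they give isomorphic IC sheaves up to a specific Tate twist.

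First I would fix a point $g(t) \in \oG{}^0_\sO(\FF_q)$ with $n := v(\chi(g))$ and invoke the Drinfeld-Grinberg-Kazhdan factorization recalled in the appendix to reduce to a statement about the finite-dimensional factor $\widehat{Z}_{z_0}$ (the smooth factor $\widehat{\AA^\infty_0}$ contributes $1$ to the trace of Frobenius on IC stalks). Next I would construct an \'etale neighborhood-level morphism from $\widehat{Z}_{z_0}$ to the fiber of $\pi_n\colon \Gr_{x_0,n} \to \Sym^n C$ over the point $n\cdot x_0$, matching the locally closed embedding of $G_\sO$ into $\oG{}^0_\sO$ with the closed embedding $i_n\colon G_\sF \hookrightarrow \Gr_{x_0,n}$. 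The choice of $\rho$ from \S\ref{sg-to-rep} is tailored precisely for this step: by Vinberg's analysis, decompositions of a given $\chi$-degree $n$ element into indecomposable $\overline{\Lam}_+$-coweights correspond bijectively to ways of spreading $n$ marked points in $C$ with labels in $S_{\max}$, so the BD factorization exactly models the stratification of $\oG{}^0_\sO$.

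Granting this identification, part (2) follows by matching IC extensions stratum by stratum: the sheaf $i_{!*}\CF$ restricted to the $v(\chi)=n$ locus of $\oG{}^0_\sO$ is identified with $\XF_{\rho,(n)}$ of \eqref{XF_n} (up to the Tate twist built into the factor $\ol\BQ_\ell[n](n/2)$ in $\CF^\dagger_n$ and a further twist by $q^{n\cdot i(\chi,V)}$ reflecting the $\chi$-graded structure on $\oG$), so the trace of Frobenius comparison produces the factor $q^{n\cdot i(\chi,V)}$ on the right-hand side. Part (1) is then the specialization to $\CF$ being the constant sheaf on $G_\sO$: by \eqref{spherical} the contribution on $G_n$ equals $q^{n\cdot i(\chi,V)} f_{\sfS(\Sym^n V)}$, and summing over $n$ and applying \eqref{satake} produces exactly $S(a_\chi([\Sym V]))$.

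The hard part will be making the geometric identification of step two precise. Heuristically, one is spreading a thick $n$-fold modification concentrated at $x_0$ into $n$ simple modifications at nearby points; this is transparent on $\FF_q$-points but a scheme-theoretic statement requires a functorial presentation of $\oG$ via the $\overline{\Lam}_+$-Vinberg monoid together with a compatibility theorem identifying its arcs with the BD factorization for $\rho = \bigoplus_{\lambda\in S_{\max}} V(\lambda)$. The combinatorial heart of this compatibility should be the statement that the monoid of arcs modulo the smooth factor is controlled by $\overline{\Lam}_+$-partitions of $n\cdot\chi^\svee$, which matches the stratification of $\Gr_{x_0,n}$ by dominance. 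Bookkeeping of Tate twists and of the signs coming from the Kazhdan-Lusztig involution (cf.~Remark~\ref{rem_costalk}) will require care but should be routine once the geometric comparison is in place.
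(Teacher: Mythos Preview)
This statement is labeled a \emph{conjecture} in the paper and is not proved there; there is therefore no ``paper's own proof'' to compare your attempt against. After stating it, the authors only offer remarks: part~(1) is attributed to \cite{BNS} when $G$ is a torus or $(V,\rho)$ is irreducible, and for the general statement they write that it ``should not be difficult to deduce'' from a Proposition (proved in \cite{BNS} and \cite{bo}) asserting a smooth equivalence of formal neighbourhoods
\[(\widehat{\oG{}_\sO^0})_{g(t)}\times\widehat{\AA}^{\infty}_0\ \simeq\ (\widehat{\Gr_{\oG,n}})_{(\CP,\kappa)}\times\widehat{\AA}^m_0,\]
where $\Gr_{\oG,n}$ is the \emph{unramified} Beilinson--Drinfeld Grassmannian built from the semigroup $\oG$ (not the ramified $\Gr_{x_0,n}$ of \S\ref{basic}). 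No further details are given.

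Your outline has the same overall shape as that remark --- Drinfeld--Grinberg--Kazhdan reduction, then match IC stalks on a finite-type model --- but you attempt to compare $\widehat{Z}_{z_0}$ directly with the ramified Grassmannian $\Gr_{x_0,n}$, whereas the paper's suggested route passes through the intermediate space $\Gr_{\oG,n}$. Note also that the fibre of $\pi_n$ over $n\cdot x_0$ is just $G_\sF$ and carries no singularity information by itself; what is actually needed is an identification of formal neighbourhoods \emph{inside the total space}, and the cited Proposition is precisely the available tool (modulo a further comparison of $\Gr_{\oG,n}$ with the relevant closure inside $\Gr_{x_0,n}$). Your Vinberg-combinatorial heuristic plausibly describes the stratification but does not produce that formal-neighbourhood isomorphism; you rightly flag this as the hard step, and the paper leaves it open as well. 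Finally, your deduction of (1) from (2) presupposes $(V,\rho)$ irreducible, which is the hypothesis of (2) but not of (1).
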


Let us make several remarks.
First, in case when $\CF$ is the constant sheaf on $G_{\sO}$, Conjecture \ref{semi} is proved
in~\cite{BNS} either when $G$ is  torus, or when $(V,\rho)$ is irreducible. We believe that it
should not be hard to extend the arguments of~\cite{BNS} to arbitrary $\CF$.

Second, it should not be difficult to deduce Conjecture \ref{semi} from the following statement,
which is proved in~\cite{BNS}.
For $n>0$ consider the closed subset of the (usual, not ramified)  Beilinson-Drinfeld Grassmannian
$\Gr_{\oG,n}$ consisting of pairs $(\calF,\kappa)$ where $\CP$ is a principal $G$-bundle on the curve
$C$, and $\kappa$ is an isomorphism between $\CP$ and the trivial bundle $\CP^0$ away from points
$(x_0,x_1,\ldots, x_n)$ (here as before $x_0$ is fixed and the points $x_i$ for $i>0$ move around $C$)
such that the rational composition map $C\to C\times G=\CP^0\to\CP$ extends to a regular map
$C\to\oG\overset{G}\times\CP$.

Let now $(\CP,\kappa)$ be a point of $\Gr_{\oG,n}$ as above such that $\kappa$ is actually an
isomorphism between $\CP$ and $\CP^0$ on $C\setminus\{x_0\}$ (i.e. we have $x_i=x_0$ for all $i$). Now if we choose a trivialization
of $\CP$ on the formal neighbourhood of $x_0$ and also choose a formal parameter $t$ near $x_0$,
then $(\CP,\kappa)$ defines a point $g(t)$ of $\oG{}_\sO^0$.
The following proposition is proved in~\cite[Propositions 2.1,2.2]{BNS} and~\cite[Proposition 11]{bo}.

\begin{prop}
For some integer $m>0$ we have an isomorphism of completed products
\[(\widehat{\oG{}_\sO^0})_{g(t)}\ \times \ \widehat{\AA}^{m}_0\simeq (\widehat{\Gr_{\oG,n}})_{(\CP,\kappa)}\
\times\ \widehat{\AA}^\infty_0.\]
In other words, the scheme $\Gr_{\oG,n}$ (which is of finite type over $\FF_q$) essentially plays the
role of the scheme $Z$ in the Drinfeld-Grinberg-Kazhdan theorem (up to smooth equivalence of formal
neighbourhoods).
\end{prop}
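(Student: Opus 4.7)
The plan is to combine the Drinfeld-Grinberg-Kazhdan factorization recalled in the paragraph preceding the statement with an explicit construction of a formally smooth morphism from the ramified BD Grassmannian into the arc space. Applying DGK directly to the finite-type scheme $\oG$ at the arc $g(t)$ -- which generically meets the smooth locus $G\subset\oG$ by hypothesis -- one obtains an isomorphism
\[(\widehat{\oG{}_\sO^0})_{g(t)}\simeq\widehat{Z}_{z_0}\times\widehat{\AA}^\infty_0\]
for \emph{some} pointed finite-type $(Z,z_0)$ over $\FF_q$. Any two such factorizations differ only by a smooth finite-dimensional factor, so the content of the proposition is to exhibit $(\Gr_{\oG,n},(\CP,\kappa))$ as an allowable choice of $Z$.

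I would then construct a natural comparison morphism $\Phi$ as follows. Given a deformation $(\CP',\kappa',x_1',\ldots,x_n')$ of $(\CP,\kappa,x_1,\ldots,x_n)$ in $\Gr_{\oG,n}$, together with an infinitesimal modification of the chosen trivialization of $\CP$ near $x_0$ (parametrized by the completed positive loop group of $G$ at the identity), one produces a new arc $g'(t)\in\oG{}_\sO^0$ by reading off, in the formal disc at $x_0$, the rational map from $C$ to $\oG$ determined by $\kappa'$ together with the chosen trivialization. The extension condition built into the definition of $\Gr_{\oG,n}$ ensures that $g'(t)$ really lands in $\oG$ as an arc. Thus $\Phi$ takes the product of $(\widehat{\Gr_{\oG,n}})_{(\CP,\kappa)}$ with the completed positive loop group at $e$ into $(\widehat{\oG{}_\sO^0})_{g(t)}$.

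The tangent space to $\oG{}_\sO^0$ at $g(t)$ splits into contributions from (a) isomorphism-class deformations of $(\CP,\kappa)$, (b) motion of the points $x_i$ away from $x_0$, and (c) change of trivialization at $x_0$. The first two are visible inside $\Gr_{\oG,n}$; the third is the loop group factor. A Kodaira-Spencer type computation then identifies the tangent map of $\Phi$ and shows its cokernel is an infinite-dimensional smooth space parametrizing global deformations of $\CP$ on $C$ away from all marked points. This yields the asserted $\widehat{\AA}^m_0$ factor on the $\Gr_{\oG,n}$-side and the $\widehat{\AA}^\infty_0$ factor on the arc-space side.

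The main obstacle is to upgrade this infinitesimal comparison to an equality of completed local rings of the precise form stated. The cleanest route is to invoke the uniqueness part of DGK: once $\Phi$ is established to be formally smooth of the correct relative dimension at the base point, the two factorizations of $(\widehat{\oG{}_\sO^0})_{g(t)}$ as a finite-type formal scheme times $\widehat{\AA}^\infty_0$ must agree up to a smooth finite-dimensional factor, whose dimension fixes the integer $m$. The technically delicate step is the verification of formal smoothness of $\Phi$ at points where the arc meets the singular locus $\oG\setminus G$ at $x_0$; controlling the singular contribution relies crucially on the extension condition built into $\Gr_{\oG,n}$, and I expect this to be the main technical effort.
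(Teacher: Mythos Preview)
The paper supplies no proof of this proposition: the sentence immediately preceding the statement records that it ``is proved in~\cite[Propositions~2.1,~2.2]{BNS} and~\cite[Proposition~11]{bo},'' and nothing further is given. So there is no in-paper argument to compare against, only the cited references.

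Your overall architecture --- invoke the Drinfeld--Grinberg--Kazhdan theorem at $g(t)$, then exhibit $\Gr_{\oG,n}$ as an admissible finite-type model via a Beauville--Laszlo-type comparison morphism $\Phi$ --- is the framework of those references, and the map $\Phi$ you write down (reading off the transition function on the punctured formal disc at $x_0$ from the pair $(\kappa',\tau')$) is the correct one to study. Where your plan goes wrong is the tangent-space bookkeeping. You assert that the cokernel of $d\Phi$ is an infinite-dimensional space of ``global deformations of $\CP$ on $C$ away from all marked points''; but once both the trivialization $\kappa'$ away from the marked points and the formal trivialization $\tau'$ at $x_0$ are specified, Beauville--Laszlo gluing determines $\CP'$ uniquely, so there are no such residual deformations and this cokernel is zero. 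In fact $\Phi$ is formally smooth (any infinitesimal deformation of the arc lifts by re-gluing), and the finite-dimensional discrepancy $\widehat{\AA}^m_0$ sits in the \emph{kernel} of $d\Phi$: it comes from moving the auxiliary points $x_1,\dots,x_n$, which does not change the arc since $\kappa$ already extends across them. Your subsequent appeal to a ``uniqueness part of DGK'' to absorb an infinite-dimensional defect is therefore both unnecessary and not a theorem one can cite as a black box. With this correction the argument is essentially that of~\cite{BNS,bo}.
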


\subsection{Locality} We conclude the paper with yet one more conjecture which roughly speaking states that in the above setup the space $\calS_{\rho}(G(\sF))$ is local on $\oG(\sF)$.
\begin{conj}
There exists a $G(\sF)\times G(\sF)$-equivariant sheaf $\bfS_{\rho}$ of $\CC$-vector spaces on $\oG(\sF)$ such that
\begin{enumerate}
\item
$\bfS_{\rho}|_{G(\sF)}$ is the sheaf of locally constant functions on $G(\sF)$.
\item
The space $\calS_{\rho}(G(\sF))$ is equal to the space of global sections of $\bfS_{\rho}$ on $\oG(\sF)$ with compact support.
\end{enumerate}
\end{conj}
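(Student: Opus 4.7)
The plan is to construct $\bfS_\rho$ locally on $\oG(\sF)$ by combining the Drinfeld-Grinberg-Kazhdan description of formal neighborhoods in $\oG{}_\sO^0$ with the trace-of-Frobenius recipe that underlies Conjecture~\ref{semi}, and then to verify the two stated properties. First, for each $x = g(t)\in \oG(\sF)$ I would fix (a torsor of) local models via the isomorphism $(\widehat{\oG{}_\sO^0})_{g(t)}\cong \widehat{\AA}^\infty_0\times \hatZ_{z_0}$ with $Z$ of finite type; then equivariant perverse sheaves on $\oG{}_\sO^0$ near $x$ are identified with equivariant perverse sheaves on the formal neighborhood of $z_0$ in $Z$, and the trace of Frobenius on their stalks produces germs of functions at $x$. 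The presheaf sending an open $U\subset \oG(\sF)$ to the space of $\CC$-valued functions on $U\cap G(\sF)$ whose germ at each point of $U$ arises in this way from an intermediate extension $i_{!*}\CF$ (for $\CF$ an irreducible perverse sheaf on $G_\sF$ twisted by $\rho$ as in \S\ref{basic}) defines, after sheafification, the desired candidate $\bfS_\rho$; equivariance under left and right translation by $G(\sF)$ is automatic from the equivariance of the perverse-sheaf input.

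Property (1) follows because for $x\in G(\sF)$ the formal neighborhoods in $\oG{}_\sO^0$ and $G_\sF$ coincide, the finite-type model $Z$ may be taken smooth at $z_0$, and any perverse sheaf on a smooth scheme is locally constant, so the associated trace-of-Frobenius function is locally constant. For the inclusion $\calS_\rho(G(\sF))\hookrightarrow \Gamma_c(\oG(\sF), \bfS_\rho)$ in property (2), I would take Conjecture~\ref{semi}(2) as an input: it identifies $f_{\CF,\rho}$ on each connected component $G_n$ with the trace of Frobenius on the intermediate extension $i_{!*}\CF$ (up to an explicit scalar), thereby realizing each basic function as a global section of $\bfS_\rho$; compactness of its support in $\oG(\sF)$ then reduces to the fact that $\CF$ is supported on finitely many orbits of a fixed congruence subgroup of $G_\sF$, whose images in $\oG(\sF)$ have compact closure.

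The reverse inclusion $\Gamma_c(\oG(\sF),\bfS_\rho)\subset \calS_\rho(G(\sF))$ is where I expect the main difficulty to lie. A compactly supported section $s$ is by construction locally a basic function near every point, but one must assemble these local pieces into a single globally defined element of $\calS_\rho(G(\sF))$. My strategy would be to work one compact open $K\subset G(\sF)$ at a time: show that $\Gamma_c(\oG(\sF),\bfS_\rho)^{K\times K}$ is finitely generated as a module over the Hecke algebra $H(G,K)$, and match its generators with basic functions via the spectral description of Theorem~\ref{main-intro-exp} for $K=I$ or $K=G(\sO)$, extended inductively to general parahoric and congruence $K$. The genuinely hard step is controlling the compatibility of germs across the boundary $\partial\oG(\sF)=\oG(\sF)\setminus G(\sF)$: the Drinfeld-Grinberg-Kazhdan model $Z$ depends on the point, and the coherence between germs at different boundary points is governed not by the semigroup arc space itself but by the ramified Beilinson-Drinfeld Grassmannian of \S\ref{BD}, so proving that a compactly supported section arises from a single globally defined perverse sheaf on $G_\sF$ requires careful passage between these two geometric models. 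This is the step where new ideas are likely to be needed.
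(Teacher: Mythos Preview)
The statement you are attempting to prove is the final conjecture of the paper, stated in the ``Locality'' subsection of~\S\ref{semi-groups} immediately before the bibliography; the paper offers no proof and no proof sketch for it. There is therefore no argument in the paper to compare your proposal against.

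As to the proposal itself, it is a program rather than a proof, and you are candid about this. Two structural points deserve emphasis. First, your argument for the inclusion $\calS_\rho(G(\sF))\subset \Gamma_c(\oG(\sF),\bfS_\rho)$ takes Conjecture~\ref{semi}(2) as an input; that statement is itself open in the paper (the partial results quoted from~\cite{BNS} treat only the case where $\CF$ is the constant sheaf on $G_\sO$, and then only for $G$ a torus or $\rho$ irreducible), so even the direction you describe as routine is conditional on an unproven conjecture. Second, your strategy for the reverse inclusion requires extending the spectral description of Theorem~\ref{main-intro-exp} from $K\in\{I,G(\sO)\}$ to arbitrary parahoric and congruence subgroups, which is essentially the content of the open Conjecture~\ref{main-conj}(4). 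Thus the proposal reduces one open conjecture to two others together with the gluing problem across $\partial\oG(\sF)$ that you yourself flag as needing new ideas; this is a plausible roadmap but not a proof.
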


\end{document}